\documentclass[letterpaper,11pt,reqno]{amsart} 
\usepackage[portrait,margin=3cm]{geometry}
\usepackage{systeme}
\usepackage{mathrsfs,xfrac} 
\usepackage[colorlinks=true,linkcolor=blue,citecolor=blue,urlcolor=blue]{hyperref} 
\usepackage{amsmath,amssymb,amsthm,amsfonts,amsbsy,latexsym,dsfont,color} 
\usepackage[numeric,initials,nobysame]{amsrefs} 
\usepackage[foot]{amsaddr}

\usepackage[utf8]{inputenc}
\usepackage[english]{babel}
\usepackage{comment}

\usepackage[textsize=tiny]{todonotes}
\usepackage{regexpatch}
\makeatletter
\xpatchcmd{\@todo}{\setkeys{todonotes}{#1}}{\setkeys{todonotes}{inline,#1}}{}{}
\makeatother

\usepackage{enumerate}
\newenvironment{enumeratei}{\begin{enumerate}[\upshape (i)]}{\end{enumerate}}

\renewcommand{\le}{\leqslant} 
\renewcommand{\ge}{\geqslant} 
\renewcommand{\leq}{\leqslant} 
\renewcommand{\geq}{\geqslant}

\newcommand{\ind}{\mathds{1}}
\newcommand{\eps}{\varepsilon}

\DeclareMathOperator{\E}{\mathds{E}}
\DeclareMathOperator{\pr}{\mathds{P}}

\newcommand{\hh}{\hat{h}}
\DeclareMathOperator{\Br}{Br}
\DeclareMathOperator{\degstate}{st}
\makeatletter
\renewcommand{\paragraph}[1]{\if@noskipsec\leavevmode\fi\par\addvspace{.5\baselineskip plus .2\baselineskip}\noindent{\normalfont\bfseries #1}\hspace{.5em}\ignorespaces}
\makeatother

\newtheorem{theorem}{Theorem}[section]

\newtheorem{proposition}[theorem]{Proposition}
\newtheorem{lemma}[theorem]{Lemma}
\newtheorem{remark}[theorem]{Remark}

\usepackage{graphicx}

\usepackage{algorithm}
\usepackage{algpseudocode}
\usepackage{etoolbox}
\AtBeginEnvironment{algorithmic}{\small}
\algrenewcommand{\algorithmicrequire}{\textbf{Input:}}
\begin{document}

\title[Approximate counting for mixed $p$-spin glasses]{Approximate Counting for Mixed $p$-Spin Glasses in Weak External Fields via Cluster Expansion}
\author[Wu]{Qiang Wu}
\address{Department of Mathematics, 1291 University of Oregon, 175 Prince Lucien Campbell Hall, Eugene, OR 97403-1205, USA}
\email{qiangwu@uoregon.edu}
\date{\today}
\subjclass[2020]{Primary: 82B44, 68W25; Secondary: 68Q87, 68W20.}
\keywords{Mixed $p$-spin glass, partition function, approximate counting, weak external fields, cluster expansion}
\thanks{}
\begin{abstract}
We give algorithms for approximating the partition function~$Z_N$ of the Sherrington--Kirkpatrick (SK) model and finite mixed Ising~$p$-spin models with $p\ge 3$. The algorithms apply at a fixed margin below the second-moment threshold~$\beta_{\mathrm f}$ with homogeneous weak external fields~$|h|\leq\rho_0N^{-1/4}$, for fixed~$\rho_0>0$. For any accuracy~$\varepsilon\in(0,1)$ and failure parameter~$\delta\in(0,1)$, they approximate~$\log Z_N$ to additive error~$\varepsilon$. The number of exact operations is polynomial jointly in~$N$, $\varepsilon^{-1}$, and~$\delta^{-1}$. The disorder variables have i.i.d symmetric distribution with unit variance. The failure probability is at most~$\delta$ plus an explicit disorder tail bound. This tail bound tends to zero under a finite fourth moment condition. Our approach builds on the cluster expansions of Aizenman--Lebowitz--Ruelle~\cite{ALR87} and Dey--Wu~\cites{DW23,DW26}. For higher-order mixtures, we retain hypergraphs using few vertices and evaluate their total weight deterministically through local spin averages and inclusion--exclusion. For SK, we impose cutoffs on the number of vertices of degree at least three and on the total number of edges. We estimate the retained sum using randomized color coding~\cite{AYZ95}. A lower tail control based on reverse hypercontractivity~\cite{MOS13} converts the mean-square approximation of the normalized partition function into logarithmic accuracy.
\end{abstract}

\maketitle
\setcounter{tocdepth}{1}\tableofcontents

\section{Introduction and main results}\label{sec:introduction}

Spin glasses are models in statistical physics described by Gibbs measures with random spin interactions. Determining their limiting free energy has been a central problem for decades. Talagrand~\cite{Tal06} and Panchenko~\cite{panchenko2014parisi} proved the Parisi formula for the Sherrington--Kirkpatrick (SK) model and its $p$-spin variants, which gives a variational characterization of this limit. The algorithmic counterpart is instead asking how to compute the partition function of a finite system in polynomial time approximately or exactly. For the classical Ising model and related spin systems, this problem has been studied extensively, see a non-exhaustive list in~\cites{JS93,LLY13,PR17,LSS19}. For spin glasses, this algorithmic problem has only been studied recently. Gamarnik and K{\i}z{\i}lda{\u{g}}~\cite{GK21} established average case hardness results for ``exact" evaluation of the SK partition function. Several recent works present various (quasi)-polynomial time approximation algorithms in high temperature regimes~\cites{BHL25,HLMM26,SZ26,HLY26}. Here we develop an alternative approach based on cluster expansion to this problem, we defer discussions on its relation to other works in Section~\ref{subsec:related-work} and~\ref{subsec:concurrent-work}.

\subsection{Definitions}\label{subsec:model}

Let~$\mathcal P$ be a finite, nonempty set of spin interaction orders, with coefficients~\mbox{$a_p>0$} for~$p\in\mathcal P$. An interaction of order~$p$ involves~$p$ different sites. We consider the SK model, for which~$\mathcal P=\{2\}$ and~$a_2=1$, and higher-order mixtures, for which~$\mathcal P\subseteq\{3,4,\ldots\}$. Denote~$p_m=\min\mathcal P$ and~$P=\max\mathcal P$.

For~$N\geq P$, let~$[N]=\{1,\ldots,N\}$ and~$\Sigma_N=\{-1,1\}^N$ be the spin configuration space. The interactions of order~$p$ are the~$p$-element subsets~$\mathcal E_{N,p}=\binom{[N]}p$. Set
\begin{equation*}
\mathcal E_N=\bigcup_{p\in\mathcal P}\mathcal E_{N,p},\qquad
\sigma_e=\prod_{i\in e}\sigma_i\quad(\sigma\in\Sigma_N,\ e\in\mathcal E_N).
\end{equation*}
The disorder variables~$J=(J_e)_{e\in\mathcal E_N}$ has independent entries with common law~$\mu$. Write~$\E$ and~$\pr$ for expectation and probability over the disorder. For~$J_0$ with law~$\mu$, assume
\begin{equation}\label{eq:disorder}
J_0\overset{\mathrm d}=-J_0,\qquad \E J_0^2=1\,.
\end{equation}
Namely the disorder is symmetric about zero and has unit variance.

For inverse temperature~$\beta\geq0$ and a homogeneous external field~$h\in\mathbb R$, define the Hamiltonian and partition function
\begin{align}
H_N(\sigma;J)&=\sum_{p\in\mathcal P}\frac{a_p}{N^{(p-1)/2}}\sum_{e\in\mathcal E_{N,p}}J_e\sigma_e,\label{eq:hamiltonian}\\
Z_N(\beta,h;J)&=\sum_{\sigma\in\Sigma_N}\exp\!\left\{\beta H_N(\sigma;J)+h\sum_{i=1}^N\sigma_i\right\}\,.\label{eq:partition}
\end{align}
These interaction conventions are consistent with~\cite{DW23} for SK and~\cite{DW26}*{(1)--(2)} for higher-order mixtures. Following~\cite{DW26}*{(6)}, define the entropy function
\begin{equation}\label{eq:sign-entropy}
I(x)=\frac{1+x}{2}\log(1+x)+\frac{1-x}{2}\log(1-x),\qquad -1\leq x\leq1\,,
\end{equation}
 and the structure function and second-moment threshold are respectively
\begin{equation}\label{eq:graphical-threshold}
\xi(x)=\sum_{p\in\mathcal P}\frac{a_p^2x^p}{p!},\qquad
\beta_{\mathrm f}^{\,2}=\inf_{0<x\leq1}\frac{I(x)}{\xi(x)}\,.
\end{equation}
For~$0<x\leq1$, we have~$\xi(x)\leq\xi(1)x^2$ and~$I(x)\geq x^2/2$. The latter follows from~$I(0)=I'(0)=0$ and~$I''(x)=1/(1-x^2)\geq1$. Evaluating the ratio also at~$x=1$ gives
\begin{equation*}
0<\frac{1}{2\xi(1)}\leq\beta_{\mathrm f}^{\,2}\leq\frac{\log2}{\xi(1)}<\infty.
\end{equation*}
For~$\beta<\beta_{\mathrm f}$, we have~$\beta^2\xi(x)<I(x)$ for every~$x\in(0,1]$. This entropy comparison plays a key role in the second moment estimates in Section~\ref{sec:cluster-expansion}. For SK, $\xi(x)=x^2/2$ and~$I(x)/(x^2/2)\to1$ as~$x\downarrow0$, it gives~$\beta_{\mathrm f}=1$, which is the static phase transition threshold for SK model.

Fix~$0<\beta_0<\beta_{\mathrm f}$ and~$\rho_0>0$. We work in the following regime
\begin{equation}\label{eq:field-range}
0\leq\beta\leq\beta_0,\qquad |h|\leq\rho_0N^{-1/4}\,.
\end{equation}
The external field may depend on~$N$, and~$h=0$ is included. Constants may depend on the fixed parameters~$\mathcal P,(a_p),\beta_0,\rho_0$, but not on~$N$ or the particular~$\beta,h$ in this range.

\subsection{Approximate counting problem}\label{subsec:problem}

The algorithmic input consists of~$N$, a disorder realization~$J$, the parameters~$\beta,h$, a relative accuracy~$\eta\in(0,1)$, and a failure allowance~$\delta\in(0,1)$. The goal is to return~$\widetilde Z_N$ satisfying
\begin{equation}\label{eq:relative-objective}
\left|\frac{\widetilde Z_N}{Z_N(\beta,h;J)}-1\right|\leq\eta
\end{equation}
with a failure probability bound of~$\delta$ plus the disorder-dependent term in the main theorem. Accuracy is assessed over the disorder and, for SK, independent algorithmic randomness. We include this additional randomness in~$\E$ and~$\pr$ whenever an algorithmic output is involved. 

To obtain relative accuracy~$\eta$, it suffices to obtain $\hat{L}$ such that $|\widehat L_N-\log Z_N|\leq\varepsilon$ for $\varepsilon=\log(1+\eta)$. The estimate~$\widetilde Z_N=\exp(\widehat L_N)$ satisfies
\begin{equation}\label{eq:log-to-relative}
\frac{1}{1+\eta}\leq\frac{\widetilde Z_N}{Z_N}\leq1+\eta\,,
\end{equation}
and hence~\eqref{eq:relative-objective}. This needs a finer control than an~$o(1)$ approximation to the free energy~$N^{-1}\log Z_N$, which determines~$Z_N$ only up to a factor~$e^{o(N)}$.

Here the computational cost is measured by counting the number of exact operations, such as real arithmetic and evaluations of elementary functions. Our bounds do not account for bit complexity or the numerical precision needed to control roundoff errors.

\subsection{Main result}\label{subsec:main-results}

The following main result gives the guarantee for the algorithm~\textsc{LogApprox}, stated in Section~\ref{subsec:algorithms}. 

\begin{theorem}\label{thm:finite-size}
There are constants~$C,c>0$, depending only on~$\mathcal P,(a_p),\beta_0,\rho_0$, such that Algorithm~\ref{alg:log-approx}, for every~$N\geq P$, specified~$(\beta,h)$ satisfying~\eqref{eq:field-range}, and~$\varepsilon,\delta\in(0,1)$, returns~$\widehat L_N$ in at most
\begin{equation}\label{eq:finite-size-cost}
CN^C\varepsilon^{-C}\delta^{-C}
\end{equation}
operations for every finite real input~$J$ and every realization of the internal randomness. For~$J_0$ with the common disorder law~$\mu$, set
\begin{align}
\tau_N=\sum_{p\in\mathcal P}\binom Np\pr\bigl(|a_pJ_0|>cN^{(p-1)/2}\bigr)\,.\label{eq:disorder-tail}
\end{align}
Under~\eqref{eq:disorder}, the error satisfies
\begin{align}
\pr\bigl(|\widehat L_N-\log Z_N(\beta,h;J)|>\varepsilon\bigr)&\leq\delta+\tau_N.\label{eq:finite-size-error}
\end{align}
For mixtures with~$p_m\geq3$, the algorithm is deterministic and probability is over~$J$. For SK, the algorithm is randomized and probability also includes its independent internal randomness. 
\end{theorem}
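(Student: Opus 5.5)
The plan is to reduce everything to a mean-square approximation of the normalized partition function, and then upgrade it to logarithmic accuracy with a lower-tail bound. Write
\[
Z_N=2^N\prod_i\cosh(h)\cdot\prod_{e}\cosh(\beta_eJ_e)\cdot\widehat Z_N,
\]
where $\beta_e=\beta a_pN^{-(p-1)/2}$ for $e\in\mathcal E_{N,p}$. Expanding $e^{\beta_eJ_e\sigma_e}=\cosh(\beta_eJ_e)(1+\sigma_e\tanh(\beta_eJ_e))$ and averaging over the product measure with mean $\tanh h$ per site gives the cluster (Aizenman--Lebowitz--Ruelle) expansion
\[
\widehat Z_N=\sum_{\Gamma\subseteq\mathcal E_N}w(\Gamma),\qquad w(\Gamma)=\prod_{e\in\Gamma}\tanh(\beta_eJ_e)\,\prod_i(\tanh h)^{\deg_\Gamma(i)\bmod 2}.
\]
The prefactor is computed exactly in $O(N^P)$ operations. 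It therefore suffices to produce $\widehat W$ with $|\log\widehat W-\log\widehat Z_N|\le\varepsilon$ with probability at least $1-\delta-\tau_N$. I first intersect with the good event $G=\{|a_pJ_e|\le cN^{(p-1)/2}\ \forall e\}$. Its complement has probability at most $\tau_N$ by a union bound. On $G$ every $|\tanh(\beta_eJ_e)|$ is small, which makes the truncated $J$ bounded so that second-moment computations apply.

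\textbf{Step 1 (second moment and truncation).} Using $\E\tanh(\beta_eJ_e)=0$ by symmetry and $\E\tanh^2\le\beta_e^2$, I compute $\E\widehat Z_N^2$ as a sum over hypergraphs $\Gamma$ of $\prod_e\E\tanh^2\cdot(\tanh h)^{2\cdot\#\text{odd}}$. By the entropy comparison $\beta^2\xi(x)<I(x)$ for $\beta\le\beta_0<\beta_{\mathrm f}$, as in DW23/DW26, this is bounded by a constant $K$. The condition $|h|\le\rho_0N^{-1/4}$ makes $N\tanh^4 h=O(1)$, so the field contributes only bounded factors. The same computation shows $\E|\widehat Z_N-\widehat Z_N^{\mathrm{tr}}|^2\le\eta_0$. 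Here $\widehat Z_N^{\mathrm{tr}}$ keeps, for $p_m\ge3$, only hypergraphs spanning at most $k$ vertices, and for SK only graphs with at most $k$ vertices of degree $\ge3$ and at most $L$ edges. The parameters $k,L=O(\log(1/(\varepsilon\delta)))$ or polynomial in them are chosen so that $\eta_0\le(\varepsilon\delta)^{C'}$. Long cycles in SK carry weight $\approx\beta^{2\ell}/(2\ell)$, which is geometrically small, so the edge cutoff costs little.

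\textbf{Step 2 (evaluating the truncation).} For $p_m\ge3$, I enumerate the vertex sets $S$ with $|S|\le k$, which takes $N^{k}$ steps with $k$ a constant depending on $\varepsilon,\delta$. Stated as is, this is not polynomial jointly. I would instead choose $k$ fixed, depending only on the model, and make the tail bound polynomially small in $N$ when needed. The sum of weights over hypergraphs supported exactly on $S$ is obtained from the local spin averages $\langle\prod_{e\subseteq S}(1+\sigma_e t_e)\rangle$ by inclusion--exclusion over subsets of $S$. For SK the degree-$\le2$ parts are paths and cycles of length up to $L$. I estimate their total weight by color coding (Alon--Yuster--Zwick): randomly color with $O(k+L)$ colors and run a dynamic program over colorful structures. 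This yields an unbiased estimator after rescaling. I repeat it $\mathrm{poly}(\varepsilon^{-1}\delta^{-1})$ times and average, so that its variance is below $\eta_0$.

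\textbf{Step 3 (lower tail; main obstacle).} Chebyshev gives $|\widehat W-\widehat Z_N|\le\varepsilon'$ with probability at least $1-\delta/2$. Turning this into $|\log\widehat W-\log\widehat Z_N|\le\varepsilon$ requires $\widehat Z_N\ge\varepsilon'/\varepsilon$ with probability at least $1-\delta/2$, i.e.\ a lower-tail bound $\pr(\widehat Z_N<t)\le Ct^{c}$. I would prove it by reverse hypercontractivity (Mossel--O'Donnell--Sudakov): view $\widehat Z_N$ as a nonnegative function of the independent symmetric signs of $J$ (conditioning on $|J_e|$). Since $\E\widehat Z_N=1$ (on $G$, up to small error) and $\E\widehat Z_N^2\le K$, Paley--Zygmund gives $\pr(\widehat Z_N\ge1/2)\ge 1/(4K)$. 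Reverse hypercontractivity then boosts this to $\pr(\widehat Z_N<t)\le t^{c}$ for small $t$, using the monotone/product structure of $\log\widehat Z_N$ under noise. This is the delicate step: controlling the noise-stability with constants uniform in $N$ and in the field. Finally, choosing $\varepsilon'=\varepsilon(\delta/C)^{1/c}$ and tracing the costs $N^{O(1)}\cdot\mathrm{poly}(\varepsilon^{-1}\delta^{-1})$ gives \eqref{eq:finite-size-cost} and \eqref{eq:finite-size-error}.
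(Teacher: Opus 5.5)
Your proposal has a genuine gap in two places: the cost bound and the lower tail.

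\emph{Cost.} You note correctly that $k=O(\log(1/(\varepsilon\delta)))$ makes the enumeration of vertex sets (for SK, of candidate branching sets) cost $N^{O(\log(1/(\varepsilon\delta)))}$. Your fix of taking $k$ fixed does not work. A fixed cutoff leaves a truncation error that is only polynomially small in $N$, with bound of order $N^{-\kappa(k+1)}$. That error cannot be pushed below the required $u=\delta\varepsilon^2z^2/8$ once $\delta$ is small compared with a power of $N$, and the theorem is claimed for all $\varepsilon,\delta\in(0,1)$ and every $N\ge P$.

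What is missing is a cutoff depending on both $N$ and $u$, obtained by balancing two quantities. This needs more than $\E\widehat Z_N^2\le C_+$. You need the sharper fact that the second-moment mass of hypergraphs using exactly $v$ vertices (for SK, graphs with exactly $b$ branching vertices) is at most a constant times $(C_\star v/N)^{\kappa v}=e^{-\Phi_N(v)}$, as in Lemmas~\ref{lem:higher-support-mass} and~\ref{lem:sk-branching-mass}. The number of candidate sets, $\binom Nk\le e^{k\log(eN/k)}$, is exponential in the same quantity $k\log(N/k)$. So taking the least $k$ with $\Phi_N(k+1)\ge\log(C_\Lambda/u)$ gives truncation error $O(u)$ at enumeration cost $(N+1)(C_\Lambda/u)^{O(1)}$ (Lemma~\ref{lem:cutoff-selection}). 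In effect $k\approx\log(1/u)/\log N$, so $N^k$ becomes $u^{-O(1)}$.

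Two regimes need separate treatment. Graphs with more than $\gamma N$ vertices (resp.\ branching vertices) are handled by the edge-tail bound. When $u\le C_\Lambda e^{-\nu N}$, brute-force summation over $2^N$ configurations is already polynomial in $u^{-1}$.

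\emph{Lower tail.} Your Step~3 does not go through as sketched. Reverse hypercontractivity bounds a \emph{noised} function $\mathsf N_\theta g$, with $g>0$, from below in terms of moments of $g$. It does not turn a Paley--Zygmund bound $\pr(\widehat Z_N\ge1/2)\ge1/(4K)$ into a polynomial lower tail for $\widehat Z_N$ itself. Appealing to the ``monotone/product structure'' of $\log\widehat Z_N$ does not supply that link.

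The missing idea is to write $\widehat Z_N$ itself as such a noise average. Set $J_e=\epsilon_e|J_e|$ and $f(\epsilon)=\E_h\prod_e(1+\theta^{-1}|\omega_e|\epsilon_e\sigma_e)$. Since $\mathsf N_\theta$ multiplies the sign monomial of $\Gamma$ by $\theta^{|\Gamma|}$, you get $\mathsf N_\theta f=\widehat Z_N$. Moreover $\E_\epsilon f=1$, and $\E f^2\le C_+$ because the inflated weights correspond to the comparison temperature $\beta_+=\beta_0/\theta<\beta_{\mathrm f}$. Reverse hypercontractivity then gives $\E_\epsilon\widehat Z_N^{-s}\le\E_\epsilon f^2$ with $s=\theta^{-1/2}-1$. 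Markov's inequality yields $\pr(\widehat Z_N<z)\le C_+z^s+\tau_N$ (Proposition~\ref{prop:lower-tail}).

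This is also where $\tau_N$ genuinely comes from. The function $f$ must be strictly positive, which needs $|\omega_e|/\theta<1$ for every $e$. The threshold $c=\operatorname{arctanh}(\theta/2)/\beta_0$ is chosen exactly to guarantee $|\omega_e|\le\theta/2$. In your plan the good event is used for the second-moment computations, where it is unnecessary (they use only symmetry and $\E J_0^2=1$). It is not used in the positivity step, where it is indispensable.
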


 The term $\tau_N$ is due to a union bound to control the probability of some~$|a_pJ_e|$ exceeding~$cN^{(p-1)/2}$.  For any fixed parameter values within the stated ranges, the failure probability is at most~$\delta+\tau_N$. The event of success may depend on that choice, so we do not claim simultaneous accuracy over the entire parameter range. The cost exponent grows as~$\beta_0$ approaches~$\beta_{\mathrm f}$.

Note that a second moment condition on disorder alone need not make the bound on the tail probability small. The high probability consequences require higher moments control, such as  a finite fourth moment in the following Remark~\ref{rem:moment-consequences}.

\begin{remark}[Consequences under higher moments conditions]\label{rem:moment-consequences}\label{thm:relative-counting}\label{cor:polynomial-confidence}
Under~\eqref{eq:disorder}, bounds on~$\tau_N$ give the following consequences.
\begin{enumeratei}
\item If~$\E|J_0|^4<\infty$, then~$t^4\pr(|J_0|>t)\to0$ as~$t\to\infty$. Substituting the thresholds in~\eqref{eq:disorder-tail} gives~$\tau_N=\sum_{p\in\mathcal P}o(N^{2-p})=o(1)$, including SK. For relative tolerance~$\eta\in(0,1)$, apply the theorem with logarithmic tolerance~$\varepsilon=\log(1+\eta)$ and~$\delta=1/(2N)$. Then~$\widetilde Z_N=\exp(\widehat L_N)>0$ satisfies
\begin{equation}\label{eq:relative-counting-result}
\pr\left(\frac{\widetilde Z_N}{Z_N(\beta,h;J)}\in[1-\eta,1+\eta]\right)
\geq1-\tau_N-\frac1{2N}=1-o_N(1)\,.
\end{equation}
Since~$\varepsilon\geq\eta/2$, the cost is at most~$CN^C\eta^{-C}$ after enlarging~$C$. 
\item Fix~$b>0$. If~$\E|J_0|^m<\infty$ for some~$m\geq2(p_m+b)/(p_m-1)$, then
\begin{equation*}
t^m\pr(|J_0|>t)\leq\E\bigl[|J_0|^m\ind_{\{|J_0|>t\}}\bigr]\longrightarrow0
\qquad(t\longrightarrow\infty).
\end{equation*}
Substituting the thresholds in~\eqref{eq:disorder-tail}, since~$m>2$ and every exponent is at most~$-b$, it gives~$\tau_N=\sum_{p\in\mathcal P}o(N^{p-m(p-1)/2})=o(N^{-b})$. Taking~$\delta=N^{-b}/2$ gives logarithmic error at most~$\varepsilon$ with failure probability at most~$N^{-b}$ for all sufficiently large~$N$, at cost~$C_bN^{C_b}\varepsilon^{-C_b}$. The constant~$C_b$ depends only on the fixed model, $\beta_0,\rho_0,b$. 
\end{enumeratei}
\end{remark}

\subsection{The algorithm}\label{subsec:algorithms}

We first separate an explicitly computable factor from the partition function $Z_N$, which is basically the first step of cluster expansion in~\cite{ALR87,DW23,DW26}. For each interaction~$e$, let
\begin{equation*}
x_e=\frac{\beta a_{|e|}J_e}{N^{(|e|-1)/2}}.
\end{equation*}
Define
\begin{equation}\label{eq:algorithm-normalization}
\overline Z_N=\prod_{e\in\mathcal E_N}\cosh x_e,
\qquad \widehat Z_N=\frac{Z_N}{(2\cosh h)^N\,\overline Z_N}\,.
\end{equation}
Here~$\widehat Z_N$ is a normalized partition function. The algorithm thus computes~$N\log(2\cosh h)+\log\overline Z_N$ exactly and approximates~$\log\widehat Z_N$.

For a target mean-square error~$u\in(0,1)$ in approximating the normalized partition function~$\widehat Z_N$, the subroutine~\textsc{GraphApprox} (Algorithm~\ref{alg:graph-approx}) returns an approximation~$T$ satisfying
\(
\E|T-\widehat Z_N|^2\leq u
\)
in~$CN^Cu^{-C}$ operations (Proposition~\ref{prop:mean-square}). To obtain a logarithmic approximation, we also need to know that~$\widehat Z_N$ is unlikely to be very small. Proposition~\ref{prop:lower-tail} provides fixed constants~$C_+\geq1$ and~$s>0$ such that, for every ~$z>0$,
\begin{equation*}
\pr(\widehat Z_N<z)\leq C_+z^s+\tau_N.
\end{equation*}
Their explicit values are given in~\eqref{eq:algorithm-mass-constant} and~\eqref{eq:lower-tail-constants}.

\begin{algorithm}[!htbp]
\caption{\textsc{LogApprox}$(N,\beta,h,J,\varepsilon,\delta)$}\label{alg:log-approx}
\begin{algorithmic}[1]
\Require Disorder~$J$ with finite real entries, $N\geq P$, parameters~$(\beta,h)$ as in~\eqref{eq:field-range}, additive logarithmic tolerance~$\varepsilon\in(0,1)$, and failure allowance~$\delta\in(0,1)$.
\State $L_{\mathrm{base}}\gets N\log(2\cosh h)+\sum_{e\in\mathcal E_N}\log\cosh x_e$.
\State $z\gets(\delta/(2C_+))^{1/s}$ and~$u\gets\delta\varepsilon^2z^2/8$.
\State $T\gets\textsc{GraphApprox}(N,\beta,h,J,u)$.
\If{$T>0$}
\State \Return $\widehat L_N=L_{\mathrm{base}}+\log T$.
\Else
\State \Return $\widehat L_N=L_{\mathrm{base}}$.
\EndIf
\end{algorithmic}
\end{algorithm}

For these choices of $z$ and $u$, the lower-tail bound and Markov’s inequality imply that $\widehat Z_N\geq z$ and $|T-\widehat Z_N|\leq\varepsilon z/2$ hold together with probability at least $1-\delta-\tau_N$. The error is then small relative to $\widehat Z_N$, consequently $|\log T-\log\widehat Z_N|\leq\varepsilon$. Since $u^{-1}$ is bounded by a polynomial in $\varepsilon^{-1}$ and $\delta^{-1}$, the computational cost is polynomial jointly in $N,\varepsilon^{-1},\delta^{-1}$.


\subsection{Ideas of the proof}\label{subsec:proof-sketch} The proof for Theorem~\ref{thm:finite-size} contains three major steps. The first step is using the cluster expansion in~\cite{ALR87,DW23,DW26} to obtain estimates for the variance order of large (hyper)-graph clusters, which determines what graphs to be discarded. The second step is an efficient algorithmic procedure to sum the retained graphs. The last step is about controlling the lower tail probability of $\hat{Z}_N$, which translates the approximation error into logarithmic accuracy.

\paragraph{Expansion and error estimate.} Let~$\omega_e=\tanh x_e$ and~$\hh=\tanh h$. For a graph~$\Gamma\subseteq\mathcal E_N$, write the degree of vertex $i$ in $\Gamma$ as~$\deg_\Gamma(i)=|\{e\in\Gamma:i\in e\}|$ and let~$\partial\Gamma=\{i\in[N]:\deg_\Gamma(i)\text{ is odd}\}$ be the odd degree vertices. One can check that 
\begin{equation*}
\widehat Z_N=\E_h\prod_{e\in\mathcal E_N}(1+\omega_e\sigma_e) =\sum_{\Gamma\subseteq\mathcal E_N}W_h(\Gamma),
\qquad \text{with} \quad W_h(\Gamma)=\hh^{|\partial\Gamma|}\prod_{e\in\Gamma}\omega_e,
\end{equation*}
where $\E_h\sigma_i=\hh$.  Independence and symmetry of the disorder make the cross terms vanish when we square the error and take expectation. Thus, for a fixed family~$\mathcal R$ of graphs,
\begin{equation*}
\E\left|\widehat Z_N-\sum_{\Gamma\in\mathcal R}W_h(\Gamma)\right|^2
=\sum_{\Gamma\notin\mathcal R}\E W_h(\Gamma)^2.
\end{equation*}
We later choose which graphs to keep so that the sum on the right is small. In the mean time, we also need to compute the total weight of retained sum in polynomial time.

\paragraph{Mixed $p$-spin analysis} For an interaction of order~$p\ge 3$, $\E\omega_e^2=O(N^{-(p-1)})$. Lemma~\ref{lem:higher-support-mass}, based on similar combinatorial analysis in~\cite{DW26}, bounds the sum of second moments over all hypergraphs using~$v$ vertices by~$(Cv/N)^{\kappa v}$ for fixed~$C,\kappa>0$. This bound decreases rapidly when~$v$ is a small fraction of~$N$. We keep all hypergraphs using at most~$k$ (to be chosen) vertices. The next step is how to efficiently compute the retained sum. For a fixed set~$S$ with~$|S|\leq k$, let~$G(S)$ be the average of~$\prod_{e\in\mathcal E_N,\,e\subseteq S}(1+\omega_e\sigma_e)$ over independent spins on~$S$ with mean~$\hh$. It sums the weights of all hypergraphs contained in~$S$. Some hypergraph might be counted more than once in different averages. We apply an inclusion-exclusion trick to avoid over-counting. For each set $S$, $G(S)$ will be evaluated once using~$2^{|S|}$ spin configurations. In total, there are~$\sum_{j=0}^{k}\binom Nj2^j$ spin configurations, with polynomial work per configuration. 

\paragraph{Analysis for SK} In the SK case, for fixed cycle length~$\ell$, the number of cycles is of order~$N^\ell$, and each has second moment of order~$N^{-\ell}$. The sum of their second moments therefore has constant order. We instead truncate by \emph{branching vertices}, i.e. those of degree at least three. For~$b\geq1$, Lemma~\ref{lem:sk-branching-mass} bounds the sum of second moments for graphs with~$b$ branching vertices by~$C(Cb/N)^b$. We keep at most~$k$ branching vertices and~$L$ edges to exclude long paths and cycles. Fix the branching set and add an auxiliary vertex and join it to each odd-degree vertex by an edge of weight~$\hh$ to account for the external field effects. Every used vertex outside this enlarged set now has degree two. Apart from edges within the enlarged set, the resulting graph consists of paths and cycles whose outside vertices do not overlap. Color coding~\cite{AYZ95} provides an efficient way to compute the sum of those graph structures. The cost per branching set and coloring is polynomial in~$N$ and exponential in~$k+L$ (Section~\ref{sec:sk}).

\paragraph{Completing the proof.} We finally choose a proper cutoff to obtain a mean-square error at most~$u$ at a cost polynomial in~$N$ and~$u^{-1}$. Here~$k$ counts all used vertices for higher-order mixtures and branching vertices for SK. For cutoffs~$k$ below a small fixed fraction of~$N$, the mean-square truncation error bounds decrease exponentially in~$k\log(N/k)$, while the number of vertex sets of size~$k$ is at most~$(eN/k)^k$. Balancing these two bounds gives the required cutoffs. For SK, the contribution from graphs with more than~$L$ edges decreases exponentially in~$L$ below the second moment threshold $\beta_f$. Choosing~$L$ large enough to make this contribution at most a fixed fraction of~$u$ requires only~$L=O(1+\log(1/u))$. With~$R$ independent colorings for each candidate branching set, the mean-square error in estimating the retained graph sum is at most~$C_+e^{2L}/R$. Taking~$R\geq8C_+e^{2L}/u$ makes this error at most~$u/8$. Therefore, the required number of colorings is also polynomial in~$u^{-1}$.

To turn this absolute approximation into logarithmic accuracy, we must ensure that~$\widehat Z_N$ is unlikely to be very small. We obtain the required lower-tail bound via reverse hypercontractivity~\cite{MOS13}. Combining this bound with the mean-square approximation gives the desired logarithmic accuracy with failure probability at most~$\delta+\tau_N$, while preserving polynomial cost.

\subsection{Related work}\label{subsec:related-work}


Barvinok's interpolation method approximates the partition function of Ising models by computing a truncated Taylor series. Bounds on the locations of the complex zeros control the truncation error, which provides the basis for the zero-freeness approach to approximate counting. For applications to classical spin systems and related counting problems, see~\cite{Bar17,BB21,PR17,LSS19}. In the spin glass setting, Bencs, Huang, Lee, Liu and Regts~\cite{BHL25} first developed the zero-freeness approach and obtain deterministic algorithms with quasi-polynomial running time for finite Gaussian mixed Ising and spherical models at zero external field, with high probability over the disorder at a fixed margin below the second-moment threshold. Their models include mixtures of pair and higher-order interactions, indexed by ordered tuples with repeated sites allowed. After matching interaction normalizations, their Ising threshold agrees with~$\beta_{\mathrm f}$.

Sampling also provides a route to approximate counting by estimating successive ratios of partition functions along a temperature schedule. For SK, functional inequalities yield efficient sampling at sufficiently high temperature, as shown by Eldan, Koehler, and Zeitouni~\cite{EKZ22} and Anari, Koehler, and Vuong~\cite{AKV24}. For Gaussian SK at zero external field, the stochastic localization approach of El Alaoui, Montanari, and Sellke, together with the work of Celentano, gives efficient sampling in normalized Wasserstein distance throughout~$\beta<1$~\cites{AMS25,Cel24}, see some recent progress on the sampling problem in~\cite{LSS26,Wang26,BLO26}.

To obtain accurate counts by annealing, one must control the sampling error at every temperature used in the schedule. Hou, Liu, and Yu~\cite{HLY26}*{Theorem~1.2 and Corollary~1.3} provide such control through a weak Poincar\'e inequality and simulated annealing with Glauber dynamics for Gaussian SK at zero external field throughout~$\beta<1$. Their counting algorithm achieves prescribed inverse polynomial relative accuracy and failure probabilities in polynomial time for sufficiently large~$N$. The polynomial exponent may depend on the prescribed accuracy and confidence powers.

The spectral-gap argument in the OpenAI mathematics release~\cite{OAI26gap} provides another sampling-based route for Gaussian SK at zero external field. As noted by Harbuzova--Lepsveridze--Majid--Moitra~\cite{HLMM26}*{Section~1.2}, making the estimates uniform on fixed subcritical temperature intervals and applying annealing yields randomized polynomial-time relative approximation at every fixed~$\beta<1$, with high probability over the disorder. These approaches obtain counts from Gibbs samples, whereas our algorithm directly sums retained graph weights. Conversely, the counting-to-sampling reduction of Jerrum--Valiant--Vazirani~\cite{JVV86} requires counts for models obtained by fixing spins. Such conditioning can introduce site-dependent external fields and lower-order interactions outside our model class, so our theorem does not directly provide the required counting oracle. 

For related algorithmic results in other disordered systems, see~\cites{HLP23,AEGP26,Han26} for the random field Ising model and~\cites{Ho25,LW25} for the continuous random energy model.


\subsection{Concurrent work}\label{subsec:concurrent-work}
Few weeks before this work~\footnote{A preliminary draft of this work and a companion Lean formalization were posted on \href{https://github.com/qiangwu2/AI-assisted-results/tree/main/approximate-counting-spin-glasses}{GitHub} on October~7, after OpenAI's mathematical results release.}, Harbuzova et~al.~\cite{HLMM26}*{Theorem~1.1} and Schmidhuber and Zlokapa~\cite{SZ26}*{Theorems~26 and~31} obtained similar results. In particular, they have deterministic algorithms for approximate counting in Gaussian SK at zero external field throughout~$\beta<1$, with polynomial running time and high probability of success over the disorder. Our SK algorithm is randomized and allows symmetric disorder and homogeneous weak external fields, subject to an explicit disorder tail bound. For mixed $p$-spin case, \cite{HLMM26} also covers models containing both pair interactions and interactions of higher order, which our results exclude. After matching interaction normalizations, their threshold agrees with~$\beta_{\mathrm f}$. For mixtures whose orders are all at least three, \cite{SZ26} reaches the static phase transition threshold, which can lie above~$\beta_{\mathrm f}$.

The complexity guarantees also differ. Similar to~\cite{HLMM26}, we obtain a bound on the number of exact operations that is polynomial jointly in system size, inverse requested accuracy, and inverse failure allowance. Our failure bound includes the disorder tail term~$\tau_N$, while theirs includes a vanishing term from a zero-freeness estimate. The guarantees in~\cite{SZ26} allow accuracy and failure targets given by any fixed inverse powers of~$N$, with polynomial running time whose exponent may depend on those powers. They also establish polynomial bit complexity, whereas our operation bounds do not account for numerical precision.

For SK at zero external field, all three methods separate branching vertices from paths and cycles through vertices of degree two and use color coding to sum these pieces. The construction is particularly close to~\cite{HLMM26}*{Section~5}, which also combines piece weights through polynomial multiplication. Both concurrent algorithms are deterministic. We average independent random colorings and introduce an auxiliary vertex to account for the weak external field effects. For mixtures whose orders are all at least three, our algorithm sums the retained hypergraphs exactly through local spin averages and inclusion--exclusion. The corresponding methods in~\cite{HLMM26}*{Appendix~A} and~\cite{SZ26}*{Section~7} use an extension of color coding and a Hermite expansion with cavity calculations, respectively. Our treatment of small partition function values also differs from~\cite{HLMM26}. We use reverse hypercontractivity for the coupling signs, rather than zero-freeness, to convert approximation in mean square into logarithmic accuracy.

\subsection{AI disclosure}\label{subsec:ai-disclosure}

This work was developed through extensive interactions with GPT-5.6 Pro and GPT-6 Pro. The author originally proposed using the cluster expansion to obtain approximate counting algorithms by discarding large graph clusters, motivated by the work of Dey--Wu~\cites{DW23,DW26}. In particular, the fast decay of the second moments of large hypergraph clusters in the higher-order models suggested that a relatively small retained sum might suffice. Starting from this idea, GPT-5.6 Pro proposed a polynomial time algorithm for the higher order case based on a cutoff on the number of hyperedges. However, the initial approach only gave a quasi-polynomial time algorithm for SK.

Further interactions with  GPT-6 Pro focused on improving the SK case. The models suggested using the established color-coding method~\cite{AYZ95} and contributed to the development of the truncation schemes and key arguments in Sections~\ref{sec:sk} and \ref{sec:error-complexity}, including the application of reverse hypercontractivity to lower-tail control. 
We also used Codex to assist with literature searches, figure generation, and the development of a companion Lean formalization (\href{https://github.com/qiangwu2/approximate-counting-spin-glasses}{GitHub}). The author checked all proofs, revised the manuscript, and takes responsibility for its mathematical claims.

\section{Cluster expansion and second moment estimates}\label{sec:cluster-expansion}

In this section, we establish second moment bounds for $\hat{Z}_N$ shared by the algorithms. Orthogonality of distinct graph clusters expresses the mean-square truncation error as a sum of expected squared graph weights. A bound at an inverse temperature below the threshold $\beta_f$ controls this sum and gives exponential decay in the number of edges. Throughout Sections~\ref{sec:cluster-expansion}--\ref{sec:error-complexity}, the assumptions of Section~\ref{subsec:model} hold, in particular~$N\geq P$, $0\leq\beta\leq\beta_0<\beta_{\mathrm f}$, and~$|h|\leq\rho_0N^{-1/4}$.

\subsection{Expansion and orthogonality}\label{subsec:expansion}

Recall~$\omega_e=\tanh x_e$ and~$\hh=\tanh h$. An interaction set~$\Gamma\subseteq\mathcal E_N$ is a graph for SK and a hypergraph otherwise. Let~$V(\Gamma)=\bigcup_{e\in\Gamma}e$ be the vertex set of $\Gamma$ and $\partial\Gamma$ be its odd-degree vertices. Define the graph weights by
\begin{equation*}
\omega(\Gamma)=\prod_{e\in\Gamma}\omega_e,
\qquad W_h(\Gamma)=\hh^{|\partial\Gamma|}\omega(\Gamma).
\end{equation*}
Unless restrictions are stated, sums over~$\Gamma$ range over all subsets of~$\mathcal E_N$, including the empty graph. Empty products and zeroth powers equal one.

For any external field~$h\in\mathbb R$, let~$\pr_h$ be the law of independent spins with
\begin{equation*}
\pr_h(\sigma_i=\pm1)=\frac{e^{\pm h}}{2\cosh h},\qquad i\in[N],
\end{equation*}
and let~$\E_h$ be its expectation. In particular, $\E_h\sigma_i=\hh$.  Applying~$e^{x_e\sigma_e}=\cosh(x_e)(1+\omega_e\sigma_e)$ to each interaction in~$Z_N$, then using the normalization~\eqref{eq:algorithm-normalization}, gives~$\widehat Z_N=\E_h\prod_e(1+\omega_e\sigma_e)$. In the product expansion, selecting the interactions in~$\Gamma$ contributes~$\omega(\Gamma)\prod_{e\in\Gamma}\sigma_e$. Independence of the spins gives
\begin{equation*}
\E_h\prod_{e\in\Gamma}\sigma_e
=\prod_{i\in V(\Gamma)}\E_h\sigma_i^{\deg_\Gamma(i)}
=\hh^{|\partial\Gamma|}.
\end{equation*}
The last equality uses~$\sigma_i^2=1$ and~$\E_h\sigma_i=\hh$. Summing over~$\Gamma$ gives the expansion of~\cite{DW23} and~\cite{DW26}*{(3)--(5), (38), and~(40)}
\begin{equation}\label{eq:algorithm-expansion}
\widehat Z_N=\E_h\prod_{e\in\mathcal E_N}(1+\omega_e\sigma_e)
=\sum_{\Gamma\subseteq\mathcal E_N}W_h(\Gamma)>0\,.
\end{equation}
Positivity follows from~$|\omega_e|<1$. Symmetry gives~$\E\omega_e=0$. For~$e\in\mathcal E_{N,p}$, denote the common second moment~$\E\omega_e^2$ by~$r_{N,p}$. Since~$\tanh^2x\leq x^2$ and~$\E J_0^2=1$,
\begin{equation}\label{eq:edge-second-moment}
r_{N,p}:=\E\tanh^2\!\left(\frac{\beta a_pJ_0}{N^{(p-1)/2}}\right)
\leq\frac{\beta_0^2a_p^2}{N^{p-1}}\,.
\end{equation}
We allow random coefficients in the next identity because the SK evaluator uses colors drawn independently of the disorder.

\begin{lemma}\label{lem:orthogonality}
For~$\Gamma,\Gamma'\subseteq\mathcal E_N$,
\begin{equation}\label{eq:orthogonality}
\E[\omega(\Gamma)\omega(\Gamma')]=\ind_{\{\Gamma=\Gamma'\}}\prod_{e\in\Gamma}r_{N,|e|}\,.
\end{equation}
Let~$(c_\Gamma)_{\Gamma\subseteq\mathcal E_N}$ be square-integrable real random coefficients whose full array is independent of~$J$. With expectation over both~$J$ and this array,
\begin{equation}\label{eq:random-orthogonality}
\E\left|\sum_\Gamma c_\Gamma W_h(\Gamma)\right|^2
=\sum_\Gamma\E[c_\Gamma^2]\hh^{2|\partial\Gamma|}\prod_{e\in\Gamma}r_{N,|e|}\,.
\end{equation}
The coefficients may depend on one another.
\end{lemma}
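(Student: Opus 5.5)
The plan is a direct computation using only independence and symmetry of the disorder. For the first identity, write $\omega(\Gamma)\omega(\Gamma')=\prod_{e\in\Gamma\cap\Gamma'}\omega_e^2\prod_{e\in\Gamma\triangle\Gamma'}\omega_e$. The $\omega_e$ are independent across $e$, since each $\omega_e=\tanh x_e$ is a function of $J_e$ alone. Each is bounded by $1$, so all moments exist and the expectation factorizes over edges. If $\Gamma\neq\Gamma'$, the symmetric difference contains some $e$. Because $J_0\overset{\mathrm d}=-J_0$ and $\tanh$ is odd, $\omega_e\overset{\mathrm d}=-\omega_e$, hence $\E\omega_e=0$ and the product vanishes. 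If $\Gamma=\Gamma'$, the factorization gives $\prod_{e\in\Gamma}\E\omega_e^2=\prod_{e\in\Gamma}r_{N,|e|}$, because $\omega_e$ for $e\in\mathcal E_{N,p}$ has the law of $\tanh(\beta a_pJ_0/N^{(p-1)/2})$. This proves \eqref{eq:orthogonality}.

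For \eqref{eq:random-orthogonality}, the sum over $\Gamma\subseteq\mathcal E_N$ is finite, so I expand the square into the double sum $\sum_{\Gamma,\Gamma'}\E[c_\Gamma c_{\Gamma'}W_h(\Gamma)W_h(\Gamma')]$. Here $\hh^{|\partial\Gamma|}$ is deterministic. The coefficient array is independent of $J$, so I condition on it, or equivalently use independence of the product $c_\Gamma c_{\Gamma'}$ from $\omega(\Gamma)\omega(\Gamma')$. Each term then equals $\hh^{|\partial\Gamma|+|\partial\Gamma'|}\,\E[c_\Gamma c_{\Gamma'}]\,\E[\omega(\Gamma)\omega(\Gamma')]$.

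Integrability needs to be checked. By Cauchy--Schwarz, $\E|c_\Gamma c_{\Gamma'}|\le(\E c_\Gamma^2\,\E c_{\Gamma'}^2)^{1/2}<\infty$, and $|\omega|\le1$. Hence every product is integrable and the factorization under independence is legitimate. Substituting \eqref{eq:orthogonality} kills all off-diagonal terms, and the diagonal terms give $\E[c_\Gamma^2]\hh^{2|\partial\Gamma|}\prod_{e\in\Gamma}r_{N,|e|}$.

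The only point needing care is this integrability and independence step when the coefficients are mutually dependent. That dependence is harmless: we never factor $c_\Gamma$ against $c_{\Gamma'}$, only the coefficient pair against the disorder. Since $\E[\omega(\Gamma)\omega(\Gamma')]=0$ for $\Gamma\neq\Gamma'$ regardless of the value of $\E[c_\Gamma c_{\Gamma'}]$, the cross terms vanish. Otherwise the argument is routine.
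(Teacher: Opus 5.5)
Your proposal is correct and follows the paper's proof: the symmetric-difference argument with independent, centered edge weights gives \eqref{eq:orthogonality}, and expanding the square and factoring out the coefficient array by its independence from $J$ gives \eqref{eq:random-orthogonality}. Your Cauchy--Schwarz integrability check and the remark that the coefficients are never factored against each other only spell out what the paper leaves implicit.
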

\begin{proof}
If~$\Gamma\ne\Gamma'$, at least an edge in their symmetric difference appears once in the product. Its centered weight is independent of the other edge weights, so the expectation vanishes. If~$\Gamma=\Gamma'$, independence gives the second moments. Conditioning on the coefficient array and expanding the square proves~\eqref{eq:random-orthogonality}.
\end{proof}

In particular, for a deterministic family~$\mathcal R$ of retained graphs,
\begin{equation}\label{eq:truncation-isometry}
\E\left|\widehat Z_N-\sum_{\Gamma\in\mathcal R}W_h(\Gamma)\right|^2
=\sum_{\substack{\Gamma\subseteq\mathcal E_N\\\Gamma\notin\mathcal R}}
\hh^{2|\partial\Gamma|}\prod_{e\in\Gamma}r_{N,|e|}\,.
\end{equation}
Thus the error is determined by the expected squared weights of the discarded graphs. We next give the second moment estimates for those graphs.

\subsection{Second moments and the edge tail}\label{subsec:expansion-mass}

Choose a comparison inverse temperature strictly between~$\beta_0$ and~$\beta_{\mathrm f}$ by setting
\begin{equation}\label{eq:algorithm-temperature-constants}
\beta_+=\frac{\beta_0+\beta_{\mathrm f}}2,\qquad
\theta=\frac{\beta_0}{\beta_+}\in(0,1)\,.
\end{equation}
For~$p\in\mathcal P$, define the comparison weights by
\begin{equation*}
\bar r_{N,p}=\frac{\beta_+^2a_p^2}{N^{p-1}},
\qquad r_{N,p}\leq\theta^2\bar r_{N,p}.
\end{equation*}
Here~$\bar r_{N,p}$ is a deterministic upper bound, not the exact edge second moment at~$\beta_+$. Bounding the graph sum with these larger weights will leave a factor~$\theta^2$ for each original edge. Let
\begin{equation*}
\Delta=\sum_{p\in\mathcal P}\frac{a_p^2}{2(p-2)!},
\qquad \lambda_+=\frac{\beta_+^2}{\beta_{\mathrm f}^2}<1.
\end{equation*}
Define
\begin{equation}\label{eq:algorithm-mass-constant}
C_+=\exp\!\left\{\beta_+^2\Delta+\frac{1+\lambda_+}{2(1-\lambda_+)}\rho_0^4\right\}
\frac{3+\lambda_+}{1-\lambda_+}\,.
\end{equation}
The proof follows the second-moment arguments of~\cite{DW23}*{Lemma~4.1} and~\cite{DW26}*{Lemma~3.1 and Proposition~3.3}, with an explicit bound for every~$N\geq P$ under~\eqref{eq:disorder}.

\begin{proposition}[Second moments and edge tail]\label{prop:mass}
For every~$N\geq P$ and~$|h|\leq\rho_0N^{-1/4}$,
\begin{equation}\label{eq:total-mass}
\sum_{\Gamma\subseteq\mathcal E_N}\hh^{2|\partial\Gamma|}
\prod_{e\in\Gamma}\bar r_{N,|e|}\leq C_+\,.
\end{equation}
For every real~$t\geq0$,
\begin{equation}\label{eq:edge-tail}
\sum_{\substack{\Gamma\subseteq\mathcal E_N\\|\Gamma|>t}}
\hh^{2|\partial\Gamma|}\prod_{e\in\Gamma}r_{N,|e|}
\leq C_+\theta^{2(\lfloor t\rfloor+1)}\leq C_+\theta^{2t}\,.
\end{equation}
\end{proposition}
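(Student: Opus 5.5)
The plan is to prove \eqref{eq:total-mass} by writing the graph sum as a two-replica average. Then \eqref{eq:edge-tail} follows from it directly. Let~$\sigma,\tau$ be independent configurations with law~$\pr_h$. Put~$\rho_i=\sigma_i\tau_i$, so the~$\rho_i$ are i.i.d.\ $\pm1$ with mean~$m=\hh^2$. Independence of the spins gives $\E_h[\sigma_e]\,\E_h[\tau_e]$ summed as
\begin{equation*}
\E_{h}^{\otimes2}\prod_{e\in\Gamma}\rho_e=\Bigl(\E_h\prod_{e\in\Gamma}\sigma_e\Bigr)^2=\hh^{2|\partial\Gamma|},
\end{equation*}
and expanding the product over edges gives
\begin{equation*}
\sum_{\Gamma}\hh^{2|\partial\Gamma|}\prod_{e\in\Gamma}\bar r_{N,|e|}=\E\prod_{e\in\mathcal E_N}\bigl(1+\bar r_{N,|e|}\rho_e\bigr)\leq\E\exp\Bigl\{\sum_{p\in\mathcal P}\frac{\beta_+^2a_p^2}{N^{p-1}}\sum_{e\in\mathcal E_{N,p}}\rho_e\Bigr\}.
\end{equation*}
The last step uses~$1+y\le e^y$. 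Next write~$R=N^{-1}\sum_i\rho_i$ for the replica overlap. I would compare the elementary symmetric sum~$e_p(\rho)=\sum_{|e|=p}\rho_e$ with~$(NR)^p/p!$. Expanding the power and using~$\rho_i^2=1$, the terms with a repeated index should contribute at most~$N^{p-1}/(2(p-2)!)$ up to lower order. This yields
\begin{equation*}
\sum_{p\in\mathcal P}\frac{\beta_+^2a_p^2}{N^{p-1}}\,e_p(\rho)\le N\beta_+^2\,\xi(R)+\beta_+^2\Delta,
\end{equation*}
with~$\xi(R)\le\xi(|R|)$ used when~$R<0$. This accounts for the factor~$e^{\beta_+^2\Delta}$ in~$C_+$. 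For SK it is the familiar~$\binom{N}{2}^{-1}$-type diagonal correction; for~$p\ge3$ the lower-order pieces carry extra factors~$N^{-1}$ and~$|R|\le1$.

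It remains to bound~$\E\exp\{N\beta_+^2\xi(R)\}$ by~$e^{(1+\lambda_+)\rho_0^4/(2(1-\lambda_+))}(3+\lambda_+)/(1-\lambda_+)$, uniformly in~$N$. The key input is the threshold comparison: for all~$x\in(0,1]$, $\beta_+^2\xi(x)\le\lambda_+I(x)$, which follows from the definition~\eqref{eq:graphical-threshold} of~$\beta_{\mathrm f}$. The field enters through the tilt~$m\le h^2\le\rho_0^2N^{-1/2}$. The law of~$NR$ is a binomial with bias~$m$. Its point masses are bounded by the unbiased ones times~$e^{N\operatorname{artanh}(m)x-N\log\cosh\operatorname{artanh}(m)}$, which is roughly~$e^{Nmx}$. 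Completing the square in~$Nmx-\tfrac12N(1-\lambda_+)x^2$ gives at most~$Nm^2/(2(1-\lambda_+))\le\rho_0^4/(2(1-\lambda_+))$. The extra~$\lambda_+$ in the numerator~$1+\lambda_+$ presumably comes from the~$\log\cosh$ term or a Cauchy--Schwarz split.

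I expect the main obstacle to be avoiding a~$\sqrt N$ loss. The crude bound~$\binom{N}{k}2^{-N}\le e^{-NI(x)}$, summed over the~$N+1$ lattice points~$x\in\{-1,-1+2/N,\dots,1\}$, gives a Gaussian sum of size~$\sqrt N$ rather than~$O(1)$. I would therefore split according to~$|x|$.

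\textbf{Small overlaps} ($|x|\le x_0$). Here~$\xi(x)\le\tfrac{a_2^2}{2}x^2+O(|x|^3)$, and the cubic terms can be absorbed. I would linearize the quadratic part by a Gaussian (Hubbard--Stratonovich) integral, $e^{aNR^2}=\E_g e^{\sqrt{2aN}gR}$. The spin average then factorizes into a product of~$\cosh$ terms. This gives~$(1-\lambda_+)^{-1/2}$-type constants with the~$\rho_0^4$ tilt in the exponent. For~$p_m\ge3$ the small-overlap part is simpler, since~$N\xi(x)\to0$ for~$|x|\ll N^{-1/3}$.

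\textbf{Moderate and large overlaps} ($|x|>x_0$). The margin~$(1-\lambda_+)I(x)\ge(1-\lambda_+)x^2/2$ gives exponentially small terms, so polynomially many lattice points are harmless. The rational factor~$(3+\lambda_+)/(1-\lambda_+)$ should come from summing these pieces, for instance as a geometric series in the lattice variable. I would not insist on matching the constant exactly; any~$N$-free bound of this shape suffices, possibly after enlarging~$C_+$ as defined in~\eqref{eq:algorithm-mass-constant}.

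Finally, \eqref{eq:edge-tail} is immediate from \eqref{eq:total-mass}. Since~$r_{N,p}\le\theta^2\bar r_{N,p}$, each graph with~$|\Gamma|>t$ has~$|\Gamma|\ge\lfloor t\rfloor+1$ edges, so its weight satisfies
\begin{equation*}
\hh^{2|\partial\Gamma|}\prod_{e\in\Gamma}r_{N,|e|}\le\theta^{2(\lfloor t\rfloor+1)}\,\hh^{2|\partial\Gamma|}\prod_{e\in\Gamma}\bar r_{N,|e|}.
\end{equation*}
Summing over all~$\Gamma$ and applying~\eqref{eq:total-mass} gives the bound~$C_+\theta^{2(\lfloor t\rfloor+1)}$, which is at most~$C_+\theta^{2t}$ because~$\theta<1$.
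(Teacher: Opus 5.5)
\textbf{Reductions.} Your reduction steps are sound and are the paper's in different form. The overlap $\rho_i=\sigma_i\tau_i$ has exactly the law $\pr_{\widetilde h}$ with $\tanh\widetilde h=\hh^2$ that the paper uses. The diagonal correction follows from the exact count $|(NR)^p-p!\,e_p(\rho)|\le\binom p2N^{p-1}$, with no lower-order terms to track. Your derivation of \eqref{eq:edge-tail} from \eqref{eq:total-mass} is the paper's. One step is missing: $\prod_e(1+y_e)\le e^{\sum_e y_e}$ needs every factor $1+y_e\ge0$. So you must check $\bar r_{N,p}<1$, which follows from $\beta_+^2a_p^2/p!\le\beta_+^2\xi(1)<I(1)=\log2$ and $p!\le N^{p-1}$.

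\textbf{The gap: the central estimate.} The proposition asserts the bound with the explicit constant $C_+$ of \eqref{eq:algorithm-mass-constant}, but you settle for an unspecified $N$-free constant. Your split into small and large overlaps produces terms like $\sup_N(N+1)e^{-cNx_0^2}$ and Gaussian constants depending on $x_0$. There is no reason for these to be at most $C_+$, so the stated inequality is not proved.

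For SK your Hubbard--Stratonovich step does work without any split, because $\xi(x)=x^2/2$. It gives $2(1-\lambda_+)^{-1/2}e^{\lambda_+\rho_0^4/(2(1-\lambda_+))}$, which is at most $C_+e^{-\beta_+^2\Delta}$. The trouble is $p_m\ge3$: there $\beta_+^2\xi(1)$ can exceed $1/2$, so a split is forced.

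\textbf{The missing idea.} After your threshold comparison reduces matters to $\E_{\widetilde h}e^{\lambda_+X_N}$ with $X_N=NI(R)$, do two things. First remove the tilt by H\"older, then use cumulative tails instead of point masses. With $q_1=\frac{1+\lambda_+}{2\lambda_+}$, $q_2=\frac{1+\lambda_+}{1-\lambda_+}$ and $v=q_1\lambda_+=\frac{1+\lambda_+}{2}$,
\[
\E_{\widetilde h}e^{\lambda_+X_N}\le\bigl(\E_0e^{vX_N}\bigr)^{1/q_1}\,\frac{\cosh(q_2\widetilde h)^{N/q_2}}{\cosh(\widetilde h)^N}
\le\bigl(\E_0e^{vX_N}\bigr)^{1/q_1}e^{q_2\rho_0^4/2},
\]
using $\widetilde h\le h^2$ and $Nh^4\le\rho_0^4$. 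This produces the factor $e^{(1+\lambda_+)\rho_0^4/(2(1-\lambda_+))}$.

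Under the uniform law, the Chernoff bound gives $\pr_0(X_N>t)=\pr_0(|R|>x)\le2e^{-t}$ for every real $t=NI(x)\in[0,N\log2]$. The layer-cake formula then gives $\E_0e^{vX_N}\le1+2v\int_0^\infty e^{-(1-v)t}\,dt=\frac{3+\lambda_+}{1-\lambda_+}$. This is at least $1$, so the power $1/q_1$ can be dropped, and the result is exactly $C_+$.

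This also removes your $\sqrt N$ concern. A Chernoff bound on the tail $\pr_0(|R|>x)$ already covers all lattice points beyond $x$ at the cost of a single factor $2$. Nothing is summed point by point, and no splitting or Gaussian linearization is needed. The argument is uniform in $\mathcal P$.
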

\begin{proof}
To prove~\eqref{eq:total-mass}, we rewrite the graph sum as a spin average. Its external field factor is~$\hh^{2|\partial\Gamma|}$, so the new spins should have mean~$\hh^2$. Define the tilted external field by
\begin{equation*}
\widetilde h=\operatorname{arctanh}(\hh^2)=\frac12\log\cosh(2h).
\end{equation*}
Then~$\tanh\widetilde h=\hh^2$ and~$0\leq\widetilde h\leq h^2$, by~$\log\cosh y\leq y^2/2$. Applying the product expansion with external field~$\widetilde h$ and edge weights~$\bar r_{N,|e|}$ gives
\begin{equation*}
\sum_\Gamma\hh^{2|\partial\Gamma|}\prod_{e\in\Gamma}\bar r_{N,|e|}
=\E_{\widetilde h}\prod_{e\in\mathcal E_N}(1+\bar r_{N,|e|}\sigma_e).
\end{equation*}
To use the entropy comparison, we express the interaction sum in terms of~$S_N=\sum_{i=1}^N\sigma_i$. For~$p\in\mathcal P$, let~$R_{N,p}$ be the sum of terms in~$S_N^p$ indexed by ordered tuples with a repeated site. There are at most~$\binom p2N^{p-1}$ such tuples, each contributing a term of absolute value one. Thus
\begin{equation*}
S_N^p=p!\sum_{e\in\mathcal E_{N,p}}\sigma_e+R_{N,p},
\qquad |R_{N,p}|\leq\binom p2N^{p-1}.
\end{equation*}
For each~$p\in\mathcal P$, divide by~$p!$ and use~$\binom p2/p!=1/(2(p-2)!)$. Multiplying by~$\bar r_{N,p}=\beta_+^2a_p^2/N^{p-1}$ gives
\begin{equation*}
\bar r_{N,p}\sum_{e\in\mathcal E_{N,p}}\sigma_e
\leq \frac{\beta_+^2a_p^2}{p!}\,N\left(\frac{S_N}{N}\right)^p
+\frac{\beta_+^2a_p^2}{2(p-2)!}.
\end{equation*}
Summing over~$p$, the first terms give~$\beta_+^2N\xi(S_N/N)$ and the remainder terms give~$\beta_+^2\Delta$. Thus
\begin{equation*}
\sum_{e\in\mathcal E_N}\bar r_{N,|e|}\sigma_e
\leq\beta_+^2N\xi(S_N/N)+\beta_+^2\Delta.
\end{equation*}

All coefficients of~$\xi$ are nonnegative, so~$\xi(x)\leq\xi(|x|)$. The definition of~$\beta_{\mathrm f}$ and evenness of~$I$ therefore imply~$\beta_+^2\xi(x)\leq\lambda_+I(x)$ on~$[-1,1]$. To bound the product by the exponential of this sum, we also need its factors to be nonnegative. The inequality~$\beta_+^2\xi(1)<I(1)=\log2$ gives~$\beta_+^2a_p^2<p!\log2$. Since~$p!\leq p^{p-1}\leq N^{p-1}$, we have~$\bar r_{N,p}<1$. Define~$X_N=NI(S_N/N)$. For each spin configuration, all factors~$1+\bar r_{N,|e|}\sigma_e$ are positive. Applying~$1+x\leq e^x$ to each factor gives
\begin{equation*}
\prod_{e\in\mathcal E_N}(1+\bar r_{N,|e|}\sigma_e)
\leq\exp\!\left(\sum_{e\in\mathcal E_N}\bar r_{N,|e|}\sigma_e\right)
\leq e^{\beta_+^2\Delta+\lambda_+X_N}.
\end{equation*}
Taking~$\E_{\widetilde h}$ bounds the graph sum in~\eqref{eq:total-mass} by~$e^{\beta_+^2\Delta}\E_{\widetilde h}e^{\lambda_+X_N}$.

It remains to bound~$\E_{\widetilde h}e^{\lambda_+X_N}$ uniformly in~$N$. First change to the uniform spin measure~$\pr_0$. The probability of a configuration under~$\pr_{\widetilde h}$ is~$2^{-N}e^{\widetilde hS_N}/\cosh(\widetilde h)^N$, so
\begin{equation*}
\E_{\widetilde h}e^{\lambda_+X_N}
=\frac{\E_0\!\left[e^{\lambda_+X_N}e^{\widetilde hS_N}\right]}
{\cosh(\widetilde h)^N}.
\end{equation*}
We apply H\"older's inequality under~$\pr_0$ to the numerator. Set
\begin{equation*}
q_1=\frac{1+\lambda_+}{2\lambda_+},
\qquad q_2=\frac{1+\lambda_+}{1-\lambda_+},
\qquad v=q_1\lambda_+=\frac{1+\lambda_+}{2}<1.
\end{equation*}
Since~$q_1,q_2>1$ and~$1/q_1+1/q_2=1$, H\"older's inequality gives
\begin{equation*}
\E_0\!\left[e^{\lambda_+X_N}e^{\widetilde hS_N}\right]
\leq\bigl(\E_0e^{vX_N}\bigr)^{1/q_1}
\bigl(\E_0e^{q_2\widetilde hS_N}\bigr)^{1/q_2}.
\end{equation*}
Under~$\pr_0$, the spins are independent and uniform on~$\{-1,1\}$, so
\begin{equation*}
\E_0e^{q_2\widetilde hS_N}
=\prod_{i=1}^N\E_0e^{q_2\widetilde h\sigma_i}
=\left(\frac{e^{q_2\widetilde h}+e^{-q_2\widetilde h}}2\right)^N
=\cosh(q_2\widetilde h)^N.
\end{equation*}
Substituting this identity and dividing by~$\cosh(\widetilde h)^N$ yields
\begin{equation*}
\E_{\widetilde h}e^{\lambda_+X_N}
\leq\bigl(\E_0e^{vX_N}\bigr)^{1/q_1}
\frac{\cosh(q_2\widetilde h)^{N/q_2}}{\cosh(\widetilde h)^N}.
\end{equation*}
To bound~$\E_0e^{vX_N}$, we first estimate the tail probability of~$X_N$. Since~$I$ is even, $X_N=NI(|S_N|/N)$. Strict monotonicity of~$I$ on~$[0,1]$ gives, for~$0\leq x\leq1$,
\begin{align*}
\pr_0(X_N>NI(x))
=\pr_0(|S_N|>Nx)
\leq2\pr_0(S_N\geq Nx)
\leq2e^{-NI(x)}.
\end{align*}
The two inequalities use symmetry of~$S_N$ and the Chernoff bound, respectively. As~$x$ ranges over~$[0,1]$, $NI(x)$ varies continuously from~$0$ to~$N\log2$. Writing~$t=NI(x)$ therefore gives~$\pr_0(X_N>t)\leq2e^{-t}$ for~$0\leq t\leq N\log2$. For larger~$t$, the probability is zero because~$X_N\leq N\log2$. Since~$X_N\geq0$, integrating this tail bound yields
\begin{align*}
\E_0e^{vX_N}
&=1+v\int_0^\infty e^{vt}\pr_0(X_N>t)\,dt\\
&\leq1+2v\int_0^\infty e^{-(1-v)t}\,dt\\
&=1+\frac{2v}{1-v}=\frac{3+\lambda_+}{1-\lambda_+}.
\end{align*}
The integral of~$e^{-(1-v)t}$ is finite because~$v<1$. Since~$q_1>1$ and~$(3+\lambda_+)/(1-\lambda_+)>1$, the first factor satisfies
\begin{equation*}
\bigl(\E_0e^{vX_N}\bigr)^{1/q_1}
\leq\left(\frac{3+\lambda_+}{1-\lambda_+}\right)^{1/q_1}
\leq\frac{3+\lambda_+}{1-\lambda_+}.
\end{equation*}
For the second factor, note that~$\cosh\widetilde h\geq1$, and the bound~$\log\cosh y\leq y^2/2$ then gives~$\cosh(q_2\widetilde h)^{N/q_2}\leq e^{Nq_2\widetilde h^{\,2}/2}\leq e^{q_2\rho_0^4/2}$, where the last step uses~$\widetilde h\leq h^2$ and~$Nh^4\leq\rho_0^4$. These estimates together prove~\eqref{eq:total-mass}.

To prove the bound~\eqref{eq:edge-tail}, apply~$r_{N,p}\leq\theta^2\bar r_{N,p}$ to every edge. For a graph with more than~$t$ edges, the product of these factors is at most~$\theta^{2(\lfloor t\rfloor+1)}$. Summing over these graphs and using~\eqref{eq:total-mass} completes the proof.
\end{proof}

In particular, using the above results we have
\(
\E\widehat Z_N^2=\sum_\Gamma\E W_h(\Gamma)^2\leq C_+.
\)
The estimate~\eqref{eq:total-mass} will also be used in the lower tail control of $\hat{Z}_N$. For any~$u\in(0,1)$, \eqref{eq:truncation-isometry} and~\eqref{eq:edge-tail} show that we can approximate~$\widehat Z_N$ with mean-square error at most~$u$ by retaining all graphs with at most~$L$ edges, for a suitable choice of~$L=O(1+\log(1/u))$. Directly listing the retained graphs gives a cost bound~$N^{O(L)}$, which does not establish polynomial dependence jointly on~$N,u^{-1}$. The next two sections choose graph families that can be summed more efficiently for mixed $p$-spin case and SK model respectively.

\section{Mixed $p$-spin models with $p\ge 3$}\label{sec:pure}\label{sec:algorithm}

We truncate by the number of used vertices. Section~\ref{subsec:higher-support} defines the truncation and bounds the discarded part. Section~\ref{subsec:pure-evaluation} evaluates the retained sum by local spin averages. The main estimate must account for both the small second moments of the edges and the number of ways to choose the vertex labels. For example, in the pure three-spin model, a fixed hypergraph shape with~$v$ vertices, all of degree two (if possible), has~$2v/3$ edges. The total contribution from hypergraphs of this shape is~$O(N^{-v/3})$. Lemma~\ref{lem:higher-support-mass} gives a bound after summing over all shapes and labels, including the external field factors. The idea is based on the combinatorial analysis in~\cite{DW26}.

\subsection{Restricting the number of vertices}\label{subsec:higher-support}

For integers~$0\leq k\leq N$ and~$1\leq v\leq N$, define
\begin{equation}\label{eq:higher-retained-sum}
T_{N,k}=\sum_{\substack{\Gamma\subseteq\mathcal E_N\\|V(\Gamma)|\leq k}}W_h(\Gamma),
\qquad M_{N,v}=\sum_{\substack{\Gamma\subseteq\mathcal E_N\\|V(\Gamma)|=v}}
\hh^{2|\partial\Gamma|}\prod_{e\in\Gamma}r_{N,|e|}.
\end{equation}
Here~$T_{N,k}$ is the retained sum, and~$M_{N,v}$ is the sum of expected squared weights over hypergraphs using exactly~$v$ vertices. Lemma~\ref{lem:orthogonality} gives
\begin{equation*}
\E|\widehat Z_N-T_{N,k}|^2=\sum_{v=k+1}^N M_{N,v}.
\end{equation*}
Thus a bound on~$M_{N,v}$ will control the truncation error. Recall~$p_m\geq3$, let~$\rho_1=\max\{1,\rho_0\}$, and define
\begin{equation*}
\kappa_{\min}=\frac{p_m-2}{2p_m},\qquad
C_0=\exp\!\left(1+\sum_{p\in\mathcal P}\frac{\beta_0^2a_p^2}{p!}\right)\rho_1^2,
\qquad C_\star=C_0^{1/\kappa_{\min}}.
\end{equation*}

\begin{lemma}\label{lem:higher-support-mass}
Let~$\alpha\geq1/4$ and define
\begin{equation*}
\kappa_\alpha=\min\left\{\frac{p_m-2}{p_m},\,2\alpha-\frac1{p_m}\right\}.
\end{equation*}
If~$|\hh|\leq\rho_1N^{-\alpha}$, then for every integer~$1\leq v\leq N$,
\begin{equation}\label{eq:higher-support-mass}
M_{N,v}\leq C_0^v(v/N)^{\kappa_\alpha v}
\leq(C_\star v/N)^{\kappa_\alpha v}.
\end{equation}
At~$h=0$, the same bound holds with exponent~$(p_m-2)/p_m$.
\end{lemma}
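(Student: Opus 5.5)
The plan is to bound $M_{N,v}$ by summing first over the vertex set $S=V(\Gamma)$ with $|S|=v$, and then over hypergraphs $\Gamma$ supported on $S$ that cover every vertex of $S$. The number of choices of $S$ is $\binom Nv\leq(eN/v)^v$. Write $y=v/N\in(0,1]$. Each edge of order $p$ has second moment at most $\beta_0^2a_p^2N^{-(p-1)}$ by \eqref{eq:edge-second-moment}. I will split the factor $N^{-(p-1)}$ among the vertices of the edge so that each incidence carries a power of $y$, and then use the covering constraint to extract the decay $y^{\kappa_\alpha v}$.

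\textbf{Step 1 (degree reweighting).} Set $\gamma=(p_m-1)/p_m$ and $\gamma_e=(|e|-1)/|e|$, so that $\gamma_e\geq\gamma$ for every edge. Fix $i\in S$ and use $y\leq1$.
\begin{itemize}
\item If $\deg_\Gamma(i)\geq2$, then $1\leq y^{2\gamma}\prod_{e\ni i}y^{-\gamma_e}$.
\item If $\deg_\Gamma(i)=1$, then $1\leq y^{\gamma}\prod_{e\ni i}y^{-\gamma_e}$.
\end{itemize}
A vertex of degree one is necessarily in $\partial\Gamma$, so it also carries a factor $\hh^2\leq\rho_1^2N^{-2\alpha}\leq\rho_1^2y^{2\alpha}$. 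Every other odd vertex carries $\hh^2\leq\rho_1^2$, and I use $\rho_1\geq1$ to bound the remaining external-field factors uniformly.

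Multiplying these inequalities over $i\in S$ gives
\[
\hh^{2|\partial\Gamma|}\prod_{e\in\Gamma}r_{N,|e|}\leq\rho_1^{2v}\prod_{i\in S}y^{a_i}\prod_{e\in\Gamma}\beta_0^2a_{|e|}^2N^{-(|e|-1)}y^{-(|e|-1)},
\]
where $a_i=2\gamma$ if $\deg_\Gamma(i)\geq2$ and $a_i=\gamma+2\alpha$ if $\deg_\Gamma(i)=1$. Here I used $\prod_{i\in e}y^{-\gamma_e}=y^{-(|e|-1)}$.

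\textbf{Step 2 (drop the covering constraint).} Since $y\leq1$, we have $y^{a_i}\leq y^{\min(2\gamma,\gamma+2\alpha)}$ for every $i$. Once this uniform factor is pulled out, I drop the requirement $V(\Gamma)=S$ and sum over all $\Gamma\subseteq\bigcup_p\binom Sp$. With $N^{-(p-1)}y^{-(p-1)}=v^{-(p-1)}$, the product over edges gives
\[
\prod_{p\in\mathcal P}\Bigl(1+\beta_0^2a_p^2v^{-(p-1)}\Bigr)^{\binom vp}\leq\exp\Bigl(\sum_p\binom vp\beta_0^2a_p^2v^{-(p-1)}\Bigr)\leq\exp\Bigl(v\sum_p\frac{\beta_0^2a_p^2}{p!}\Bigr),
\]
using $\binom vp\leq v^p/p!$. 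This is exactly why the choice $y=v/N$ is made: it makes the edge sum only exponential in $v$.

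\textbf{Step 3 (combine).} Multiplying by $(eN/v)^v=e^vy^{-v}$, each vertex contributes at most $y^{\min(2\gamma,\gamma+2\alpha)-1}$. This exponent equals $\min\{(p_m-2)/p_m,\,2\alpha-1/p_m\}=\kappa_\alpha$. The constant is $e\cdot\exp(\sum_p\beta_0^2a_p^2/p!)\cdot\rho_1^2=C_0$ per vertex, which gives $M_{N,v}\leq C_0^v(v/N)^{\kappa_\alpha v}$.

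At $h=0$, every $\Gamma$ with $\partial\Gamma\neq\emptyset$ has zero weight. So all vertices have even degree, hence degree at least $2$, and only the exponent $2\gamma-1=(p_m-2)/p_m$ arises.

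For the second inequality in \eqref{eq:higher-support-mass}, first note $\kappa_\alpha\geq\kappa_{\min}$. Indeed $(p_m-2)/p_m\geq(p_m-2)/(2p_m)$, and $\alpha\geq1/4$ gives $2\alpha-1/p_m\geq1/2-1/p_m=\kappa_{\min}$. Since $C_\star\geq1$, we get $C_0=C_\star^{\kappa_{\min}}\leq C_\star^{\kappa_\alpha}$, which yields $C_0^v(v/N)^{\kappa_\alpha v}\leq(C_\star v/N)^{\kappa_\alpha v}$.

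The main obstacle is Step 1: distributing each edge's factor $N^{-(p-1)}$ over its vertices with exponents $\gamma_e\geq\gamma$ that are uniform in $p$. These exponents must be compatible both with the degree-one versus degree-two dichotomy and with the external-field factor. I also need to check that, after dropping the covering constraint, the reweighted edge sum stays exponential in $v$ rather than polynomial in $N$.
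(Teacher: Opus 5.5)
Your proof is correct and gives exactly the paper's constant $C_0$ and exponent $\kappa_\alpha$. The route differs in how the exponent is extracted.

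The paper works globally. It stratifies by the edge-count vector $(n_p)_{p\in\mathcal P}$ and uses $|\partial\Gamma|\ge(2v-d)_+$ with total degree $d=\sum_p pn_p$. It then bounds the exponent of $v/N$ below by the piecewise-affine function $g(d)=-v+\frac{p_m-1}{p_m}d+2\alpha(2v-d)_+$ and minimizes over $d\ge v$, with the minimum at $d=v$ or $d=2v$. Finally it sums over $(n_p)$ via $\binom{\binom vp}{n_p}\le (v^p/p!)^{n_p}/n_p!$ and the exponential series.

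You work locally. You split each edge's factor $(v/N)^{|e|-1}$ into $(v/N)^{(|e|-1)/|e|}$ per incidence and charge vertex by vertex, using only that degree-one vertices are odd. The pointwise bound $a_i\ge\min\{2\gamma,\gamma+2\alpha\}$ replaces the optimization over $d$. The factorization $\prod_e(1+c_e)\le\exp(\sum_e c_e)$ over potential edges inside $S$ replaces the sum over edge counts.

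What each buys: your version is shorter and avoids both the stratification and the case analysis in $d$. The threshold $\alpha=(p_m-1)/(2p_m)$ still appears, as the condition $\gamma+2\alpha\ge2\gamma$. The paper's version makes the tradeoff between total degree and the number of odd vertices explicit. Its inequality $\sum_p(p-1)n_p\ge\frac{p_m-1}{p_m}d$ is just the aggregate of your per-incidence bound $\gamma_e\ge\gamma$.

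You also got the order of steps right, and it matters. The reweighting in Step 1 uses $V(\Gamma)=S$, so every vertex has degree at least one. It must therefore be applied before the covering constraint is dropped in Step 2.
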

\begin{proof}
There are two sources of decay. For a fixed number of used vertices, a low total degree forces many odd-degree vertices and therefore many small external field factors. A higher total degree instead gives decay through the edge second moments. We first bound this tradeoff for fixed edge counts, then sum over those counts. Fix~$v$ and nonnegative integers~$(n_p)_{p\in\mathcal P}$. Consider hypergraphs~$\Gamma$ with~$|V(\Gamma)|=v$ and exactly~$n_p$ edges of order~$p$. For edge counts admitting such a graph, the total degree is~$d=\sum_{p\in\mathcal P}pn_p\geq v$. Every used even-degree vertex has degree at least two, so~$d\geq2v-|\partial\Gamma|$. Let~$\ell=(2v-d)_+$, where~$x_+=\max\{x,0\}$. Then~$|\partial\Gamma|\geq\ell$ and~$0\leq\ell\leq v$. Since~$|\hh|\leq1$ and~$|\hh|\leq\rho_1N^{-\alpha}$,
\begin{equation}\label{eq:higher-external-field-factor}
\hh^{2|\partial\Gamma|}
\leq |\hh|^{2\ell}
\leq \rho_1^{2\ell}N^{-2\alpha\ell}
\leq \rho_1^{2v}(v/N)^{2\alpha\ell}.
\end{equation}
The last inequality uses~$\rho_1\geq1$, $\ell\leq v$, and~$v\geq1$.
There are at most~$\binom Nv\prod_p\binom{\binom vp}{n_p}$ choices of vertices and edges. This count may include edge selections that leave some chosen vertices unused, so it is an upper bound. For every graph under consideration, the edge second moments have product~$\prod_p r_{N,p}^{n_p}$. Multiplying the count by this product and the upper bound in~\eqref{eq:higher-external-field-factor} gives a total contribution at most
\begin{align*}
&\binom Nv\rho_1^{2v}(v/N)^{2\alpha\ell}
\prod_{p\in\mathcal P}\binom{\binom vp}{n_p}r_{N,p}^{n_p}\\
&\qquad\leq(eN/v)^v\rho_1^{2v}
(v/N)^{2\alpha(2v-d)_++\sum_p(p-1)n_p}
\prod_{p\in\mathcal P}\frac{(\beta_0^2a_p^2v/p!)^{n_p}}{n_p!}.
\end{align*}
Here we used~\eqref{eq:edge-second-moment}, $\binom Nv\leq(eN/v)^v$, and~$\binom{\binom vp}{n_p}\leq(v^p/p!)^{n_p}/n_p!$. The identity~$v^pN^{1-p}=v(v/N)^{p-1}$ gives the displayed powers of~$v/N$.
We next show that the exponent of~$v/N$ is at least~$\kappa_\alpha v$ for every choice of nonnegative integers~$(n_p)_{p\in\mathcal P}$ satisfying~$\sum_{p\in\mathcal P}pn_p\geq v$. The exponent of~$v/N$ in the preceding bound on the sum of second moments is
\begin{equation*}
-v+\sum_{p\in\mathcal P}(p-1)n_p+2\alpha(2v-d)_+.
\end{equation*}
For every~$p\in\mathcal P$, $(p-1)/p\geq(p_m-1)/p_m$, so~$\sum_{p\in\mathcal P}(p-1)n_p\geq(p_m-1)d/p_m$. This exponent is therefore at least~$g(d)$, where we define
\begin{equation*}
g(d)=-v+\frac{p_m-1}{p_m}d+2\alpha(2v-d)_+.
\end{equation*}
This function is affine on~$[v,2v]$ and increasing after~$2v$. Its minimum is therefore the smaller of~$g(v)=(2\alpha-1/p_m)v$ and~$g(2v)=(p_m-2)v/p_m$, which is~$\kappa_\alpha v$.

We now sum the bounds for graphs with fixed edge counts to estimate~$M_{N,v}$. Since~$v/N\leq1$, the preceding lower bound on the exponent gives
\begin{align*}
M_{N,v}
&\leq e^v\rho_1^{2v}(v/N)^{\kappa_\alpha v}
\prod_{p\in\mathcal P}\left(\sum_{n_p=0}^{\infty}
\frac{(\beta_0^2a_p^2v/p!)^{n_p}}{n_p!}\right)\\
&=e^v\rho_1^{2v}(v/N)^{\kappa_\alpha v}
\exp\!\left(v\sum_{p\in\mathcal P}\frac{\beta_0^2a_p^2}{p!}\right)\\
&=C_0^v(v/N)^{\kappa_\alpha v}.
\end{align*}
In the first inequality, each~$n_p$ ranges over all nonnegative integers. This includes edge counts that cannot occur for a hypergraph with~$v$ used vertices, which only increases the sum because every term is nonnegative. The first equality follows by summing the exponential series for each~$p\in\mathcal P$.

For the second inequality in~\eqref{eq:higher-support-mass}, $C_\star^{\kappa_\alpha}=C_0^{\kappa_\alpha/\kappa_{\min}}\geq C_0$, since~$\kappa_\alpha\geq\kappa_{\min}$ and~$C_0\geq1$. At~$h=0$, only graphs with no odd-degree vertices contribute. They have~$d\geq2v$, which gives the stated exponent.
\end{proof}

The exponent reaches its value at zero external field when~$\alpha\geq(p_m-1)/(2p_m)$. The same transition threshold appears in the results of Dey and Wu~\cite{DW26}*{Corollary~2.4} on free energy fluctuations for mixed~$p$-spin models. For~$h\ne0$, define~$\alpha_h=\log(\rho_1/|\hh|)/\log N$. Then~$|\hh|=\rho_1N^{-\alpha_h}$. The external field assumption~\eqref{eq:field-range} gives~$|\hh|\leq|h|\leq\rho_0N^{-1/4}\leq\rho_1N^{-1/4}$, hence~$\alpha_h\geq1/4$. Applying the lemma with~$\alpha=\alpha_h$ gives the following exponent, which is computable from the supplied~$N,h$.

\begin{equation}\label{eq:field-dependent-kappa}
\kappa=\begin{cases}
\displaystyle\min\left\{\frac{p_m-2}{p_m},\,
\frac{2\log(\rho_1/|\hh|)}{\log N}-\frac1{p_m}\right\},&h\ne0,\\[6pt]
(p_m-2)/p_m,&h=0.
\end{cases}
\end{equation}
Thus~$\kappa_{\min}\leq\kappa\leq(p_m-2)/p_m$. To bound the mean-square truncation error~$\sum_{v=k+1}^{N}M_{N,v}$ given in~\eqref{eq:truncation-isometry}, define
\begin{equation*}
\Phi_N(t)=\kappa t\log\frac{N}{C_\star t}\quad(t>0),
\qquad \Phi_N(0)=0.
\end{equation*}
Lemma~\ref{lem:higher-support-mass} gives~$M_{N,v}\leq e^{-\Phi_N(v)}$, but this upper bound need not be small when~$v$ is comparable to~$N$. Choose a fixed~$\gamma>0$ small enough that~$e^{-\Phi_N(v)}$ decreases geometrically for~$v\leq\gamma N$. We can then bound~$\sum_{k<v\leq\gamma N}M_{N,v}$ by a geometric series. To bound~$\sum_{v>\gamma N}M_{N,v}$, we use the edge-tail estimate~\eqref{eq:edge-tail}, since every hypergraph with more than~$\gamma N$ vertices has more than~$\gamma N/P$ edges. Specifically, choose~$\gamma\in(0,1/4)$ such that
\begin{equation*}
\kappa_{\min}\left(\log\frac{1}{C_\star\gamma}-1\right)\geq2.
\end{equation*}
This ensures
\[
\Phi_N'(t)=\kappa(\log(N/(C_\star t))-1)\geq2 \quad \text{for~$0<t\leq\gamma N$},
\]
giving the required geometric decrease. The upper bound~$1/4$ is only a convenient restriction. Section~\ref{subsec:cutoff-selection} gives an explicit choice of~$\gamma$, depending only on the fixed model and~$\beta_0,\rho_0$.

\begin{proposition}\label{prop:higher-truncation}
Let $\gamma$ be chosen as above, for every integer~$0\leq k<\lfloor\gamma N\rfloor$, we have
\begin{equation}\label{eq:higher-truncation}
\E|\widehat Z_N-T_{N,k}|^2\leq2e^{-\Phi_N(k+1)}+C_+\theta^{2\gamma N/P}.
\end{equation}
\end{proposition}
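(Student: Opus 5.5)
We start from the identity $\E|\widehat Z_N-T_{N,k}|^2=\sum_{v=k+1}^N M_{N,v}$, which follows from Lemma~\ref{lem:orthogonality}, and split the range of $v$ at $\lfloor\gamma N\rfloor$. We write the total as
\begin{equation*}
\sum_{k<v\leq\lfloor\gamma N\rfloor}M_{N,v}+\sum_{v>\lfloor\gamma N\rfloor}M_{N,v}.
\end{equation*}
The first piece is controlled by Lemma~\ref{lem:higher-support-mass}, applied with $\alpha=\alpha_h$ (or the $h=0$ version), which gives $M_{N,v}\leq e^{-\Phi_N(v)}$. The second piece is controlled by the edge-tail bound~\eqref{eq:edge-tail}.

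\textbf{Small supports.} For $k+1\leq v\leq\gamma N$, the mean value theorem applied to $\Phi_N$ on $[k+1,v]$ gives
\begin{equation*}
\Phi_N(v)\geq\Phi_N(k+1)+2(v-k-1).
\end{equation*}
This uses the lower bound $\Phi_N'(t)\geq2$ for $0<t\leq\gamma N$, which holds by the choice of $\gamma$ and $\kappa\geq\kappa_{\min}$. Here $k+1\geq1>0$, so $\Phi_N$ is smooth on the interval. Hence
\begin{equation*}
\sum_{k<v\leq\gamma N}M_{N,v}\leq e^{-\Phi_N(k+1)}\sum_{j\geq0}e^{-2j}=\frac{e^{-\Phi_N(k+1)}}{1-e^{-2}}\leq2e^{-\Phi_N(k+1)}.
\end{equation*}
The hypothesis $k<\lfloor\gamma N\rfloor$ guarantees that $k+1\leq\gamma N$, so this range is nonempty and the derivative bound applies throughout it.

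\textbf{Large supports.} Every hypergraph with $|V(\Gamma)|=v>\lfloor\gamma N\rfloor$ has $v>\gamma N$, since $v$ is an integer. Each edge covers at most $P$ vertices, so $|\Gamma|\geq v/P>\gamma N/P$. All terms are nonnegative, so the sum of $M_{N,v}$ over $v>\lfloor\gamma N\rfloor$ is at most the full edge-tail sum over graphs with $|\Gamma|>t$, taking $t=\gamma N/P$. By~\eqref{eq:edge-tail} this is at most $C_+\theta^{2\gamma N/P}$. Adding the two pieces yields~\eqref{eq:higher-truncation}.

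\textbf{Main obstacle.} The only delicate step is the boundary bookkeeping. First, the geometric summation must stop exactly where the derivative bound is valid, which is why the cutoff is taken at $\lfloor\gamma N\rfloor$. Second, the derivative bound itself must hold uniformly in $\kappa$. Since $\log\frac{N}{C_\star t}-1\geq\log\frac{1}{C_\star\gamma}-1>0$ on this range and $\kappa\geq\kappa_{\min}$, the bound follows from the defining inequality for $\gamma$.
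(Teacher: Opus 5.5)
Your proposal is correct and follows essentially the same route as the paper. Like the paper, you split $\sum_{v>k}M_{N,v}$ at $\lfloor\gamma N\rfloor$, bound the small-support range geometrically via Lemma~\ref{lem:higher-support-mass} together with $\Phi_N'\geq2$, and send the large-support range to the edge-tail bound~\eqref{eq:edge-tail} using $|\Gamma|\geq v/P>\gamma N/P$. Your extra bookkeeping (integrality of $v$, $k+1\leq\lfloor\gamma N\rfloor$, and uniformity of the derivative bound in $\kappa\geq\kappa_{\min}$) makes explicit what the paper leaves implicit.
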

\begin{proof}
By~\eqref{eq:truncation-isometry}, the error is~$\sum_{v>k}M_{N,v}$. We split this sum at~$\lfloor\gamma N\rfloor$. For~$k<v\leq\lfloor\gamma N\rfloor$, the derivative bound gives~$\Phi_N(v)\geq\Phi_N(k+1)+2(v-k-1)$. Hence
\begin{equation*}
\sum_{v=k+1}^{\lfloor\gamma N\rfloor}M_{N,v}
\leq e^{-\Phi_N(k+1)}\sum_{j\geq0}e^{-2j}
=\frac{e^{-\Phi_N(k+1)}}{1-e^{-2}}
\leq2e^{-\Phi_N(k+1)}.
\end{equation*}
Every remaining graph has more than~$\gamma N$ vertices and hence more than~$\gamma N/P$ edges, since each edge contains at most~$P$ vertices. The edge-tail bound~\eqref{eq:edge-tail} gives the second term.
\end{proof}

\subsection{Exact summation on small vertex sets}\label{subsec:pure-evaluation}

We now evaluate~$T_{N,k}$ without enumerating its hypergraphs. For~$S\subseteq[N]$, define the local spin average
\begin{equation}\label{eq:local-spin-average}
G(S)=\frac{1}{(2\cosh h)^{|S|}}\sum_{\sigma\in\{-1,1\}^{S}}e^{h\sum_{i\in S}\sigma_i}
\prod_{\substack{e\in\mathcal E_N\\e\subseteq S}}(1+\omega_e\sigma_e).
\end{equation}
 Expanding the product gives
\begin{equation*}
G(S)=\sum_{\substack{\Gamma\subseteq\mathcal E_N\\V(\Gamma)\subseteq S}}W_h(\Gamma).
\end{equation*}
Note that~$G(\varnothing)=1$ and~$G([N])=\widehat Z_N$. Summing~$G(S)$ over~$|S|\leq k$ can count the same hypergraph more than once. To correct this overcounting, we add and subtract the values~$G(S)$ using coefficients~$c_{N,k}(|S|)$. For integers~$0\leq s\leq k<N$, define
\begin{equation}\label{eq:exact-support-weight}
c_{N,k}(s)=(-1)^{k-s}\binom{N-s-1}{k-s}.
\end{equation}

The following Proposition~\ref{prop:pure-evaluation} proves that these coefficients cancel the extra occurrences of each graph weight, so every hypergraph using at most~$k$ vertices is counted exactly once.
To evaluate \(G(S)\), we sum over the \(2^{|S|}\) spin configurations on \(S\). For each configuration, we multiply the factors \(1+\omega_e\sigma_e\) over interactions \(e\subseteq S\) and  the probability of that configuration. Finally add the resulting weighted product to the sum. These products are evaluated numerically without expanding them into hypergraph terms. 
Algorithm~\ref{alg:support-sum} sums~$c_{N,k}(|S|)G(S)$ over all~$S\subseteq[N]$ with~$|S|\leq k$.

\begin{algorithm}[!htbp]
\caption{\textsc{SupportSum}$(N,h,\omega,k)$}\label{alg:support-sum}
\begin{algorithmic}[1]
\Require Cached interaction weights~$(\omega_e)$, $p_m\geq3$, external field~$h$, integer~$0\leq k\leq N$.
\If{$k=N$}
  \State \Return $G([N])$ from~\eqref{eq:local-spin-average}.
\EndIf
\State Precompute~$c_{N,k}(s)$ for~$0\leq s\leq k$, and initialize~$T\gets0$.
\For{each $S\subseteq[N]$ with~$|S|\leq k$, generated by cardinality}
  \State Evaluate~$G(S)$ and add~$c_{N,k}(|S|)G(S)$ to~$T$.
\EndFor
\State \Return $T$.
\end{algorithmic}
\end{algorithm}

The identity stated in the following proposition is purely algebraic. For arbitrary finite real edge weights, define~$W_h$ and~$T_{N,k}$ by the same formulas for sums over graphs. No positivity or disorder assumption is needed for this identity.

\begin{proposition}\label{prop:pure-evaluation}
For every finite real weight array, external field~$h\in\mathbb R$, and integer~$0\leq k<N$,
\begin{equation}\label{eq:pure-exact-support}
T_{N,k}=\sum_{\substack{S\subseteq[N]\\|S|\leq k}}c_{N,k}(|S|)G(S).
\end{equation}
For~$k=N$, we have~$T_{N,N}=G([N])$. Algorithm~\ref{alg:support-sum} returns~$T_{N,k}$ exactly for every~$0\leq k\leq N$, using at most
\begin{equation}\label{eq:pure-evaluation-cost}
C_P(N+1)^{P+2}\sum_{s=0}^{k}\binom Ns2^s
\end{equation}
operations, including set generation and weight lookup. The constant~$C_P$ depends only on~$P$.
\end{proposition}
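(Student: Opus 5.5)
The plan is to prove the identity by comparing coefficients of each graph weight, and then to bound the cost by direct counting.

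\textbf{Reduction to a combinatorial identity.} Substitute the expansion $G(S)=\sum_{V(\Gamma)\subseteq S}W_h(\Gamma)$ into the right side of \eqref{eq:pure-exact-support} and exchange the finite sums. The right side becomes $\sum_\Gamma a(\Gamma)W_h(\Gamma)$, where
\[
a(\Gamma)=\sum_{\substack{S\supseteq V(\Gamma)\\|S|\leq k}}c_{N,k}(|S|).
\]
This coefficient depends only on $m=|V(\Gamma)|$. The same formal expansion holds for arbitrary real weights, so it suffices to show that $a=1$ when $m\leq k$ and $a=0$ when $m>k$. The case $m>k$ is immediate, since no such $S$ exists. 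For $m\leq k$, counting supersets by size gives
\[
a=\sum_{s=m}^{k}\binom{N-m}{s-m}(-1)^{k-s}\binom{N-s-1}{k-s}.
\]

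\textbf{Evaluating the alternating sum.} I plan to use upper negation, $(-1)^{j}\binom{N-s-1}{j}=\binom{j-N+s}{j}$ with $j=k-s$. The $j$-th term then equals $\binom{j-N+k}{j}$ independently of $s$, namely $(-1)^j\binom{N-k-1}{j}$. Reindex by $i=s-m$ and $j=k-s$, so that $i+j=k-m$. Vandermonde's identity then gives
\[
a=\sum_{i+j=k-m}\binom{N-m}{i}\binom{-(N-k)}{j}=\binom{k-m}{k-m}=1 .
\]
The binomials with negative upper index are understood in the generalized sense. Equivalently, $a$ is the coefficient of $x^{k-m}$ in $(1+x)^{N-m}(1+x)^{-(N-k)}=(1+x)^{k-m}$. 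This generating-function bookkeeping is the main obstacle: the sign and index conventions have to be handled carefully. I would check it against the small cases $k=m$ and $m=0$. The hypothesis $k<N$ is needed so that $N-s-1\geq0$ and the coefficients are ordinary binomials. The case $k=N$ holds by definition, since $G([N])$ sums over all $\Gamma$.

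\textbf{Cost.} Subsets of each size $s\leq k$ are generated in lexicographic order at $O(N)$ cost each. The coefficients $c_{N,k}(s)$ are precomputed with $O(N^2)$ operations. For a fixed $S$, iterate over its $2^{|S|}$ configurations. For each configuration, compute the tilt $e^{h\sum\sigma_i}$ and the product over the at most $\sum_{p\in\mathcal P}\binom{|S|}{p}\leq P(N+1)^P$ interactions $e\subseteq S$. Enumerating these interactions and computing each $\sigma_e$ and table lookup costs $O(P\cdot N)$ operations. Summing over configurations and sets gives a bound of the form $C_P(N+1)^{P+2}\sum_{s\leq k}\binom Ns 2^s$. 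Correctness of the algorithm then follows from the identity just proved.
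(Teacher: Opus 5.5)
Your argument is correct, and it differs from the paper's mainly in how the combinatorial identity is organized.

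The paper first proves the M\"obius-inversion identity $\sum_{S\subseteq U}(-1)^{|U|-|S|}G(S)=\sum_{V(\Gamma)=U}W_h(\Gamma)$ by the binomial theorem. It then sums this over $|U|\leq k$ and exchanges the sums. The multiplier of $G(S)$ becomes the truncated alternating sum $\sum_{j=0}^{k-s}(-1)^j\binom{N-s}{j}$, which telescopes by Pascal's rule to $c_{N,k}(s)$. That route derives the coefficients from inclusion--exclusion and uses only ordinary binomials.

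You instead take $c_{N,k}$ as given and verify it. You compute each graph's total multiplicity in a single Chu--Vandermonde convolution with a negative upper index, equivalently reading off the coefficient of $x^{k-m}$ in $(1+x)^{N-m}(1+x)^{-(N-k)}$. This is shorter, but it does not explain why these particular coefficients arise. Your cost accounting is essentially the paper's.

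One intermediate line needs correcting. Substituting $s=k-j$ into $\binom{j-N+s}{j}$ gives $\binom{k-N}{j}=(-1)^j\binom{N-k-1+j}{j}$. It does not give $\binom{j-N+k}{j}=(-1)^j\binom{N-k-1}{j}$, as you wrote. The expression you wrote gives the wrong multiplicity: for $N=3$, $k=1$, $m=0$ it yields $3-1=2$ instead of $1$. The Vandermonde step you actually carry out uses the correct factor $\binom{-(N-k)}{j}$, so your conclusion $a=\binom{k-m}{k-m}=1$ stands. The small-case check you proposed would have caught this slip.
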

\begin{proof}
For~$k<N$, fix~$U\subseteq[N]$. We first establish the identity
\begin{equation*}
\sum_{S\subseteq U}(-1)^{|U|-|S|}G(S)
=\sum_{V(\Gamma)=U}W_h(\Gamma).
\end{equation*}
Expanding~$G(S)$ and interchanging the finite sums gives
\begin{align*}
\sum_{S\subseteq U}(-1)^{|U|-|S|}G(S)
&=\sum_{S\subseteq U}\sum_{V(\Gamma)\subseteq U}
(-1)^{|U|-|S|}\ind_{\{V(\Gamma)\subseteq S\}}W_h(\Gamma)\\
&=\sum_{V(\Gamma)\subseteq U}W_h(\Gamma)
\sum_{V(\Gamma)\subseteq S\subseteq U}(-1)^{|U|-|S|}.
\end{align*}
For a fixed graph, write~$X=V(\Gamma)$ and~$m=|U\setminus X|$. Since~$S$ must contain~$X$, it can omit vertices only from~$U\setminus X$. Omitting exactly~$j$ of these vertices gives~$\binom mj$ choices of~$S$, each with~$|U|-|S|=j$. Therefore,
\begin{equation*}
\sum_{X\subseteq S\subseteq U}(-1)^{|U|-|S|}
=\sum_{j=0}^{m}\binom mj(-1)^j.
\end{equation*}
If~$m>0$, this sum is zero by the binomial theorem. If~$m=0$, only~$S=U=X$ contributes, giving one. Thus only graphs with~$V(\Gamma)=U$ remain, proving the identity. Summing this identity over~$|U|\leq k$ gives~$T_{N,k}$, since each retained graph appears once. Interchanging the finite sums gives
\begin{align*}
T_{N,k}
=\sum_{\substack{U\subseteq[N]\\|U|\leq k}}
\sum_{S\subseteq U}(-1)^{|U|-|S|}G(S)
=\sum_{\substack{S\subseteq[N]\\|S|\leq k}}G(S)
\sum_{\substack{S\subseteq U\subseteq[N]\\|U|\leq k}}(-1)^{|U|-|S|}.
\end{align*}
For a fixed~$S$, write~$s=|S|$. A superset~$U$ of size~$s+j$ is obtained by adding~$j$ vertices from~$[N]\setminus S$, giving~$\binom{N-s}{j}$ choices, each with sign~$(-1)^j$. The multiplier of~$G(S)$ is therefore~$\sum_{j=0}^{k-s}(-1)^j\binom{N-s}{j}$. Applying Pascal's identity and shifting the index in the second sum yields
\begin{align*}
\sum_{j=0}^{k-s}(-1)^j\binom{N-s}{j}
&=\sum_{j=0}^{k-s}(-1)^j\binom{N-s-1}{j}
-\sum_{j=0}^{k-s-1}(-1)^j\binom{N-s-1}{j}\\
&=(-1)^{k-s}\binom{N-s-1}{k-s}
=c_{N,k}(s).
\end{align*}
The second sum is empty when~$s=k$. Substituting~$c_{N,k}(|S|)$ for the multiplier of~$G(S)$ proves~\eqref{eq:pure-exact-support}. For~$k=N$, both~$G([N])$ and~$T_{N,N}$ sum all graph weights, so they are equal.

It remains to bound the work of evaluating each local average once. Precompute the coefficients by Pascal's recurrence in~$O(N^2)$ operations. A standard fixed-size subset recursion generates all sets of size~$s$ in~$O((N+1)^2\binom Ns)$ operations, including their vertex lists. For each set, enumerate its~$2^s$ spin assignments. Store membership and spins in an array indexed by~$[N]$, then scan the interaction list. It has at most~$C_P(N+1)^P$ entries, each involving at most~$P$ sites. Computing the external field weight and normalization takes another~$O(N)$ operations per assignment. Thus the total cost is at most~$C_P(N+1)^{P+1}$ per assignment. Putting together proves~\eqref{eq:pure-evaluation-cost}.
\end{proof}

Section~\ref{subsec:cutoff-selection} uses the mean-square error bound~\eqref{eq:higher-truncation} and the bound~\eqref{eq:pure-evaluation-cost} on the number of operations needed to compute~$T_{N,k}$ to choose~$k$ so that the requested accuracy is achieved with a polynomial operation count.

\section{The SK model}\label{sec:sk}

Building on the cycle expansion of Aizenman, Lebowitz, and Ruelle~\cite{ALR87}, Dey and Wu~\cite{DW23} identify simple cycles and paths as the leading clusters for~$\widehat Z_N$ at high temperature under weak external fields. For fixed~$\beta>0$, the total contribution of cycles of any fixed length~$\ell\geq3$ remains of order one in~$L^2$ as~$N\to\infty$. Paths of any fixed length~$\ell\geq1$ also have a total contribution of order one when~$h=\rho N^{-1/4}$ with fixed~$\rho\ne0$. Thus graphs using a fixed number of vertices can still make a nonvanishing contribution, so the $o(1)$ estimate in~\eqref{eq:higher-support-mass} is unavailable in SK.

We instead bound the number of \emph{branching vertices}, those of degree at least three, and separately the number of edges. Section~\ref{subsec:sk-support} gives the error control. Sections~\ref{subsec:sk-evaluation} and~\ref{subsec:sk-colorful} decompose graphs with a fixed branching set into paths and cycles, then sum them by color coding. Section~\ref{subsec:sk-estimator} averages the resulting estimates and bounds their error and cost.

\subsection{Branching vertices truncation}\label{subsec:sk-support}

Write~$\omega_{ij}=\omega_{\{i,j\}}$ for distinct~$i,j\in[N]$ and~$\omega_{ii}=0$. Here~$\Gamma\subseteq\mathcal E_{N,2}$ is a selected set of pair interactions, and all its degrees are computed using only these edges. Define its branching set by
\begin{equation*}
\Br(\Gamma)=\{i\in[N]:\deg_\Gamma(i)\geq3\}.
\end{equation*}
For integers~$0\leq D\leq N$ and~$L\geq1$, define
\begin{equation}\label{eq:sk-retained-sum}
T^{\mathrm{SK}}_{N,D,L}=\sum_{\substack{\Gamma\subseteq\mathcal E_N\\|\Br(\Gamma)|\leq D,\ |\Gamma|\leq L}}W_h(\Gamma).
\end{equation}
For~$0\leq b\leq N$, define the sum of expected squared weights over graphs with exactly~$b$ branching vertices by
\begin{align*}
M^{\mathrm{SK}}_{N,b}
&=\sum_{\substack{\Gamma\subseteq\mathcal E_N\\|\Br(\Gamma)|=b}}\E W_h(\Gamma)^2=\sum_{\substack{\Gamma\subseteq\mathcal E_N\\|\Br(\Gamma)|=b}}
\hh^{2|\partial\Gamma|}r_{N,2}^{|\Gamma|}.
\end{align*}
This sum fixes only the number of branching vertices and imposes no edge cutoff. The external field factor is squared. Thus two odd-degree vertices contribute~$\hh^4\leq\rho_0^4/N$ to each expected squared weight.

Figure~\ref{fig:sk-branching-scales} illustrates the second-moment scales for three example graph shapes.
\begin{figure}[!htbp]
\centering
\includegraphics[width=\linewidth]{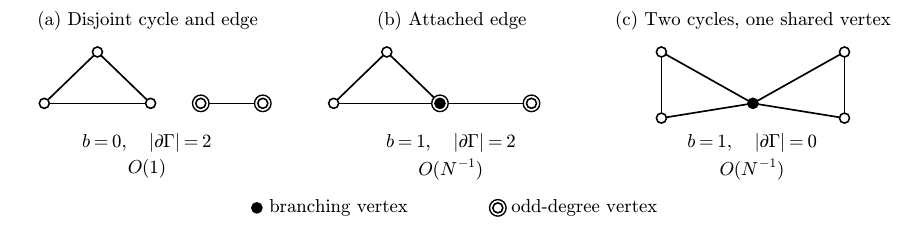}
\caption{Branching and second-moment decay. Here $b=|\Br(\Gamma)|$. Each bound concerns the sum of expected squared weights over all labeled copies of the displayed shape, under $|h|\leq\rho_0N^{-1/4}$. Attaching the separate edge to the triangle removes one free vertex label without changing the two odd-degree vertices, whose squared external field factor is $\widehat h^4=O(N^{-1})$. Two triangles sharing one vertex have one more edge than vertices and also give $O(N^{-1})$.}
\label{fig:sk-branching-scales}
\end{figure}

Recall the constants~$C_+$ and~$\theta$ defined in~\eqref{eq:algorithm-mass-constant} and~\eqref{eq:algorithm-temperature-constants}, respectively, where~$\beta_{\mathrm f}=1$ and~$\Delta=1/2$ for SK. For the branching bound, define
\begin{align}\label{eq:sk-mass-constants}
C_{\mathrm{SK}}=\sqrt{\frac2{1-\beta_0^2}}\exp\!\left(\frac{\rho_0^4\beta_0^2}{1-\beta_0^2}\right),\quad
A_{\mathrm{SK}}=\frac{128e}{(1-\beta_0^2)^2}
\left(\frac{\rho_0^2\beta_0^3}{6}+\frac{\beta_0^4}{24}\right),
\quad C_\star=\max\{1,A_{\mathrm{SK}}\}.
\end{align}
For SK, set~$\kappa=\kappa_{\min}=1$ and define
\begin{equation*}
\Phi_N(t)=t\log\frac{N}{C_\star t}\quad(t>0),
\qquad \Phi_N(0)=0.
\end{equation*}

To estimate the mean-square error from the branching cutoff, we first bound~$M^{\mathrm{SK}}_{N,b}$ for each number~$b$ of branching vertices.

\begin{lemma}\label{lem:sk-branching-mass}
For every integer~$1\leq b\leq N$,
\begin{equation}\label{eq:sk-branching-mass}
M^{\mathrm{SK}}_{N,b}\leq C_{\mathrm{SK}}\left(\frac{A_{\mathrm{SK}}b}{N}\right)^b
\leq C_{\mathrm{SK}}e^{-\Phi_N(b)}.
\end{equation}
\end{lemma}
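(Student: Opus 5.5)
The plan is to bound $M^{\mathrm{SK}}_{N,b}$ by decomposing each graph around its branching set and assigning a factor of order $N^{-1/2}$ to every half-edge at a branching vertex. Since every branching vertex has degree at least three, this should yield a net factor of order $b/N$ per branching vertex once we sum over labels. The second inequality in~\eqref{eq:sk-branching-mass} then follows from $C_\star\geq A_{\mathrm{SK}}$, because $(A_{\mathrm{SK}}b/N)^b\leq(C_\star b/N)^b=e^{-\Phi_N(b)}$. All the work is in the first inequality.

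\emph{Step 1 (decomposition).} Fix $\Gamma$ with $\Br(\Gamma)=B$ and $|B|=b$. Every vertex outside $B$ has degree at most two. Hence $\Gamma$ splits uniquely into three kinds of pieces. The first kind is edges and paths whose interior vertices lie outside $B$ and whose two endpoints lie in $B$; these are \emph{bridges}, with loops from $B$ back to itself allowed. The second kind is paths with one endpoint in $B$ and the other a degree-one vertex outside $B$; these are \emph{tails}. The third kind is components disjoint from $B$, which are simple paths and cycles. The odd-degree vertices are the free ends of tails, the endpoints of free paths, and the vertices of $B$ with odd degree. The weight $\hh^{2|\partial\Gamma|}r_{N,2}^{|\Gamma|}$ factorizes over these pieces, with a factor $\hh^2$ for each odd vertex in $B$. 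I would then relax the disjointness of interior labels, which only increases the bound since all terms are nonnegative.

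\emph{Step 2 (piece sums).} I will use $r_{N,2}\leq\beta_0^2/N$ from~\eqref{eq:edge-second-moment} and $\hh^4\leq\rho_0^4/N$.
\begin{itemize}
\item A bridge between two fixed vertices of $B$ with $\ell$ edges has at most $N^{\ell-1}$ labelings. Summing over $\ell$ gives at most $N^{-1}\sum_\ell\beta_0^{2\ell}\leq N^{-1}/(1-\beta_0^2)$.
\item A tail from a fixed vertex has at most $N^{\ell}$ labelings and carries $\hh^2$. Summing over $\ell$ gives at most $\rho_0^2N^{-1/2}/(1-\beta_0^2)$.
\item The free components are handled exactly as in the zero-branching analysis. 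Cycles contribute at most $\exp(\sum_{\ell\geq3}\beta_0^{2\ell}/(2\ell))\leq(1-\beta_0^2)^{-1/2}$, and free paths carry the factor $\hh^4$. Together these should give the constant $C_{\mathrm{SK}}$, with $\exp(\rho_0^4\beta_0^2/(1-\beta_0^2))$ coming from the paths.
\end{itemize}

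\emph{Step 3 (per-vertex bookkeeping).} Split each bridge factor $N^{-1}$ as $N^{-1/2}$ per endpoint half-edge. Then a vertex of $B$ with degree $d$ receives $N^{-d/2}$ times its half-edge factors. I would also account for the $\hh^2$ when $d$ is odd, and for the ways of pairing its half-edges into bridges and tails, with a $1/d!$-type symmetry factor from unordered edge choices.

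The leading cases are $d=3$ and $d=4$. For $d=3$, the vertex is odd, so we get about $\hh^2\beta_0^3N^{-3/2}/3!\lesssim\rho_0^2\beta_0^3N^{-2}/6$. For $d=4$, we get $\beta_0^4N^{-2}/24$. Larger $d$ gain extra powers of $N^{-1/2}$ and can be absorbed by a geometric series. This is exactly where the term $\rho_0^2\beta_0^3/6+\beta_0^4/24$ in $A_{\mathrm{SK}}$ should arise. The factor $(1-\beta_0^2)^{-2}$ comes from the path-length geometric series, and numerical factors such as $128$ come from counting half-edge pairings.

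Choosing $B$ costs $\binom Nb\leq(eN/b)^b$. Multiplying gives $(eN/b)^b\bigl(A'/N^2\bigr)^b=(eA'b/N)^b\cdot b^{-2b}\cdots$. This needs care, so I would instead use $\binom Nb\leq N^b/b!$ together with the per-vertex factor $N^{-2}$ times a count of at most $b$ choices of bridge partners per half-edge. That count produces the $b^b$, and $b^b/b!\leq e^b$ gives $(A_{\mathrm{SK}}b/N)^b$.

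I expect the main obstacle to be Step 3. One must organize the sum over bridge multigraphs on $B$ so that bridge partners are counted correctly; this is the source of the factor $b$ per vertex. The bound has to be uniform in degrees and must allow bridges that are single edges inside $B$. The factor $N^{-1/2}$ per half-edge also has to be split consistently, because a degree-three vertex all of whose half-edges are bridges would have only $N^{-3/2}$ with no $\hh^2$. My first approach is a weight-per-half-edge generating function: each half-edge gets weight $\beta_0N^{-1/2}(1-\beta_0^2)^{-1}(b+\rho_0^2\sqrt N\,\hh^{2}\cdots)$, and the odd-degree parity factor is attached to the vertex. I would then check that every degree $d\geq3$ yields at most $A_{\mathrm{SK}}b/N^{2}\cdot N$ after including the label.
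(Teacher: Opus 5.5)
Your Steps~1--2 are fine. The bridge/tail/free-component decomposition is complete, and relaxing label disjointness is a legitimate upper bound. The single-piece sums are also correct: at most $\beta_0^2N^{-1}/(1-\beta_0^2)$ per bridge, $\hh^2\beta_0^2/(1-\beta_0^2)$ per tail, and a constant below $C_{\mathrm{SK}}$ for the free part. The second inequality is also fine.

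\textbf{The gap is Step~3.} It carries the whole content of the lemma, and you leave it as a plan with an incorrect count. Bridges join branching vertices to one another, so the number of ways to arrange them couples all of $B$. For bridge degrees $k_a$ it is of configuration-model size $(2m-1)!!/\prod_a k_a!$ with $2m=\sum_a k_a$. Already the $4$-regular simple graphs on $B$ number about $(4b-1)!!/24^b$, which is of order $c^b b^{2b}$. So one factor $b$ per vertex is not an upper bound; indeed $N^b/b!\cdot N^{-2b}\cdot b^b$ is at most $(e/N)^b$. It is exactly this $b^{2b}$, set against $\binom Nb\le(eN/b)^b$, that produces the $b^b$ in the lemma.

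If you instead allow $b$ partner choices per half-edge, each bridge is counted from both ends. A degree-four vertex then gets $b^4$, and you end with $(Cb^3/N)^b$, which is useless once $b\gtrsim N^{1/3}$. Your remark that larger degrees are ``absorbed by a geometric series'' also overlooks that the pairing count grows with the degrees, which are unbounded at branching vertices. So you have no mechanism for summing the coupled pairing factor over all degree sequences with $d_a\ge3$.

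\textbf{The missing idea.} The paper supplies this mechanism, and it does not need your decomposition. It starts from the Frieze--Karo\'nski bound $(2m-1)!!/\prod_i d_i!$ for the whole graph. It then writes $(2m-1)!!=\E_z|z|^{2m}$ for an auxiliary standard Gaussian $z$, so the summand factorizes over vertices. Summing each degree separately at $x=\sqrt{r_{N,2}}\,|z|$ gives at most $e^{\hh^2x+x^2/2}$ per outside vertex and $(\hh^2x^3/6+x^4/24)e^{x^2/2}$ per branching vertex. The Gaussian integral stays finite because $Nr_{N,2}\le\beta_0^2<1$. The moment $\E_z z^{4b}=(4b-1)!!\le(4b)^{2b}$ then supplies the $b^{2b}$. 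Completing your route by linearizing the bridge generating function $\exp\bigl(\tfrac{c}{2N}(\sum_{a\in B}y_a)^2\bigr)$ with a Gaussian would amount to the same trick.

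\textbf{The constants.} The Gaussian step is also where the stated constants come from. With $\eta=(1-\beta_0^2)/2$, the factors $(1-\beta_0^2)^{-2}$ and $128$ arise from $\eta^{-2}=4(1-\beta_0^2)^{-2}$ and from $2^{b-1}\bigl(1+\eta^{-2b}(4b-1)!!\bigr)\le(32b^2/\eta^2)^b$. Tails are absorbed into the global exponent $N\hh^2\sqrt{r_{N,2}}\,|z|$, and hence into $C_{\mathrm{SK}}$. Your attribution of these constants to path-length series and half-edge pairings is not a derivation. Even a completed version of your argument would still have to check that it yields the explicit $A_{\mathrm{SK}}$ and $C_{\mathrm{SK}}$.

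Finally, a degree-three vertex of $B$ is odd and always carries $\hh^2$, so the case ``$N^{-3/2}$ with no $\hh^2$'' does not arise.
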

\begin{proof}
We first count graphs with prescribed degrees, then sum over the permitted degrees. The calculation is related to~\cite{DW23}*{Lemma~4.1}. For any nonnegative integer~$d$, write
\begin{equation*}
\widetilde d=d\bmod2\in\{0,1\}.
\end{equation*}
Fix~$A\subseteq[N]$ with~$|A|=b$. Consider simple graphs with branching set~$A$ and prescribed degrees~$d_1,\ldots,d_N$, and let~$m$ be their number of edges. Then
\begin{equation*}
d_i\geq3\quad(i\in A),\qquad
d_i\in\{0,1,2\}\quad(i\in[N]\setminus A),\qquad
\sum_{i=1}^N d_i=2m.
\end{equation*}
The counting argument in~\cite{FK24}*{Lemma~11.1 and (11.1)} bounds the number of simple graphs with these degrees by
\(
\frac{(2m-1)!!}{\prod_{i=1}^N d_i!}.
\)
Each such graph contributes~$r_{N,2}^{m}\prod_i\hh^{\,2\widetilde d_i}$ to~$M^{\mathrm{SK}}_{N,b}$.

To sum the counting bound over degree sequences, we write it as the expectation of a product with one factor for each vertex. Let~$z$ be an auxiliary standard Gaussian, and write~$\E_z$ for its expectation. Since~$\E_z|z|^{2m}=(2m-1)!!$, the contribution of graphs with this degree sequence is bounded by
\begin{equation*}
\frac{(2m-1)!!\,r_{N,2}^{m}}{\prod_i d_i!}
\prod_i\hh^{\,2\widetilde d_i}
=\E_z\prod_{i=1}^N
\frac{\hh^{\,2\widetilde d_i}r_{N,2}^{d_i/2}|z|^{d_i}}{d_i!}.
\end{equation*}
We now sum over all~$d_i\geq3$ for~$i\in A$ and all~$d_i\in\{0,1,2\}$ for~$i\in[N]\setminus A$, without requiring an even total degree or a sequence realized by a simple graph. This only adds nonnegative terms. Tonelli's theorem allows us to interchange the degree sums with~$\E_z$ and sum separately over the degree at each vertex. To bound these sums at~$x=\sqrt{r_{N,2}}\,|z|$, we use the following inequalities, valid for every~$x\geq0$. For a vertex in~$[N]\setminus A$,
\begin{equation*}
\sum_{d=0}^{2}\frac{\hh^{\,2\widetilde d}x^d}{d!}
=1+\hh^2x+\frac{x^2}{2}
\leq e^{\hh^2x+x^2/2}.
\end{equation*}
For a vertex in~$A$, separating odd and even degrees gives
\begin{align*}
\sum_{d\geq3}\frac{\hh^{\,2\widetilde d}x^d}{d!}
=\hh^2\sum_{j\geq0}\frac{x^{2j+3}}{(2j+3)!}
+\sum_{j\geq0}\frac{x^{2j+4}}{(2j+4)!}
\leq\left(\frac{\hh^2x^3}{6}+\frac{x^4}{24}\right)e^{x^2/2}.
\end{align*}
Here we used~$(2j+3)!\geq6\,2^jj!$ and~$(2j+4)!\geq24\,2^jj!$. Applying the bounds for the two degree sums, using~$(N-b)\hh^2x\leq N\hh^2x$, and summing over the~$\binom Nb$ choices of~$A$, we obtain
\begin{equation*}
M^{\mathrm{SK}}_{N,b}\leq\binom Nb\,\E_z\!\left[
\left(\frac{\hh^2r_{N,2}^{3/2}|z|^3}{6}+\frac{r_{N,2}^{2}z^4}{24}\right)^b
 e^{N\hh^2\sqrt{r_{N,2}}\,|z|+Nr_{N,2}z^2/2}\right].
\end{equation*}

We next estimate this expectation. The bounds~$r_{N,2}\leq\beta_0^2/N$ and~$\hh^2\leq\rho_0^2N^{-1/2}$ imply
\begin{equation*}
\frac{\hh^2r_{N,2}^{3/2}|z|^3}{6}+\frac{r_{N,2}^{2}z^4}{24}
\leq\frac{1}{N^2}
\left(\frac{\rho_0^2\beta_0^3}{6}+\frac{\beta_0^4}{24}\right)(1+z^4),
\end{equation*}
where we used~$|z|^3\leq1+z^4$. To keep the Gaussian integral finite, we bound the exponent by a constant plus a quadratic term whose coefficient is below~$1/2$. Set
\begin{equation*}
\eta=\frac{1-\beta_0^2}{2}\in(0,1/2].
\end{equation*}
Completing the square gives
\begin{align*}
N\hh^2\sqrt{r_{N,2}}\,|z|+\frac{Nr_{N,2}}2z^2
\leq\rho_0^2\beta_0|z|+\frac{\beta_0^2}{2}z^2
\leq\frac{\rho_0^4\beta_0^2}{1-\beta_0^2}+\frac{1-\eta}{2}z^2.
\end{align*}

Multiplying the Gaussian density by~$e^{(1-\eta)z^2/2}$ leaves~$e^{-\eta z^2/2}/\sqrt{2\pi}$. Rescaling the integral therefore yields
\begin{align*}
\E_z\!\left[(1+z^4)^b e^{(1-\eta)z^2/2}\right]
&=\eta^{-1/2}\E_z(1+\eta^{-2}z^4)^b\\
&\leq\eta^{-1/2}2^{b-1}\left(1+\eta^{-2b}(4b-1)!!\right)
\leq\eta^{-1/2}\left(\frac{32b^2}{\eta^2}\right)^b.
\end{align*}
The inequalities use~$(1+t)^b\leq2^{b-1}(1+t^b)$ for~$t\geq0$, the Gaussian moment formula, and~$(4b-1)!!\leq(4b)^{2b}$, together with~$b\geq1$ and~$\eta\leq1$.
Combining these estimates and using the definition of~$C_{\mathrm{SK}}$, we have
\begin{equation*}
M^{\mathrm{SK}}_{N,b}\leq C_{\mathrm{SK}}\binom Nb
\left(\frac{\rho_0^2\beta_0^3/6+\beta_0^4/24}{N^2}\right)^b
\left(\frac{32b^2}{\eta^2}\right)^b.
\end{equation*}
Now~$\binom Nb\leq(eN/b)^b$ and
\(
\frac{32e}{\eta^2}
\left(\frac{\rho_0^2\beta_0^3}{6}+\frac{\beta_0^4}{24}\right)
=A_{\mathrm{SK}},
\)
so
\(
M^{\mathrm{SK}}_{N,b}\leq C_{\mathrm{SK}}\left(\frac{A_{\mathrm{SK}}b}{N}\right)^b.
\)
Finally, $A_{\mathrm{SK}}\leq C_\star$ gives
\begin{equation*}
\left(\frac{A_{\mathrm{SK}}b}{N}\right)^b
\leq\left(\frac{C_\star b}{N}\right)^b
=e^{-\Phi_N(b)}.
\end{equation*}
the proof is complete.
\end{proof}

We now combine the branching bound with the edge-tail estimate. Choose~$\gamma\in(0,1/4)$ with~$\log(1/(C_\star\gamma))-1\geq2$. Then~$\Phi_N'\geq2$ on~$(0,\gamma N]$, so the branching bounds can be summed geometrically in this range. More than~$\gamma N$ branching vertices force more than~$3\gamma N/2$ edges, where we use the edge-tail bound instead.

\begin{proposition}[SK truncation]\label{prop:sk-truncation}
For integers~$0\leq D<\lfloor\gamma N\rfloor$ and~$L\geq1$, let~$T^{\mathrm{SK}}_{N,D,L}$ be the sum in~\eqref{eq:sk-retained-sum}, retaining exactly the graphs with at most~$D$ branching vertices and at most~$L$ original edges. Then
\begin{equation}\label{eq:sk-truncation}
\E|\widehat Z_N-T^{\mathrm{SK}}_{N,D,L}|^2
\leq C_+\theta^{2(L+1)}+2C_{\mathrm{SK}}e^{-\Phi_N(D+1)}+C_+\theta^{3\gamma N}.
\end{equation}
\end{proposition}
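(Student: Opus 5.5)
By the truncation isometry~\eqref{eq:truncation-isometry}, the left side of~\eqref{eq:sk-truncation} equals the sum of $\hh^{2|\partial\Gamma|}r_{N,2}^{|\Gamma|}$ over all discarded graphs. These are the graphs with $|\Gamma|>L$ or $|\Br(\Gamma)|>D$. We bound this sum by the sum over three (possibly overlapping) families, each bounded separately. All terms are nonnegative, so overcounting only increases the bound.

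\textbf{Step 1: graphs with $|\Gamma|>L$.} By the edge-tail bound~\eqref{eq:edge-tail} with $t=L$, these contribute at most $C_+\theta^{2(L+1)}$.

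\textbf{Step 2: graphs with $D<|\Br(\Gamma)|\leq\lfloor\gamma N\rfloor$.} By Lemma~\ref{lem:sk-branching-mass}, the contribution is at most $\sum_{b=D+1}^{\lfloor\gamma N\rfloor}C_{\mathrm{SK}}e^{-\Phi_N(b)}$. Note that $D+1\geq1$, so the lemma applies to every term. Since $\Phi_N'(t)=\log(N/(C_\star t))-1\geq2$ on $(0,\gamma N]$ by the choice of $\gamma$, we get $\Phi_N(b)\geq\Phi_N(D+1)+2(b-D-1)$ for $D+1\leq b\leq\lfloor\gamma N\rfloor$. Summing the geometric series as in Proposition~\ref{prop:higher-truncation} gives at most $C_{\mathrm{SK}}e^{-\Phi_N(D+1)}/(1-e^{-2})\leq2C_{\mathrm{SK}}e^{-\Phi_N(D+1)}$.

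\textbf{Step 3: graphs with $|\Br(\Gamma)|>\lfloor\gamma N\rfloor$.} Here $|\Br(\Gamma)|>\gamma N$, because the branching count is an integer. Each branching vertex has degree at least three, so by the handshake identity
\[
2|\Gamma|=\sum_i\deg_\Gamma(i)\geq3|\Br(\Gamma)|>3\gamma N,
\]
that is, $|\Gamma|>3\gamma N/2$. Applying~\eqref{eq:edge-tail} with $t=3\gamma N/2$ bounds this family by $C_+\theta^{2t}=C_+\theta^{3\gamma N}$.

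Adding the three bounds gives~\eqref{eq:sk-truncation}. The only delicate point is the bookkeeping at the split point $\lfloor\gamma N\rfloor$. One must check that the derivative bound on $\Phi_N$ covers every $b\leq\lfloor\gamma N\rfloor$, which holds since $b\leq\gamma N$. One must also use the integrality argument above to obtain the strict inequality $|\Br(\Gamma)|>\gamma N$ in Step 3. Otherwise the argument is routine.
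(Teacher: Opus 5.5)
Your proposal is correct and follows essentially the same argument as the paper. The paper splits the discarded graphs into disjoint classes, drops the edge restriction, and splits at $\lfloor\gamma N\rfloor$; your covering by three possibly overlapping nonnegative families is equivalent, and your three bounds (edge tail, geometric summation of Lemma~\ref{lem:sk-branching-mass} via $\Phi_N'\geq2$, and the handshake argument giving $|\Gamma|>3\gamma N/2$) are the same as the paper's.
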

\begin{proof}
Let~$B=\lfloor\gamma N\rfloor$. A graph is discarded if it has more than~$L$ edges, or if it has at most~$L$ edges but more than~$D$ branching vertices. These alternatives are disjoint. Orthogonality therefore gives
\begin{align*}
\E|\widehat Z_N-T^{\mathrm{SK}}_{N,D,L}|^2
&=\sum_{|\Gamma|>L}\E W_h(\Gamma)^2
 +\sum_{\substack{|\Gamma|\leq L\\|\Br(\Gamma)|>D}}\E W_h(\Gamma)^2\\
&\leq\sum_{|\Gamma|>L}\E W_h(\Gamma)^2
 +\sum_{b=D+1}^{B}M^{\mathrm{SK}}_{N,b}
 +\sum_{|\Br(\Gamma)|>B}\E W_h(\Gamma)^2.
\end{align*}
In the inequality we removed the edge restriction from the second sum and split it at branching count~$B$. All summands are nonnegative.

The first sum is at most~$C_+\theta^{2(L+1)}$ by~\eqref{eq:edge-tail}. For the middle sum, $D+1\leq b\leq B$ and~$\Phi_N'\geq2$ give
$\Phi_N(b)-\Phi_N(D+1)\geq2(b-D-1)$. Lemma~\ref{lem:sk-branching-mass} consequently yields
\begin{equation*}
\sum_{b=D+1}^{B}M^{\mathrm{SK}}_{N,b}
\leq C_{\mathrm{SK}}e^{-\Phi_N(D+1)}\sum_{j=0}^{B-D-1}e^{-2j}
\leq2C_{\mathrm{SK}}e^{-\Phi_N(D+1)}.
\end{equation*}
For the last sum, the integer inequality~$|\Br(\Gamma)|>B$ implies~$|\Br(\Gamma)|>\gamma N$. Each branching vertex has degree at least three, so
\begin{equation*}
2|\Gamma|=\sum_{i=1}^N\deg_\Gamma(i)
\geq3|\Br(\Gamma)|>3\gamma N.
\end{equation*}
Thus this family is contained in~$\{|\Gamma|>3\gamma N/2\}$, whose total expected squared weight is at most~$C_+\theta^{3\gamma N}$ by~\eqref{eq:edge-tail}. Adding the three estimates proves~\eqref{eq:sk-truncation}.
\end{proof}

\subsection{Augmented graphs}\label{subsec:sk-evaluation}

To evaluate~$T^{\mathrm{SK}}_{N,D,L}$, we group graphs by their exact branching set. We first represent the external field factor by extra auxiliary edges. Adjoin an auxiliary vertex~$0$, assign~$\omega_{0i}=\omega_{i0}=\hh$ for~$i\in[N]$, and define the augmented graph on~$\{0,1,\ldots,N\}$ by
\begin{equation*}
\Gamma^+=\Gamma\cup\bigl\{\{0,i\}:i\in\partial\Gamma\bigr\}.
\end{equation*}
All original edges are kept. The added edges give
\begin{equation*}
\prod_{e\in\Gamma^+}\omega_e=W_h(\Gamma),\qquad
\deg_{\Gamma^+}(i)=\deg_\Gamma(i)+\ind_{\{i\in\partial\Gamma\}}\quad(i\in[N]).
\end{equation*}
The degree in~$\Gamma^+$ of every original vertex is even. The degree at~$0$ is~$|\partial\Gamma|$, which is even by the sum-of-degrees identity. Conversely, evenness at each original vertex uniquely determines whether its edge to~$0$ is present. Thus augmentation is a bijection between simple graphs on~$[N]$ and simple graphs on~$\{0,1,\ldots,N\}$ in which every degree is even, allowing isolated vertices in both cases. An original vertex is branching exactly when its augmented degree is at least four. Only original edges count toward~$L$.

Fix~$A\subseteq[N]$ and define
\begin{equation*}
A^+=A\cup\{0\},\qquad A^c=[N]\setminus A.
\end{equation*}
We call each vertex of~$A^c$ an \emph{outside vertex}. If~$\Br(\Gamma)=A$, then~$\deg_\Gamma(v)\in\{0,1,2\}$ for~$v\in A^c$. Augmentation changes these degrees to~$0,2,2$, respectively, so
\begin{equation}\label{eq:sk-outside-degree}
\deg_{\Gamma^+}(v)=2\qquad\bigl(v\in V(\Gamma^+)\cap A^c\bigr).
\end{equation}
This degree constraint yields a decomposition into the following two types of edge sets.

--\emph{Path pieces}. The collection~$\mathscr P_A$ of \emph{path pieces} consists of the edge sets
\begin{equation*}
\bigl\{\{v_{i-1},v_i\}:1\leq i\leq\ell\bigr\},\qquad \ell\geq1,
\end{equation*}
where~$v_0,\ldots,v_\ell\in\{0,1,\ldots,N\}$ are pairwise distinct and satisfy
\begin{equation*}
v_0,v_\ell\in A^+,\qquad \{v_1,\ldots,v_{\ell-1}\}\subseteq A^c.
\end{equation*}
Thus the endpoints are~$v_0\ne v_\ell$, and the internal vertices are~$v_1,\ldots,v_{\ell-1}$. The case~$\ell=1$ gives the singleton edge set~$\{\{a,b\}\}$ with~$a,b\in A^+$ and~$a\ne b$. We call~$\{a,b\}$ a \emph{direct edge}.

--\emph{Cycle pieces}. The collection~$\mathscr C_A$ of \emph{cycle pieces} consists of the edge sets
\begin{equation*}
\bigl\{\{v_{i-1},v_i\}:1\leq i\leq\ell\bigr\},\qquad
\ell\geq3,\quad v_\ell=v_0,
\end{equation*}
where~$v_0,\ldots,v_{\ell-1}\in\{0,1,\ldots,N\}$ are pairwise distinct and
\begin{equation*}
\bigl|\{v_0,\ldots,v_{\ell-1}\}\cap A^+\bigr|\leq1.
\end{equation*}
Members of~$\mathscr P_A$ and~$\mathscr C_A$ are undirected edge sets. Reversing a path, or changing the starting vertex or orientation of a cycle, gives the same piece. Figure~\ref{fig:sk-augmentation}(c) shows two path pieces and two cycle pieces for~$A=\{1\}$.

\begin{figure}[!htbp]
\centering
\includegraphics[width=\linewidth]{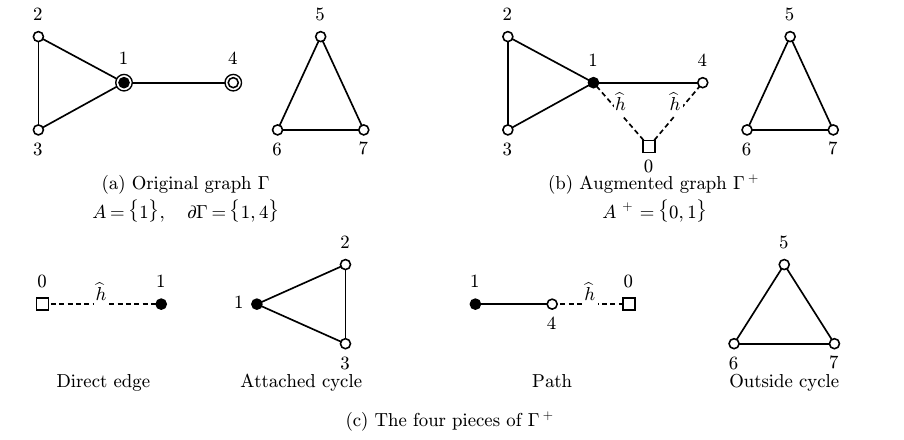}
\caption{Augmentation and the edge partition in Lemma~\ref{lem:sk-decomposition}. In (a), the branching set is $A=\{1\}$ and the odd-degree set is $\partial\Gamma=\{1,4\}$, marked by rings. In (b), two dashed edges to the auxiliary vertex $0$ supply the factor $\widehat h^2$. Row (c) contains exactly the same edges, separated into four pieces. Repeated labels $0$ and $1$ denote the same vertices of $A^+$, not new vertices. The outside vertex sets are disjoint. Only solid, original edges count toward $L$.}
\label{fig:sk-augmentation}
\end{figure}

\begin{lemma}[Path and cycle decomposition]\label{lem:sk-decomposition}
Let~$\Gamma\subseteq\mathcal E_{N,2}$ satisfy~$\Br(\Gamma)=A$. There is a unique collection~$\mathcal Q_\Gamma\subseteq\mathscr P_A\cup\mathscr C_A$ such that
\begin{equation}\label{eq:sk-piece-decomposition}
\Gamma^+=\bigsqcup_{Q\in\mathcal Q_\Gamma}Q,\qquad
V(Q)\cap V(Q')\subseteq A^+\quad
\text{for distinct }Q,Q'\in\mathcal Q_\Gamma.
\end{equation}
The disjoint union means that each edge of~$\Gamma^+$ belongs to exactly one piece. Conversely, let~$\mathcal Q\subseteq\mathscr P_A\cup\mathscr C_A$ satisfy
\begin{equation*}
Q\cap Q'=\varnothing,\qquad V(Q)\cap V(Q')\subseteq A^+
\quad\text{for distinct }Q,Q'\in\mathcal Q,
\end{equation*}
and set~$H=\bigsqcup_{Q\in\mathcal Q}Q$. Then~$H=\Gamma^+$ for a unique~$\Gamma$ with~$\Br(\Gamma)=A$ if and only if
\begin{equation}\label{eq:sk-degree-conditions}
\deg_H(v)\equiv0\pmod2\quad(v\in\{0,1,\ldots,N\}),\qquad
\deg_H(a)\geq4\quad(a\in A).
\end{equation}
\end{lemma}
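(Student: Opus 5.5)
The plan is to treat both directions through the local structure of $\Gamma^+$ at outside vertices. By \eqref{eq:sk-outside-degree}, every outside vertex used by $\Gamma^+$ has degree exactly two, and all degrees of $\Gamma^+$ are even. For existence, I would delete the vertices of $A^+$ and consider the graph $F$ on $V(\Gamma^+)\cap A^c$ formed by the edges of $\Gamma^+$ with both endpoints outside. Since outside degrees in $\Gamma^+$ are two, $F$ has maximum degree at most two. Hence each connected component of $F$ is a path (possibly a single vertex) or a cycle.

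The pieces are then read off component by component.
\begin{itemize}
\item A cycle component of $F$ has all its $\Gamma^+$-degree used inside $F$, so it is a cycle piece containing no vertex of $A^+$.
\item A path component $v_1,\dots,v_{\ell-1}$ has two missing half-edges: one at each end, or two at a lone vertex. These go to vertices of $A^+$, say $a$ (attached to $v_1$) and $b$ (attached to $v_{\ell-1}$). If $a\ne b$, this gives a path piece. If $a=b$, it gives a cycle piece with exactly one vertex in $A^+$; its length is at least $3$ because simple graphs exclude a double edge $a$--$v_1$.
\item Every remaining edge of $\Gamma^+$ has both ends in $A^+$ and is a direct edge.
\end{itemize}
These pieces partition the edges of $\Gamma^+$ and meet only in $A^+$, because distinct components of $F$ are vertex-disjoint.

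For uniqueness, I would argue that any admissible collection $\mathcal Q$ determines the components above. Each outside vertex lies in at most one piece, since pieces intersect only in $A^+$. Its two incident edges therefore lie in the same piece, so the maximal outside-vertex stretches, together with their attaching edges, are forced to be single pieces. Direct edges are forced as singletons, since pieces are disjoint edge sets. Pieces have no internal vertices in $A^+$ besides endpoints, or the single allowed vertex for cycles, so no piece can run through a vertex of $A^+$ and merge two stretches. I expect this "no merging across $A^+$" point to be the main place needing care, and I would handle it directly from the definitions of $\mathscr P_A$ and $\mathscr C_A$.

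For the converse, if $H=\Gamma^+$ with $\Br(\Gamma)=A$, then evenness holds by the augmentation discussion. Also, each $a\in A$ has $\deg_\Gamma(a)\ge3$, so its augmented degree is at least $4$ after parity correction. Conversely, assume \eqref{eq:sk-degree-conditions}. By the augmentation bijection, every even-degree simple graph on $\{0,\dots,N\}$ equals $\Gamma^+$ for a unique $\Gamma$; here $H$ is simple because its pieces are edge-disjoint and each piece is simple. It remains to check $\Br(\Gamma)=A$. For $a\in A$, $\deg_\Gamma(a)\ge\deg_H(a)-1\ge3$. For an outside vertex $v$, the condition $V(Q)\cap V(Q')\subseteq A^+$ means $v$ lies in at most one piece, where it is internal with degree $2$. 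So $\deg_H(v)\in\{0,2\}$, which gives $\deg_\Gamma(v)\le2$. Hence $\Br(\Gamma)=A$.
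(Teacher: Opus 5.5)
Your proposal is correct and follows essentially the same route as the paper's proof. The components of the outside graph, which has maximum degree two, yield the path and cycle pieces; uniqueness comes from each outside vertex lying in a single piece together with the connectivity of every piece's outside part; and the converse recovers $\Gamma=H\cap\mathcal E_{N,2}$ and checks degrees. One small correction: direct edges are forced to be singleton pieces because, by the definitions of $\mathscr P_A$ and $\mathscr C_A$, only a path piece with $\ell=1$ can contain an edge with both endpoints in $A^+$, not merely because pieces are edge-disjoint.
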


The first part gives the unique path and cycle decomposition of~$\Gamma^+$ when~$\Br(\Gamma)=A$. The second converse part considers pieces that share no edges and meet only in~$A^+$. It shows that their union equals~$\Gamma^+$ for a graph with~$\Br(\Gamma)=A$ if and only if~\eqref{eq:sk-degree-conditions} holds. These conditions require every degree to be even and each vertex of~$A$ to have degree at least four. In Section~\ref{subsec:sk-colorful}, we sum over these collections and impose the degree conditions. Uniqueness ensures that each graph is counted once.

\begin{proof}
Set
\begin{equation*}
U=V(\Gamma^+)\cap A^c,\qquad
G=\left(U,\Gamma^+\cap\binom{U}{2}\right),
\end{equation*}
and let~$\mathcal K$ be the collection of vertex sets of the connected components of~$G$, including isolated vertices. By~\eqref{eq:sk-outside-degree}, every vertex of~$G$ has degree at most two. Each component is therefore a cycle, a path of positive length, or an isolated vertex. For~$K\in\mathcal K$, define
\begin{equation*}
Q_K=\{e\in\Gamma^+:e\cap K\ne\varnothing\}.
\end{equation*}
If the component on~$K$ is a cycle, its vertices already have degree two within~$K$, so
\begin{equation*}
Q_K=\Gamma^+\cap\binom{K}{2}\in\mathscr C_A.
\end{equation*}
Otherwise, write~$K=\{u_1,\ldots,u_r\}$ in path order, allowing~$r=1$ for an isolated vertex. Equation~\eqref{eq:sk-outside-degree} supplies exactly two edges from~$K$ to~$A^+$, giving
\begin{equation*}
Q_K=\bigl\{\{u_i,u_{i+1}\}:1\leq i<r\bigr\}
\cup\bigl\{\{a,u_1\},\{u_r,b\}\bigr\},
\qquad a,b\in A^+.
\end{equation*}
When~$r=1$, the two neighbors~$a,b$ are distinct because~$\Gamma^+$ is simple. Hence~$Q_K\in\mathscr P_A$ if~$a\ne b$, and~$Q_K\in\mathscr C_A$ if~$a=b$. The latter case necessarily has~$r\geq2$. In Figure~\ref{fig:sk-augmentation}(b), the component sets~$\{2,3\}$, $\{4\}$, and~$\{5,6,7\}$ give, respectively, the cycle~$1,2,3,1$, the path~$1,4,0$, and the cycle~$5,6,7,5$ in part~(c).

Define
\begin{equation*}
\mathcal Q_\Gamma=
\bigl\{\{e\}:e\in\Gamma^+\cap\binom{A^+}{2}\bigr\}
\cup\{Q_K:K\in\mathcal K\}.
\end{equation*}
Every edge either has both endpoints in~$A^+$ or meets exactly one component set~$K$. Also~$V(Q_K)\cap A^c=K$, and distinct component sets are disjoint. These facts prove~\eqref{eq:sk-piece-decomposition}. In Figure~\ref{fig:sk-augmentation}, the only direct edge is~$\{0,1\}$.

For uniqueness, consider any collection satisfying~\eqref{eq:sk-piece-decomposition}. Both edges incident to a vertex~$v\in U$ must belong to the same piece, since~$v$ has degree two in every piece containing it. It follows along each component of~$G$ that all edges of~$Q_K$ belong to one piece. For a piece meeting~$A^c$, the subgraph induced by its vertices in~$A^c$ is connected. Thus this piece cannot contain vertices from two different component sets and is exactly~$Q_K$. Every remaining edge has both endpoints in~$A^+$ and can only be a singleton path piece. Thus~$\mathcal Q_\Gamma$ is unique.

For the converse, the pieces have disjoint edge sets and no common vertex in~$A^c$, so~$H$ is simple and
\begin{equation*}
\deg_H(v)\in\{0,2\}\qquad(v\in A^c).
\end{equation*}
The stated degree conditions are necessary by the augmentation identities. Suppose they hold and put~$\Gamma=H\cap\mathcal E_{N,2}$. For~$i\in[N]$,
\begin{equation*}
\deg_\Gamma(i)=\deg_H(i)-\ind_{\{\{0,i\}\in H\}}.
\end{equation*}
Since~$\deg_H(i)$ is even, this gives~$i\in\partial\Gamma$ if and only if~$\{0,i\}\in H$, and hence~$H=\Gamma^+$. Moreover,
\begin{equation*}
\deg_\Gamma(a)\geq4-1=3\quad(a\in A),\qquad
\deg_\Gamma(v)\leq2\quad(v\in A^c).
\end{equation*}
Thus~$\Br(\Gamma)=A$. The identity~$\Gamma=H\cap\mathcal E_{N,2}$ also proves uniqueness.
\end{proof}

\subsection{Color-coded summation}\label{subsec:sk-colorful}

Fix a candidate branching set~$A\subseteq[N]$ and an edge cutoff~$L\geq1$. Lemma~\ref{lem:sk-decomposition} reduces the graph sum to collections of paths and cycles, but listing all possible vertex sequences of these pieces would be too expensive.

\emph{Color coding} was introduced by Alon, Yuster, and Zwick~\cite{AYZ95} to find simple paths and cycles of a prescribed length in a graph. The difficulty is to extend a path without revisiting a vertex. Their idea is to color vertices randomly and consider paths whose vertices have distinct colors. Extending such a path to an unused color preserves simplicity, so one only needs to remember the colors already used. Here we sum together the weights of paths with the same endpoint and color set. This avoids enumerating their vertex sequences. The number of colors is controlled by~$L$. Cycles are handled similarly, as detailed below.

Lemma~\ref{lem:sk-decomposition} allows different pieces to share vertices in~$A^+$, but not in~$A^c$. We therefore leave~$A^+$ uncolored and fix a coloring~$\chi:A^c\to[2L]$. We use~$2L$ colors because any graph with at most~$L$ original edges uses at most~$2L$ vertices in~$A^c$. A graph or piece is called \emph{colorful} if the vertices it uses in~$A^c$ have distinct colors. To illustrate how we sum path weights, fix~$a\in A^+$ and~$j\in A^c$ with~$\chi(j)=2$. Consider all paths~$a,i,j$ with~$i\in A^c$ and~$\chi(i)=1$. If~$v\in A^c$ has a color other than~$1,2$, every such path extends to a simple path~$a,i,j,v$. Their total weight is
\begin{equation*}
\sum_{\substack{i\in A^c\\\chi(i)=1}}\omega_{ai}\omega_{ij}\omega_{jv}
=\Bigg(\sum_{\substack{i\in A^c\\\chi(i)=1}}\omega_{ai}\omega_{ij}\Bigg)\omega_{jv}.
\end{equation*}
The sum in parentheses is the total weight of the original paths. Multiplying it by~$\omega_{jv}$ computes the total weight of their extensions without processing each path separately.

We combine colorful pieces only if their vertices in~$A^c$ use no common color. This prevents different pieces from sharing a vertex in~$A^c$. For a fixed coloring, we thus count only colorful graphs. In Section~\ref{subsec:sk-estimator}, we divide each included graph's weight by its probability of being colorful under a uniformly random coloring. Averaging over independent colorings gives an unbiased estimate of the full retained sum. Here we compute the colorful graph sums exactly for a fixed coloring~$\chi$.

We group colorful graphs by their branching set, number of edges, and colors used in~$A^c$. The weights of graphs in the same group will be added together.
For~$0\leq k\leq L$ and~$S\subseteq[2L]$, let~$\mathcal G_A^\chi(k,S)$ be the set of colorful graphs~$\Gamma\subseteq\mathcal E_{N,2}$ satisfying
\begin{equation*}
\Br(\Gamma)=A,\qquad |\Gamma|=k,\qquad \chi\bigl(V(\Gamma)\setminus A\bigr)=S.
\end{equation*}
Thus this set contains exactly the colorful graphs with branching set~$A$, $k$ edges, and colors~$S$ on their used vertices in~$A^c$. Define their total weight by
\begin{equation}\label{eq:sk-colorful-identity}
f_A^\chi(k,S):=\sum_{\Gamma\in\mathcal G_A^\chi(k,S)}W_h(\Gamma).
\end{equation}
Summing these quantities over~$k,S$ gives the colorful part of the contribution from~$A$ to~$T^{\mathrm{SK}}_{N,D,L}$ when~$|A|\leq D$.

\begin{proposition}\label{prop:sk-colorful}
Let~$h\in\mathbb R$ be the external field, and let~$\omega_{ij}\in\mathbb R$ be the weight of the edge~$\{i,j\}$ for~$1\leq i<j\leq N$. For any candidate branching set~$A\subseteq[N]$, integer~$L\geq1$, and coloring~$\chi:A^c\to[2L]$, the construction described below computes all sums~$f_A^\chi(k,S)$ in~\eqref{eq:sk-colorful-identity}, for~$0\leq k\leq L$ and~$S\subseteq[2L]$, exactly and deterministically. The total number of operations for all these sums is at most
\begin{equation}\label{eq:sk-colorful-cost}
C_{\mathrm{alg}}(N+2L+1)^6\,16^L\,36^{|A|}\,,
\end{equation}
where~$C_{\mathrm{alg}}$ is an absolute constant.
\end{proposition}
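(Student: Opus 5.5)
We will compute the sums by dynamic programming over pieces, using Lemma~\ref{lem:sk-decomposition} to reduce $f_A^\chi(k,S)$ to a weighted sum over collections of colorful path and cycle pieces that use pairwise disjoint color sets in $A^c$. Colorfulness of the whole graph is then equivalent to each piece being colorful and distinct pieces having disjoint color sets. In the first step we tabulate, for each type of piece, its total weight by endpoint data, number of original edges, and color set. For path pieces we fix the ordered endpoints $a,b\in A^+$. We run the standard color-coding recursion $P_a(v,S,k)$, the total weight of colorful walks $a,v_1,\dots,v_r=v$ with $v_i\in A^c$ using color set $S$ and $k$ original edges. Each extension adds a vertex $w\in A^c$ with $\chi(w)\notin S$ and multiplies by $\omega_{vw}$, and the edge count rises by one unless the edge is incident to $0$. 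Closing at $b$ gives a table $\pi_{ab}(k,S)$. Direct edges $\{a,b\}\subseteq A^+$ form separate entries with $S=\varnothing$.

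Cycle pieces split into two kinds. Cycles through exactly one vertex $a\in A^+$ are closed walks from $a$ to $a$ with $r\ge2$ outside vertices. Cycles entirely in $A^c$ are handled by rooting at the minimal-color vertex, or by a fixed start with division by $2\ell$. In both cases we divide by $2$ for orientation, which is legitimate since each undirected colorful cycle corresponds to a fixed number of colorful directed rooted walks. The table for each $(a,b)$ or $a$ costs $O(N^2 L\,4^L)$ operations, which is within the budget $(N+2L+1)^6 16^L$.

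The second step combines pieces while enforcing the degree conditions~\eqref{eq:sk-degree-conditions}. Cycle pieces entirely in $A^c$ do not affect degrees in $A^+$. Every other piece changes the degree of the at most two vertices of $A^+$ it touches. Each touched vertex gains $+2$ from a cycle through it, or $+1$ at each endpoint of a path. For every $a\in A^+$ we need only track a truncated degree state $\degstate(a)\in\{0,1,2,3,\ge4\}\times$parity, at most six states. For $a=0$ only parity matters. The joint state space therefore has size at most $6^{|A|+1}$, and its square accounts for the factor $36^{|A|}$. We process the unordered pairs $\{a,b\}$ of $A^+$ one at a time, and then the single vertices. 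The DP state is (degree-state vector, $k$, $S$). At each stage we fold in any multiset of pieces attached to that pair with disjoint color sets. We do this by repeatedly adding one piece, with ordering fixed to avoid overcounting, or equivalently via a subset-convolution exponential over color sets, since pieces of a given type are indistinct up to their colors. Finally we multiply by the contribution of the free cycles in $A^c$. We then read off the states in which every $a\in A$ is at least four and all parities are even. Each graph is counted once because the decomposition in Lemma~\ref{lem:sk-decomposition} is unique, and the converse direction there shows that every accepted collection comes from exactly one $\Gamma$ with $\Br(\Gamma)=A$.

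The main obstacle is combining the pieces without double counting while keeping the cost bound. Several pieces may share the same endpoint pair $\{a,b\}$, so we must sum over sets of pieces rather than sequences. We will handle this by building the collection in a canonical order, namely inserting pieces in increasing order of the minimal color they use, with direct edges inserted at most once per pair. We will then check that each colorful collection arises from exactly one insertion sequence. The cost count has three factors. Each merge is a sum over $S'\subseteq [2L]\setminus S$, giving $3^{2L}\le16^L$ choices of pairs $(S,S')$. There are $O(L^2)$ choices of edge counts. Transitions between states cost at most $36^{|A|}$, and the number of vertex pairs in $A^+$ is absorbed into the polynomial factor. Together these should give~\eqref{eq:sk-colorful-cost}.
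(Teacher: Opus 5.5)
Your proposal is correct and follows essentially the same route as the paper: color-coded path tables closed into path and cycle pieces with the orientation and root corrections, then combination over pairwise disjoint color sets with states indexed by $(k,S)$ and six truncated degree states per vertex of $A^+$, direct edges added at most once, and final extraction justified by Lemma~\ref{lem:sk-decomposition}. The only real difference is how unordered collections are counted. The paper uses the truncated exponential $\sum_{j\le L}\Psi_A^j/j!$ in the quotient ring $x_i^2=0$, $t^{L+1}=0$, $y_a^6=y_a^4$, where each collection appears in exactly $j!$ orders because the color sets are nonempty and disjoint; your canonical min-color insertion, best implemented as a loop over $c=1,\dots,2L$ that inserts at most one piece with minimal color $c$, avoids this division and, since each stage touches at most two degree coordinates, would even improve $36^{|A|}$ to roughly $6^{|A|}$.
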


This exact evaluation holds for any external field and any real edge weights. The restriction on the external field~$|h|\leq\rho_0N^{-1/4}$ is used in the truncation and mean-square error estimates. To compute these sums, we first sum the weights of individual path and cycle pieces. We combine pieces only if their vertices in~$A^c$ use no common color. We keep only unions satisfying~\eqref{eq:sk-degree-conditions}. 

\paragraph{Weights of paths and cycles.}
Here we compute the total weights of colorful path and cycle pieces that meet~$A^c$. We group the pieces by the colors of their vertices in~$A^c$ and the vertices they meet in~$A^+$. Recall from Section~\ref{subsec:sk-evaluation} that a \emph{direct edge} joins two distinct vertices of~$A^+$ and therefore has no vertex in~$A^c$. These sums, together with the weights of direct edges, will be used in the next part to compute~$f_A^\chi(k,S)$ by combining pieces.

Give each auxiliary edge~$\{0,i\}$ weight~$\omega_{0i}=\omega_{i0}=\hh=\tanh h$. The weight of a path or cycle is the product of its edge weights. For~$a\in A^+$, $j\in A^c$, and~$S\subseteq[2L]$, define~$P_a(S,j)$ to be the sum of weights of paths from~$a$ to~$j$ whose vertices after~$a$ lie in~$A^c$ and have exactly the colors in~$S$, each once. The preceding example with two colors is the entry~$P_a(\{1,2\},j)$.

The initial entries are~$P_a(\{\chi(j)\},j)=\omega_{aj}$. Entries with~$S=\varnothing$ or~$\chi(j)\notin S$ are zero. For~$|S|\geq2$ and~$\chi(j)\in S$, compute
\begin{equation}\label{eq:sk-path-recurrence}
P_a(S,j)=\sum_{\substack{i\in A^c\\\chi(i)\in S\setminus\{\chi(j)\}}}
P_a(S\setminus\{\chi(j)\},i)\omega_{ij}.
\end{equation}
Deleting the last vertex~$j$ of a path removes its color and the edge from its penultimate vertex~$i$. Conversely, appending~$j$ to a path using~$S\setminus\{\chi(j)\}$ cannot repeat a vertex. This proves the recurrence by induction on~$|S|$. For each fixed starting vertex~$a\in A^+$, the values~$P_a(S,j)$ form a table indexed by color sets~$S\subseteq[2L]$ and endpoints~$j\in A^c$. This table has~$2^{2L}|A^c|\leq N2^{2L}$ entries.

The table~$P_a(S,j)$ only counts paths starting at~$a\in A^+$. To construct cycles contained entirely in~$A^c$, we also need paths whose starting vertex~$r$ lies in~$A^c$. We therefore introduce a second path table. For~$r,j\in A^c$, let~$P_r^\circ(S,j)$ sum the weights of paths from~$r$ to~$j$ contained in~$A^c$, with vertex colors exactly~$S$, each once. Thus~$S$ includes~$\chi(r)$. Initialize~$P_r^\circ(\{\chi(r)\},r)=1$, corresponding to the path with no edges. All other singleton entries are zero. For~$|S|\geq2$, $\chi(r)\in S$, and~$\chi(j)\in S\setminus\{\chi(r)\}$, compute
\begin{equation}\label{eq:sk-cycle-recurrence}
P_r^\circ(S,j)=\sum_{\substack{i\in A^c\\\chi(i)\in S\setminus\{\chi(j)\}}}
P_r^\circ(S\setminus\{\chi(j)\},i)\omega_{ij}.
\end{equation}
All other entries are zero. Deleting or appending the last vertex gives the same induction as for~\eqref{eq:sk-path-recurrence}. Both tables are computed in increasing order of~$|S|$.

Both tables sum weights of paths ending in~$A^c$. Lemma~\ref{lem:sk-decomposition} requires path pieces to have two distinct endpoints in~$A^+$. Its cycle pieces meet~$A^+$ in at most one vertex. To obtain a path piece from a path counted by~$P_a(S,j)$, we append the edge~$\{j,a'\}$ with~$a'\in A^+\setminus\{a\}$. To obtain a cycle, we add~$\{j,a\}$ to a path counted by~$P_a(S,j)$, or~$\{j,r\}$ to a path counted by~$P_r^\circ(S,j)$. The size restrictions below ensure that each resulting cycle has at least three edges. For~$S\subseteq[2L]$, define the following sums.
\begin{itemize}
\item For~$a<a'$ in~$A^+$, let~$p_{aa'}(S)$ be the total weight of colorful paths between~$a$ and~$a'$ with nonempty interior in~$A^c$ and interior color set~$S$.
\item For~$a\in A^+$, let~$q_a(S)$ be the total weight of colorful cycles meeting~$A^+$ only at~$a$, with all other vertices in~$A^c$ and their colors forming~$S$.
\item Let~$\zeta(S)$ be the total weight of colorful cycles contained in~$A^c$ and having outside color set~$S$.
\end{itemize}
Multiplying by the added edge weights, summing the table entries, and correcting for repeated counts gives
\begin{align}
p_{aa'}(S)&=\sum_{j\in A^c}P_a(S,j)\omega_{ja'},&&a<a'\text{ in }A^+,\ |S|\geq1,\label{eq:sk-path-close}\\
q_a(S)&=\frac12\sum_{j\in A^c}P_a(S,j)\omega_{ja},&&a\in A^+,\ |S|\geq2,\label{eq:sk-return-close}\\
\zeta(S)&=\frac1{2|S|}\sum_{\substack{r\in A^c\\\chi(r)\in S}}\sum_{j\in A^c}P_r^\circ(S,j)\omega_{jr},&&|S|\geq3.\label{eq:sk-cycle-close}
\end{align}
Set these sums to zero outside the stated size ranges. Starting at the smaller endpoint counts each undirected path once. A cycle through~$a$ is counted in two orientations, while a cycle contained in~$A^c$ is counted in two orientations from each of its~$|S|$ roots. The denominators remove these multiplicities. The size restrictions exclude traversing a single edge twice.

\paragraph{Combining the pieces.}
The preceding part gives weight sums for individual path and cycle pieces, whereas our target~$f_A^\chi(k,S)$ sums weights of whole graphs. The second part of Lemma~\ref{lem:sk-decomposition} specifies which collections of pieces form the required augmented graphs. For each such collection, the product of its piece weights equals the weight of the corresponding original graph.

For example, if~$a\in A$ and~$S\cap T=\varnothing$, then~$q_a(S)q_a(T)$ sums the weights of unions of two cycles whose only common vertex is~$a$, as in Figure~\ref{fig:sk-color-tables}(a). In general, we must exclude shared vertices in~$A^c$ and unions with more than~$L$ original edges. We must also check the final degrees and count each graph once. We proceed in three steps.
\begin{enumerate}
\item Combine the path and cycle weight sums using disjoint outside color sets, and discard collections with more than~$L$ original edges.
\item Correct for the different orders in which the same pieces can be selected. Then add direct edges within~$A^+$, selecting each at most once.
\item After all pieces and direct edges have been chosen, keep only collections satisfying the degree conditions~\eqref{eq:sk-degree-conditions} in Lemma~\ref{lem:sk-decomposition}.
\end{enumerate}

We implement this construction using polynomial multiplication. Introduce formal variables~$t$, $x_i$ for~$i\in[2L]$, and~$y_a$ for~$a\in A^+$, and write~$x_S=\prod_{i\in S}x_i$. A colorful piece~$Q$ with~$k$ original edges and outside color set~$S$ contributes the weighted monomial
\begin{equation*}
\left(\prod_{e\in Q}\omega_e\right)t^kx_S\prod_{a\in A^+}y_a^{\deg_Q(a)}\,.
\end{equation*}
Its coefficient is the piece's weight. The exponents record the original edge count, outside colors, and degrees at~$A^+$. These degrees include auxiliary edges.

Let~$\Psi_A$ sum these monomials over colorful pieces containing a vertex in~$A^c$. We construct it directly from the previously computed sums:
\begin{equation}\label{eq:sk-piece-polynomial}
\begin{array}{@{}p{0.40\linewidth}ll@{}}
Piece & \text{Contribution to }\Psi_A & \text{Range}\\ \hline
Cycle in~$A^c$ & \zeta(S)t^{|S|}x_S & |S|\geq3\\
Path between~$a<a'$ in~$A$ & p_{aa'}(S)t^{|S|+1}x_Sy_ay_{a'} & |S|\geq1\\
Cycle through~$a\in A$, with all other vertices in~$A^c$ & q_a(S)t^{|S|+1}x_Sy_a^2 & |S|\geq2\\
Path from~$0$ to~$a\in A$ & p_{0a}(S)t^{|S|}x_Sy_0y_a & |S|\geq1\\
Cycle through~$0$, with all other vertices in~$A^c$ & q_0(S)t^{|S|-1}x_Sy_0^2 & |S|\geq2
\end{array}
\end{equation}
Sum these contributions over the indicated endpoints and color sets~$S\subseteq[2L]$. The powers of~$t$ count only original edges; in particular, a cycle through~$0$ has two auxiliary edges and hence~$|S|-1$ original edges. Multiplying piece monomials multiplies their weights and adds their exponents. To exclude repeated outside colors and more than~$L$ original edges, we impose
\begin{equation}\label{eq:sk-array-relations}
x_i^2=0\quad(i\in[2L]),\qquad t^{L+1}=0\,.
\end{equation}
The first relation excludes pieces sharing an outside vertex. It also excludes distinct outside vertices with the same color, because the desired graph must be colorful, see Figure~\ref{fig:sk-color-tables} for an example. The second relation enforces the original edge cutoff.

\begin{figure}[!htbp]
\centering
\includegraphics[width=\linewidth]{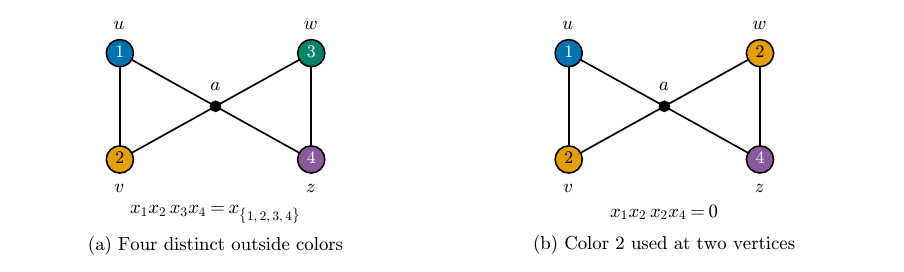}
\caption{The color test when combining two cycle pieces. Both cases show the same graph, with $A=\{a\}$ and $L\geq6$. Vertex names are outside the circles, and the numbers inside are their colors. The isolated auxiliary vertex $0$ is omitted. In (a), all four outside colors are distinct, so the two cycles can be combined. In (b), the distinct vertices $v$ and $w$ both have color $2$. Each cycle is colorful by itself, but their product contains $x_2^2$ and is discarded. The graph itself is valid. Section~\ref{subsec:sk-estimator} corrects this coloring restriction by reweighting and averaging.}
\label{fig:sk-color-tables}
\end{figure}

The expansion of~$\Psi_A^j$ gives ordered choices of~$j$ pieces. Since every piece uses a nonempty outside color set, the color restriction prevents repeated selection of a piece. Each permitted collection therefore appears in exactly~$j!$ orders. We count it once by defining
\begin{equation*}
E_A=\sum_{j=0}^{L}\frac{\Psi_A^j}{j!}\,.
\end{equation*}
Every piece contains at least one original edge, so collections with at most~$L$ original edges contain at most~$L$ pieces. So far, the exponent of~$y_a$ records the exact degree at~$a$. We now replace these exponents by degree states to reduce the number of coefficients stored.

At a vertex of~$A$, the final augmented degree must be even and at least four. Once the current degree is at least four, further additions cannot lower it below four, so only parity matters for the final check. We keep degrees~$0,1,2,3$ separately to determine whether subsequent additions reach the required threshold. For a union~$H$ of selected pieces and direct edges, define the \emph{degree state}
\begin{equation*}
d_a=\degstate\bigl(\deg_H(a)\bigr),\qquad
\degstate(d)=\begin{cases}
d,&0\leq d\leq3,\\
4,&d\geq4\text{ and }d\text{ is even},\\
5,&d\geq5\text{ and }d\text{ is odd},
\end{cases}
\end{equation*}
for nonnegative integers~$d$.

From now on, the factor~$y_a^{d_a}$ records this degree state. After each multiplication, replace~$y_a^d$ by~$y_a^{\degstate(d)}$, leaving the coefficient unchanged, and add coefficients of terms with the same monomial. Subsequent updates can use the stored states because
\begin{equation*}
\degstate\!\left(\degstate(d)+\degstate(d')\right)=\degstate(d+d')\,.
\end{equation*}
Thus each degree exponent has only six possible values. Algebraically, this replacement is implemented by simply setting
\begin{equation}\label{eq:sk-degree-reduction}
y_a^6=y_a^4\qquad(a\in A^+)\,.
\end{equation}

The polynomial~$E_A$ does not yet include direct edges, whose endpoints both lie in~$A^+$. Such edges use no outside colors, so the relations~$x_i^2=0$ cannot prevent their repeated selection. Consider an original direct edge~$\{a,a'\}\subseteq A$. To account for collections both with and without this edge, let
\begin{equation*}
E_A+t\omega_{aa'}y_ay_{a'}E_A=(1+t\omega_{aa'}y_ay_{a'})E_A\,.
\end{equation*}
The first summand keeps the collections unchanged. In the second, every collection gains the edge~$\{a,a'\}$: its weight is multiplied by~$\omega_{aa'}$, its original edge count increases by one, and the two endpoint exponents increase by one before degree reduction. Apply this operation to the current polynomial once for each original direct edge. For an auxiliary edge~$\{0,a\}$, use~$1+\omega_{0a}y_0y_a$ instead, in the way, adding it does not increase the original edge count. Processing each edge once prevents repeated selection. The resulting polynomial is
\begin{equation}\label{eq:sk-assembled-polynomial}
F_A=\prod_{\substack{a,a'\in A\\a<a'}}(1+t\omega_{aa'}y_ay_{a'})
\prod_{a\in A}(1+\omega_{0a}y_0y_a)E_A\,.
\end{equation}
Apply the color, edge-count, and degree reductions after every multiplication.

To select graphs whose branching set is exactly~$A$, Lemma~\ref{lem:sk-decomposition} requires the completed union~$H$ to have even degree at least four at every vertex of~$A$, and even degree at~$0$. Every used vertex in~$A^c$ already has degree two. We therefore select coefficients with
\begin{equation*}
d_a=4\quad(a\in A),\qquad d_0\in\{0,2,4\}\,.
\end{equation*}
State~$4$ includes degrees~$4,6,8,\ldots$; state~$5$ represents odd degrees and is excluded only at this final selection. These are augmented degrees, an original degree of five becomes six after augmentation and has state~$4$. For a reduced polynomial~$F$ and a monomial~$M$, let~$\operatorname{coeff}(F;M)$ denote the coefficient of~$M$ in~$F$. We claim that
\begin{equation}\label{eq:sk-accepting-coefficient}
f_A^\chi(k,S)=\sum_{d_0\in\{0,2,4\}}
\operatorname{coeff}\!\left(F_A;\,t^kx_Sy_0^{d_0}\prod_{a\in A}y_a^4\right)\,.
\end{equation}
The extracted coefficients sum the weights with the specified edge count, color set, and degree states. 

For computation, store each polynomial by its coefficients, indexed by~$0\leq k\leq L$, $S\subseteq[2L]$, and~$d_a\in\{0,\ldots,5\}$ for~$a\in A^+$. Writing~$b=|A|$, this requires
\begin{equation}\label{eq:sk-array-size}
K_{b,L}=(L+1)2^{2L}6^{b+1}
\end{equation}
entries. Compute~$Y_0=1$ and~$Y_j=Y_{j-1}\Psi_A/j$ for~$1\leq j\leq L$, and the sum~$E_A=\sum_{j=0}^{L}Y_j$, and then perform the direct-edge updates. Each multiplication of two coefficients computes all pairwise products of the weights they represent.

\begin{proof}[Proof of Proposition~\ref{prop:sk-colorful}]
 For a piece~$Q$, write
\begin{equation*}
m(Q)=|Q\cap\mathcal E_{N,2}|,\qquad
C_\chi(Q)=\{\chi(v):v\in V(Q)\cap A^c\}.
\end{equation*}
Let~$\mathcal Q\subseteq\mathscr P_A\cup\mathscr C_A$ consist of the colorful pieces~$Q$ that contain at least one vertex in~$A^c$ and satisfy~$m(Q)\leq L$. The path recurrences and formulas~\eqref{eq:sk-path-close}--\eqref{eq:sk-cycle-close} count each such piece once, so
\begin{equation*}
\Psi_A=\sum_{Q\in\mathcal Q}
\left(\prod_{e\in Q}\omega_e\right)
t^{m(Q)}x_{C_\chi(Q)}\prod_{a\in A^+}y_a^{\deg_Q(a)}.
\end{equation*}
This expansion verifies the coefficients computed from the path tables. The algorithm does not enumerate~$\mathcal Q$.

Expanding~$\Psi_A^j$ gives ordered choices of~$j$ pieces. The relations~$x_i^2=0$ and~$t^{L+1}=0$ discard choices with a repeated color on vertices in~$A^c$ or more than~$L$ original edges. The remaining pieces have nonempty, pairwise disjoint color sets and are therefore distinct. Each collection appears in exactly~$j!$ orders, so division by~$j!$ counts it once. Each piece has at least one original edge. Hence a collection with at most~$L$ original edges contains at most~$L$ pieces and occurs in~$E_A$.

The direct edge factors select a subset~$\mathcal D\subseteq\binom{A^+}{2}$. Together with a retained collection~$\mathcal C\subseteq\mathcal Q$, they give
\begin{equation*}
H=\mathcal D\cup\bigcup_{Q\in\mathcal C}Q.
\end{equation*}
Distinct pieces have disjoint vertex sets in~$A^c$, and each of their edges has at least one endpoint in~$A^c$. They therefore share no edges with one another or with~$\mathcal D$, so~$H$ is simple and degrees add. If~$|H\cap\mathcal E_{N,2}|\leq L$, its contribution is
\begin{equation*}
\left(\prod_{e\in H}\omega_e\right)
t^{|H\cap\mathcal E_{N,2}|}
x_{\chi(V(H)\cap A^c)}
\prod_{a\in A^+}y_a^{\degstate(\deg_H(a))}\,.
\end{equation*}
Otherwise it vanishes.

The coefficient selection in~\eqref{eq:sk-accepting-coefficient} requires even degree at~$0$ and even degree at least four at every~$a\in A$. Every used vertex in~$A^c$ has degree two, so~$H$ satisfies~\eqref{eq:sk-degree-conditions}. The second part of Lemma~\ref{lem:sk-decomposition} then gives
\begin{equation*}
\Gamma=H\cap\mathcal E_{N,2},\qquad H=\Gamma^+,\qquad
\Br(\Gamma)=A,\qquad \prod_{e\in H}\omega_e=W_h(\Gamma).
\end{equation*}
A vertex in~$A^c$ loses at most one edge when~$0$ is deleted, so~$V(H)\cap A^c=V(\Gamma)\setminus A$. Each contribution selected by~$t^kx_S$ therefore gives a graph~$\Gamma\in\mathcal G_A^\chi(k,S)$ with weight~$W_h(\Gamma)$. Conversely, each~$\Gamma\in\mathcal G_A^\chi(k,S)$ has the unique decomposition of Lemma~\ref{lem:sk-decomposition}. At most~$k\leq L$ pieces in this decomposition contain at least one vertex in~$A^c$. The contribution of~$\Gamma$ satisfies the color, edge count, and degree restrictions and is counted once. This proves~\eqref{eq:sk-accepting-coefficient}, including the empty graph when~$A=\varnothing$, $k=0$, and~$S=\varnothing$. The argument counts contributions before summing weights, so it also covers zero and negative weights.

For the operation bound, let~$b=|A|$ and~$\tilde N=N+2L+1$. Store pair weights in an array and color sets as~$2L$ binary flags. There are~$b+1$ tables~$P_a$ and~$N-b$ tables~$P_r^\circ$, hence~$N+1$ tables in total. Each has at most~$N4^L$ entries, and each recurrence has at most~$N$ summands. Computing all tables therefore requires evaluating at most~$(N+1)N^2 4^L$ summands. Including color tests, set generation, indexing, and the sums in~\eqref{eq:sk-path-close}--\eqref{eq:sk-cycle-close} costs~$O(\tilde N^6 4^L)$.

Each coefficient array multiplication examines at most
\(
K_{b,L}^2=36(L+1)^2\,16^L\,36^b
\)
pairs of entries, using~$O(\tilde N^2)$ operations per pair for color tests, degree reduction, and indexing. Computing~$E_A$ and multiplying the direct edge factors requires at most~$L+(b+1)^2$ such multiplications. Since~$L+1\leq\tilde N$ and~$L+(b+1)^2\leq2\tilde N^2$, the total cost is
\begin{equation*}
O\!\left((L+(b+1)^2)\tilde N^2 K_{b,L}^2\right)
=O\!\left(\tilde N^6\,16^L\,36^b\right).
\end{equation*}
Input preparation, initialization, scalar operations, and coefficient extraction fit the same bound, proving~\eqref{eq:sk-colorful-cost}.
\end{proof}

\subsection{Unbiased estimation and cost}\label{subsec:sk-estimator}

Proposition~\ref{prop:sk-colorful} computes the colorful part exactly. A plain average over random colorings would multiply each graph's weight by its probability of being colorful. We correct this bias by dividing each included contribution by its inclusion probability. We then average independent trials to reduce the error.

Fix integers~$0\leq D\leq N$ and~$L,R\geq1$, where~$R$ is the number of trials for each candidate branching set. We use~$\E_J$, $\E_\chi$, and~$\E_{J,\chi}$ for disorder, color, and joint expectations, respectively. For a fixed candidate~$A$, draw~$\chi:A^c\to[2L]$ with independent uniform colors. A fixed set of~$0\leq m\leq2L$ outside vertices receives distinct colors with probability
\begin{equation}\label{eq:color-inclusion}
\pi_{m,2L}=\frac{(2L)_m}{(2L)^m},\qquad
(2L)_m=(2L)(2L-1)\cdots(2L-m+1),\qquad \pi_{0,2L}=1.
\end{equation}
Every graph with at most~$L$ edges has at most~$2L$ outside vertices, so these probabilities are positive. For fixed~$J$, consider a retained graph with branching set~$A$ and~$m$ outside vertices. Its coefficient is one when the graph is colorful and zero otherwise, with mean~$\pi_{m,2L}$. Dividing that coefficient by~$\pi_{m,2L}$ makes its mean one. This corrects each signed graph weight, without requiring it to be positive.

The correction is computable from the stored indices. Every graph in~$\mathcal G_A^\chi(k,S)$ has exactly~$|S|$ used vertices in~$A^c$, so all these graphs have the same inclusion probability~$\pi_{|S|,2L}$. We divide the whole entry by this probability, without identifying its individual graphs. Summing the corrected entries gives, for one trial and fixed~$J$,
\begin{equation*}
\E_\chi\!\left[\sum_{k=0}^{L}\sum_{S\subseteq[2L]}
\frac{f_A^\chi(k,S)}{\pi_{|S|,2L}}\right]
=\sum_{\substack{\Gamma\subseteq\mathcal E_N\\\Br(\Gamma)=A,\ |\Gamma|\leq L}}W_h(\Gamma).
\end{equation*}
The right side is the contribution of the candidate~$A$ to~$T^{\mathrm{SK}}_{N,D,L}$. For each~$A\subseteq[N]$ with~$|A|\leq D$ and each trial~$n\in\{1,\ldots,R\}$, let~$\chi_{A,n}:A^c\to[2L]$ assign independent uniform colors to the vertices. Take these colorings independently of one another and of~$J$. Define the \emph{randomized estimate} of~$T^{\mathrm{SK}}_{N,D,L}$ by
\begin{equation}\label{eq:sk-estimator-definition}
\widetilde T^{\mathrm{SK}}_{N,D,L,R}
:=\sum_{\substack{A\subseteq[N]\\|A|\leq D}}
\frac1R\sum_{n=1}^{R}\sum_{k=0}^{L}\sum_{S\subseteq[2L]}
\frac{f_A^{\chi_{A,n}}(k,S)}{\pi_{|S|,2L}}\,.
\end{equation}
Thus we average~$R$ corrected color sums for each candidate~$A$, then add the results over all candidates. Algorithm~\ref{alg:sk-sum} computes this quantity.

\begin{algorithm}[!htbp]
\caption{\textsc{SKSum}$(N,h,\omega,D,L,R)$}\label{alg:sk-sum}
\begin{algorithmic}[1]
\Require Cached pair weights~$(\omega_{ij})$, external field~$h$, integers~$0\leq D\leq N$ and~$L,R\geq1$.
\State Compute~$\hh\gets\tanh h$, set~$\omega_{0i}=\omega_{i0}\gets\hh$ for~$i\in[N]$, compute~$\pi_{m,2L}$ for~$0\leq m\leq2L$, and initialize~$T\gets0$.
\For{each $A\subseteq[N]$ with $|A|\leq D$, generated by cardinality}
  \State $A^+\gets A\cup\{0\}$ and~$T_A\gets0$.
  \For{$n=1,\ldots,R$}
    \State Draw~$\chi:A^c\to[2L]$ with independent uniform colors, independently of~$J$ and all previous colorings.
    \State Initialize the path tables and coefficient arrays to zero for this trial.
    \State Compute~$P_a,P_r^\circ$ in increasing~$|S|$, using their initial states and~\eqref{eq:sk-path-recurrence}--\eqref{eq:sk-cycle-recurrence}.
    \State Compute~$p_{aa'},q_a,\zeta$ by~\eqref{eq:sk-path-close}--\eqref{eq:sk-cycle-close}, then form~$\Psi_A$ by~\eqref{eq:sk-piece-polynomial}.
    \State $Y\gets1$ and~$E\gets1$.
    \For{$j=1,\ldots,L$}
      \State $Y\gets Y\Psi_A/j$ and then~$E\gets E+Y$, using~\eqref{eq:sk-array-relations} and~\eqref{eq:sk-degree-reduction}.
    \EndFor
    \State Form~$F_A$ from~$E$ using the factors for direct edges in~\eqref{eq:sk-assembled-polynomial}.
    \State Extract~$f_A^\chi(k,S)$ by~\eqref{eq:sk-accepting-coefficient} and add~$\sum_{k=0}^L\sum_{S\subseteq[2L]}\pi_{|S|,2L}^{-1}f_A^\chi(k,S)$ to~$T_A$.
  \EndFor
  \State $T\gets T+T_A/R$.
\EndFor
\State \Return $T$.
\end{algorithmic}
\end{algorithm}

\begin{proposition}\label{prop:sk-estimator}
For integers~$0\leq D\leq N$ and~$L,R\geq1$, the estimate defined in~\eqref{eq:sk-estimator-definition} and computed by Algorithm~\ref{alg:sk-sum} satisfies
\begin{equation}\label{eq:sk-unbiased}
\E_\chi[\widetilde T^{\mathrm{SK}}_{N,D,L,R}\mid J]=T^{\mathrm{SK}}_{N,D,L}
\end{equation}
for every fixed realization of~$J$. Under~\eqref{eq:disorder},
\begin{equation}\label{eq:sk-evaluation-error}
\E_{J,\chi}|\widetilde T^{\mathrm{SK}}_{N,D,L,R}-T^{\mathrm{SK}}_{N,D,L}|^2\leq\frac{C_+e^{2L}}{R}.
\end{equation}
If, in addition, \(D<\lfloor\gamma N\rfloor\), then 
\begin{equation}\label{eq:sk-complete-error}
\E|\widetilde T^{\mathrm{SK}}_{N,D,L,R}-\widehat Z_N|^2
\leq C_+\theta^{2(L+1)}+2C_{\mathrm{SK}}e^{-\Phi_N(D+1)}+C_+\theta^{3\gamma N}+\frac{C_+e^{2L}}R.
\end{equation}
For an absolute constant~$C_{\mathrm{alg}}$, the operation count on every finite input and every color outcome is at most
\begin{equation}\label{eq:sk-evaluation-cost}
C_{\mathrm{alg}}R(N+2L+1)^6 16^L\sum_{b=0}^{D}\binom Nb36^b.
\end{equation}
\end{proposition}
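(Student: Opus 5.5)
The plan has four parts, taken in the order of the statement. Unbiasedness~\eqref{eq:sk-unbiased} comes first and follows from linearity. For fixed~$J$, Proposition~\ref{prop:sk-colorful} expresses the inner sum for each~$(A,n)$ as a sum over retained graphs~$\Gamma$ with~$\Br(\Gamma)=A$ and~$|\Gamma|\leq L$. Each such graph carries the coefficient~$\ind\{\Gamma\text{ colorful under }\chi_{A,n}\}/\pi_{m(\Gamma),2L}$, where~$m(\Gamma)=|V(\Gamma)\setminus A|\leq 2L$. Under a uniform coloring this indicator has mean~$\pi_{m,2L}$ by~\eqref{eq:color-inclusion}. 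Averaging over~$n$ and summing over~$|A|\leq D$ then recovers~$T^{\mathrm{SK}}_{N,D,L}$.

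For the variance bound~\eqref{eq:sk-evaluation-error}, I would write the error as~$\sum_\Gamma c_\Gamma W_h(\Gamma)$, summed over retained graphs. The coefficient is
\[
c_\Gamma=\frac1R\sum_{n=1}^R\Bigl(\frac{\ind\{\Gamma\text{ colorful under }\chi_{\Br(\Gamma),n}\}}{\pi_{m,2L}}-1\Bigr),
\]
and this array is independent of~$J$. Lemma~\ref{lem:orthogonality}, in the form~\eqref{eq:random-orthogonality}, then gives
\[
\E_{J,\chi}|\cdot|^2=\sum_\Gamma\E[c_\Gamma^2]\,\hh^{2|\partial\Gamma|}\prod_{e\in\Gamma}r_{N,2}.
\]
The trials are i.i.d.\ with mean zero, so~$\E c_\Gamma^2=(1/\pi_{m,2L}-1)/R\leq 1/(R\pi_{m,2L})$. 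Next I would bound~$1/\pi_{m,2L}=(2L)^m/(2L)_m\leq (2L)^{2L}/(2L)!\leq e^{2L}$, using Stirling or~$n!\geq(n/e)^n$. Finally,~$r_{N,2}\leq\theta^2\bar r_{N,2}\leq\bar r_{N,2}$ together with~\eqref{eq:total-mass} bounds the remaining sum by~$C_+$. The step I expect to need the most care is this one. The coefficients for different~$\Gamma$ with the same~$A$ are dependent, since they come from the same colorings. Lemma~\ref{lem:orthogonality} explicitly permits such dependence, so the cross terms still vanish through the disorder alone. The only other point to check is that the coefficients are square-integrable; they are bounded.

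For~\eqref{eq:sk-complete-error}, I would expand
\[
\widetilde T-\widehat Z_N=(\widetilde T-T^{\mathrm{SK}}_{N,D,L})+(T^{\mathrm{SK}}_{N,D,L}-\widehat Z_N).
\]
The first term has the form~$\sum_{\Gamma\in\mathcal R}c_\Gamma W_h(\Gamma)$, and the second is~$-\sum_{\Gamma\notin\mathcal R}W_h(\Gamma)$. These involve disjoint graph families, so applying~\eqref{eq:random-orthogonality} to the combined coefficient array kills the cross term. The squared error is therefore exactly the sum of the two mean-square errors. Proposition~\ref{prop:sk-truncation} bounds the second one, which gives the stated bound with no factor~$2$.

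For the cost~\eqref{eq:sk-evaluation-cost}, each of the~$R$ trials for a candidate~$A$ with~$|A|=b$ costs at most~$C_{\mathrm{alg}}(N+2L+1)^616^L36^b$ by~\eqref{eq:sk-colorful-cost}. The additional work per trial is small by comparison. Drawing the coloring takes~$O(N)$ operations, and dividing the~$O(4^LL)$ entries by the precomputed~$\pi$ values also fits within that bound. Generating subsets by cardinality costs~$O(N^2)$ per set. Summing over the~$\binom Nb$ sets of each size~$b\leq D$ and enlarging~$C_{\mathrm{alg}}$ gives the bound, uniformly over inputs and color outcomes.
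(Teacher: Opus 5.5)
Your proposal is correct and follows the paper's proof. It uses the same per-graph coefficient with variance $(1-\pi_{m,2L})/(R\pi_{m,2L})\le e^{2L}/R$, the random-coefficient form of Lemma~\ref{lem:orthogonality} to remove cross terms despite the dependence within a candidate set, and the per-trial cost from Proposition~\ref{prop:sk-colorful}. The only cosmetic difference is in \eqref{eq:sk-complete-error}: you remove the cross term by applying \eqref{eq:random-orthogonality} to the combined coefficient array on the disjoint retained and discarded families, whereas the paper conditions on $J$ and uses \eqref{eq:sk-unbiased}; both steps are immediate.
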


\begin{proof}
The probability correction gives unbiasedness for fixed~$J$, but rare inclusions can have large corrected coefficients. We first bound the variance of each coefficient after~$R$ trials. We then use disorder orthogonality to control the error of the complete signed sum.
Fix a retained graph~$\Gamma$ and let~$m=|V(\Gamma)\setminus\Br(\Gamma)|$. Let~$\ind_n(\Gamma)$ indicate that its outside colors are distinct in trial~$n$ for~$A=\Br(\Gamma)$. This is the only candidate set at which~$\Gamma$ can appear, and its coefficient in the output is
\begin{equation*}
c_\Gamma=\frac1R\sum_{n=1}^R\frac{\ind_n(\Gamma)}{\pi_{m,2L}}.
\end{equation*}
The indicators are independent across trials and each has mean~$\pi_{m,2L}$. The bound~$n!\geq(n/e)^n$ gives~$\pi_{m,2L}\geq\pi_{2L,2L}=(2L)!/(2L)^{2L}\geq e^{-2L}$. Therefore
\begin{equation}\label{eq:sk-coefficient-variance}
\E_\chi c_\Gamma=1,\qquad
\E_\chi(c_\Gamma-1)^2=\frac{1-\pi_{m,2L}}{R\pi_{m,2L}}\leq\frac{e^{2L}}{R}.
\end{equation}
Summing the first identity against the fixed graph weights proves~\eqref{eq:sk-unbiased}. To estimate the error, first write the difference of the two graph sums as
\begin{equation*}
\widetilde T^{\mathrm{SK}}_{N,D,L,R}-T^{\mathrm{SK}}_{N,D,L}
=\sum_{\substack{\Gamma\subseteq\mathcal E_N\\|\Br(\Gamma)|\leq D,\ |\Gamma|\leq L}}
(c_\Gamma-1)W_h(\Gamma).
\end{equation*}
Graphs with the same candidate branching set are tested using the same random coloring, so their coefficients can be dependent. Their covariance terms need not vanish when~$J$ is held fixed. Instead, average also over~$J$. The full array~$(c_\Gamma-1)$ is independent of~$J$, and Lemma~\ref{lem:orthogonality} removes the cross terms between different graph weights. Applying it to this sum gives
\begin{align*}
\E_{J,\chi}|\widetilde T^{\mathrm{SK}}_{N,D,L,R}-T^{\mathrm{SK}}_{N,D,L}|^2
&=\sum_{\substack{|\Br(\Gamma)|\leq D\\|\Gamma|\leq L}}
\E_\chi(c_\Gamma-1)^2\E_J W_h(\Gamma)^2\\
&\leq\frac{e^{2L}}R\sum_{\Gamma\subseteq\mathcal E_N}\E_J W_h(\Gamma)^2
\leq\frac{C_+e^{2L}}R,
\end{align*}
where the last step uses~\eqref{eq:total-mass} and~$r_{N,2}\leq\bar r_{N,2}$.

To obtain the total approximation error, separate the error in evaluating the retained sum from the error of truncating the expansion. Given~$J$, the first has color mean zero by~\eqref{eq:sk-unbiased}, while the second is fixed. The cross term therefore vanishes, giving
\begin{align*}
\E_{J,\chi}|\widetilde T^{\mathrm{SK}}_{N,D,L,R}-\widehat Z_N|^2
=\E_{J,\chi}|\widetilde T^{\mathrm{SK}}_{N,D,L,R}-T^{\mathrm{SK}}_{N,D,L}|^2
+\E_J|T^{\mathrm{SK}}_{N,D,L}-\widehat Z_N|^2.
\end{align*}
Combining Propositions~\ref{prop:sk-truncation} and~\ref{prop:sk-estimator} gives \eqref{eq:sk-complete-error}, where the first three terms are truncation errors, while the last term is the error from random coloring.

For the operation count, fix a candidate set~$A$ of size~$b$ and one trial. Proposition~\ref{prop:sk-colorful} bounds the computation for a fixed coloring by~$O((N+2L+1)^6 16^L36^b)$. Computing inclusion probabilities, drawing colors, and accumulating the corrected entries fit the same bound. The candidate sets can be generated in~$O((N+1)^2\sum_{b\leq D}\binom Nb)$ operations as in Proposition~\ref{prop:pure-evaluation}. Summing the bound for one trial over these sets and the~$R$ trials, one obtains~\eqref{eq:sk-evaluation-cost}. 
\end{proof}

\section{Proof of the main results}\label{sec:error-complexity}

This section, by specifying the relevant constants, turns the truncation estimates into mean square approximation for $\hat{Z}_N$, combining with  a lower tail bound for $\hat{Z}_N$, we prove Theorem~\ref{thm:finite-size}.

\subsection{Choosing the cutoffs}\label{subsec:cutoff-selection}

Given a requested mean-square error~$u\in(0,1)$, we use~\eqref{eq:higher-truncation} and~\eqref{eq:sk-complete-error} to choose parameters that make the corresponding error bound at most~$u$. We also need the evaluation cost to be polynomial in~$N$ and~$u^{-1}$. Let~$k$ denote the cutoff on used vertices for higher-order mixtures and on branching vertices for SK, where we take~$D=k$. The number of edges~$L$ and colorings~$R$ in SK case will be chosen afterwards in Proposition~\ref{prop:mean-square}.

The dependence on~$k$ in these error bounds is through~$e^{-\Phi_N(k+1)}$. Recall
\begin{equation}\label{eq:algorithm-support-scale}
\Phi_N(t)=\kappa t\log\frac{N}{C_\star t}\quad(t>0),
\qquad \Phi_N(0)=0,
\end{equation}
with the constants from Section~\ref{subsec:higher-support} for higher-order mixtures and Section~\ref{subsec:sk-support} for SK. For higher-order mixtures, $\kappa$ may depend on~$N,h$, but~$\kappa\geq\kappa_{\min}>0$. For SK, $\kappa=\kappa_{\min}=1$.

It suffices to bound~$e^{-\Phi_N(k+1)}$, the powers of~$\theta$ in~\eqref{eq:higher-truncation} and~\eqref{eq:sk-complete-error}, and, for SK, $e^{2L}/R$ by~$u/C_\Lambda$, where
\begin{equation}\label{eq:algorithm-cutoff-constants}
C_\Lambda=\begin{cases}
8(2+C_+),&p_m\geq3,\\
8(2C_{\mathrm{SK}}+3C_+),&\text{SK},
\end{cases}
\end{equation}
the two mean-square bounds are then at most~$(2+C_+)u/C_\Lambda=u/8$ and~$(2C_{\mathrm{SK}}+3C_+)u/C_\Lambda=u/8$, respectively. We first choose the cutoff \(k\) so that \(e^{-\Phi_N(k+1)}\le u/C_\Lambda\).


\begin{lemma}[Choosing the vertex cutoff]\label{lem:cutoff-selection}
There exist constants~$\gamma\in(0,1/4)$, $\nu>0$, and an integer~$N_0\geq P$, depending only on the fixed model and~$\beta_0,\rho_0$ such that the following holds. For every~$N\geq N_0$ and~$C_\Lambda e^{-\nu N}<u<1$, one can choose an integer~$0\leq k<\lfloor\gamma N\rfloor$ satisfying the truncation requirement~$e^{-\Phi_N(k+1)}\leq u/C_\Lambda$ and the enumeration bound
\begin{equation}\label{eq:cutoff-cost}
\sum_{j=0}^{k}\binom Nj t^j\leq(N+1)\left(\frac{C_\Lambda}{u}\right)^{C_t}
\end{equation}
for every fixed~$t\geq1$. Here~$C_t>0$ depends only on the fixed model, $\beta_0,\rho_0$, and~$t$.
\end{lemma}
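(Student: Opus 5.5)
We choose $\gamma$ explicitly, then pick $k$ as the smallest integer that makes the truncation term small enough, and bound the enumeration sum by comparing $k\log(N/k)$ with $\Phi_N(k+1)$.

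\textbf{Constants.} First fix $\gamma\in(0,1/4)$ with $\kappa_{\min}(\log(1/(C_\star\gamma))-1)\geq2$. For example, $\gamma=\min\{1/8,\;e^{-1-2/\kappa_{\min}}/C_\star\}$ works for both model classes, using the respective $C_\star$ and $\kappa_{\min}$. Next set
\[
\nu=\tfrac12\kappa_{\min}\gamma\log\frac1{C_\star\gamma}\geq\kappa_{\min}\gamma,
\]
and choose $N_0$ so that $\lfloor\gamma N\rfloor\geq2$ and $\gamma N-1\geq\gamma N/2$ for $N\geq N_0$.

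\textbf{Choice of $k$.} Let $k$ be the smallest integer $j\geq0$ with $\Phi_N(j+1)\geq\log(C_\Lambda/u)$. On $(0,\gamma N]$ we have $\Phi_N'\geq2>0$, so $\Phi_N$ is increasing there. To show $k<\lfloor\gamma N\rfloor$, it suffices that $\Phi_N(\lfloor\gamma N\rfloor)\geq\log(C_\Lambda/u)$, since then $j=\lfloor\gamma N\rfloor-1$ qualifies. Because $t\log(N/(C_\star t))$ is increasing on this range,
\[
\Phi_N(\lfloor\gamma N\rfloor)\geq\kappa_{\min}\frac{\gamma N}{2}\log\frac{1}{C_\star\gamma}\geq\nu N>\log(C_\Lambda/u),
\]
where the last step uses the hypothesis $u>C_\Lambda e^{-\nu N}$. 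This gives the truncation requirement directly.

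\textbf{Enumeration bound.} Write $\Lambda=\log(C_\Lambda/u)>0$. Since $C_\Lambda\geq8$, we have $\Lambda\geq\log 8$. If $k=0$, the sum equals $1$ and the bound is trivial. If $k\geq1$, minimality gives $\Phi_N(k)<\Lambda$, that is,
\[
\kappa k\log\frac{N}{C_\star k}<\Lambda .
\]
Because $k\leq\gamma N$, we have $\log(N/(C_\star k))\geq\log(1/(C_\star\gamma))\geq1+2/\kappa_{\min}$. Next,
\[
\sum_{j\leq k}\binom Nj t^j\leq(k+1)\max_{j\leq k}\binom Nj t^j\leq(N+1)\Bigl(\frac{eNt}{k}\Bigr)^k ,
\]
where the last step uses that $j\mapsto(eNt/j)^j$ is increasing for $j\leq N$. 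Now
\[
k\log\frac{eNt}{k}=k\log\frac{N}{C_\star k}+k\log(eC_\star t).
\]
The first term is below $\Lambda/\kappa_{\min}$. For the second, $k\leq\Lambda/(\kappa_{\min}\log(N/(C_\star k)))\leq\Lambda/\kappa_{\min}$. Hence the total is at most $C_t\Lambda$ with
\[
C_t=\kappa_{\min}^{-1}\bigl(1+\log(eC_\star t)\bigr)_+ ,
\]
which gives $(C_\Lambda/u)^{C_t}$ as claimed.

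\textbf{Main obstacle.} The delicate point is the uniformity of these bounds. For higher-order mixtures, $\kappa$ depends on $N$ and $h$, so every estimate must use only $\kappa_{\min}\leq\kappa$. The comparison of $\Phi_N(\lfloor\gamma N\rfloor)$ with $\nu N$ also requires $N\geq N_0$ to absorb the floor.
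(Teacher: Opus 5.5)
Your proof is correct and follows the paper's argument essentially step for step. You use the same $\gamma$, take $k$ as the least integer with $\Phi_N(k+1)\geq\Lambda$, and use minimality ($\Phi_N(k)<\Lambda$) together with $k\log(etN/k)=\Phi_N(k)/\kappa+k\log(etC_\star)$ for the enumeration bound. Your only departure there is a slightly cruder $C_t$, obtained by bounding $k\leq\Lambda/\kappa_{\min}$ via $\log(N/(C_\star k))\geq1$ instead of dividing by $\log(1/(C_\star\gamma))$.

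The paper's $\nu$ also includes $c_1$ and a factor $\tfrac14$, so its proof additionally yields $e^{-c_1N}\leq e^{-4\Lambda}$, which is used later in Proposition~\ref{prop:mean-square}. That bound is not part of the lemma as stated, so your simpler $\nu$ suffices here. You should, however, add $P$ to the definition of $N_0$ so that $N_0\geq P$ holds.
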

\begin{proof}
Choose
\begin{equation*}
\gamma=\min\left\{\frac18,\frac{e^{-(1+2/\kappa_{\min})}}{C_\star}\right\}
\end{equation*}
and~$N_0=\lceil\max\{P,4,2/\gamma\}\rceil$. For~$0<x\leq\gamma N$, we have~$\Phi_N'(x)=\kappa(\log(N/(C_\star x))-1)\geq\kappa_{\min}(\log(1/(C_\star\gamma))-1)\geq2$. Thus~$\Phi_N$ is increasing on~$(0,\gamma N]$. For~$N\geq N_0$, the inequality~$\lfloor\gamma N\rfloor\geq\gamma N/2$ gives
\begin{equation*}
\Phi_N(\lfloor\gamma N\rfloor)
\geq\Phi_N(\gamma N/2)
\geq\frac{\kappa_{\min}\gamma N}{2}\log\frac{2}{C_\star\gamma}.
\end{equation*}

The terms~$C_+\theta^{2\gamma N/P}$ and~$C_+\theta^{3\gamma N}$ in~\eqref{eq:higher-truncation} and~\eqref{eq:sk-complete-error}, respectively, equal~$C_+e^{-c_1N}$, where
\begin{equation*}
c_1=\begin{cases}
(\gamma/P)\log(1/\theta^2),&p_m\geq3,\\
(3\gamma/2)\log(1/\theta^2),&\text{SK}.
\end{cases}
\end{equation*}
Set
\begin{equation}\label{eq:algorithm-remainder-scale}
\nu=\frac14\min\left\{1,\ c_1,\ \frac{\kappa_{\min}\gamma}{2}\log\frac{2}{C_\star\gamma}\right\}.
\end{equation}
Let~$N\geq N_0$ and~$C_\Lambda e^{-\nu N}<u<1$. With~$\Lambda=\log(C_\Lambda/u)$, we have~$0<\Lambda<\nu N$ and~$\Phi_N(\lfloor\gamma N\rfloor)\geq4\nu N>\Lambda$. Consequently, there is an integer~$0\leq k<\lfloor\gamma N\rfloor$ satisfying~$\Phi_N(k+1)\geq\Lambda$. Choose the least such integer. Then~$e^{-\Phi_N(k+1)}\leq e^{-\Lambda}=u/C_\Lambda$. Minimality gives~$\Phi_N(k)<\Lambda$ for~$k\geq1$. The same inequality holds for~$k=0$, since~$\Phi_N(0)=0$. The choice of~$\nu$ also gives~$c_1N\geq4\nu N>4\Lambda$, so
\begin{equation}\label{eq:cutoff-error}
e^{-c_1N}\leq e^{-4\Lambda}.
\end{equation}

Fix~$t\geq1$. If~$k=0$, the sum in~\eqref{eq:cutoff-cost} equals one, so the claimed bound holds. Suppose~$k\geq1$. The bound~$\binom Nj\leq(eN/j)^j$ for~$j\geq1$ and the monotonicity of~$x\log(etN/x)$ on~$(0,N]$ give
\begin{equation*}
\sum_{j=0}^{k}\binom Nj t^j\leq(k+1)\exp\left(k\log\frac{etN}{k}\right).
\end{equation*}
Since~$k<\gamma N$ and~$\kappa\geq\kappa_{\min}$, we have~$\Phi_N(k)\geq\kappa_{\min}k\log(1/(C_\star\gamma))>0$. Define~$C_t=\kappa_{\min}^{-1}\bigl(1+\log(etC_\star)/\log(1/(C_\star\gamma))\bigr)$. Using this lower bound on~$\Phi_N(k)$ and~$C_\star\geq1$, we obtain
\begin{equation*}
k\log\frac{etN}{k}
=\frac{\Phi_N(k)}{\kappa}+k\log(etC_\star)
\leq C_t\Phi_N(k)<C_t\Lambda.
\end{equation*}
The enumeration sum is therefore at most~$(N+1)e^{C_t\Lambda}=(N+1)(C_\Lambda/u)^{C_t}$, proving~\eqref{eq:cutoff-cost}. The cutoff can be found by testing at most~$\lfloor\gamma N\rfloor$ successive integers, using~$O(N)$ operations.
\end{proof}

Fix the constants~$\gamma,\nu,N_0$ constructed in the proof, and write~$\Lambda=\log(C_\Lambda/u)$ for the rest of this subsection. For higher-order mixtures, we evaluate the retained sum exactly. For SK, we must also control the edge error~$C_+\theta^{2(L+1)}$ and the coloring error~$C_+e^{2L}/R$. Algorithm 4 specifies edge cutoff \(L\) and coloring trial cutoff\(R\). The algorithm uses these truncations when~$N\geq N_0$ and~$\Lambda<\nu N$. Otherwise it evaluates the sum exactly. The following Proposition verifies the resulting error and cost bounds.

\begin{algorithm}[!htbp]
\caption{\textsc{GraphApprox}$(N,\beta,h,J,u)$}\label{alg:graph-approx}
\begin{algorithmic}[1]
\Require Finite disorder array~$J$, $N\geq P$, parameters satisfying~\eqref{eq:field-range}, and~$u\in(0,1)$.
\State Compute~$\omega_e=\tanh x_e$ and set~$\Lambda\gets\log(C_\Lambda/u)$.
\If{$N<N_0$ or $\Lambda\geq\nu N$}
  \State \Return the exact spin average~$G([N])$ in~\eqref{eq:local-spin-average}.
\EndIf
\State Compute~$\kappa$ by~\eqref{eq:field-dependent-kappa} for higher-order mixtures, or set~$\kappa\gets1$ for SK.
\State Find the smallest~$0\leq k<\lfloor\gamma N\rfloor$ satisfying~$\Phi_N(k+1)\geq\Lambda$.
\If{$p_m\geq3$}
  \State \Return $\textsc{SupportSum}(N,h,\omega,k)$.
\Else
  \State $D\gets k$, $L\gets\lceil\Lambda/\log(1/\theta^2)\rceil$, and~$R\gets\lceil e^{2L+\Lambda}\rceil$.
  \State \Return $\textsc{SKSum}(N,h,\omega,D,L,R)$.
\EndIf
\end{algorithmic}
\end{algorithm}

\begin{proposition}\label{prop:mean-square}
Suppose the disorder variables are independent and have a common symmetric law with unit variance, as in~\eqref{eq:disorder}. For every~$N\geq P$, specified~$\beta\in[0,\beta_0]$, external field~$|h|\leq\rho_0N^{-1/4}$, and~$u\in(0,1)$, Algorithm~\ref{alg:graph-approx} returns~$T$ satisfying
\begin{equation}\label{eq:mean-square-guarantee}
\E|T-\widehat Z_N|^2\leq u.
\end{equation}
There is a constant~$C>0$, depending only on the fixed model and~$\beta_0,\rho_0$, such that the algorithm uses at most~$CN^Cu^{-C}$ operations on every finite input and every random outcome. It is deterministic for higher-order mixtures and randomized for SK.
\end{proposition}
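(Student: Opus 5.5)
The proof splits according to the branch taken in Algorithm~\ref{alg:graph-approx}.

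\emph{Exact branch.} If $N<N_0$ or $\Lambda\geq\nu N$, the algorithm returns $G([N])=\widehat Z_N$ exactly, so the error is zero. The cost is $2^N$ configurations times $C_P(N+1)^{P+1}$ operations each. For $N<N_0$ this is a constant. When $\Lambda\geq\nu N$ we have $e^{\nu N}\leq C_\Lambda/u$. Hence $2^N=e^{N\log 2}\leq (C_\Lambda/u)^{\log2/\nu}$, which is polynomial in $u^{-1}$.

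\emph{Truncated branch, $p_m\geq3$.} Otherwise $N\geq N_0$ and $C_\Lambda e^{-\nu N}<u$, so Lemma~\ref{lem:cutoff-selection} gives a $k<\lfloor\gamma N\rfloor$ with $e^{-\Phi_N(k+1)}\leq u/C_\Lambda$. We also need the $\gamma$ constructed there to satisfy the geometric-decay condition used in Proposition~\ref{prop:higher-truncation}, that is, $\kappa_{\min}(\log(1/(C_\star\gamma))-1)\geq2$. This holds by the choice of $\gamma$.

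By Proposition~\ref{prop:pure-evaluation}, SupportSum returns $T_{N,k}$ exactly. Proposition~\ref{prop:higher-truncation} then gives
\[
\E|T_{N,k}-\widehat Z_N|^2\leq 2u/C_\Lambda+C_+e^{-c_1N}.
\]
By \eqref{eq:cutoff-error}, $e^{-c_1N}\leq e^{-4\Lambda}\leq u/C_\Lambda$. So the total is at most $(2+C_+)u/C_\Lambda=u/8\leq u$.

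For the cost, combine \eqref{eq:pure-evaluation-cost} with \eqref{eq:cutoff-cost} at $t=2$. This gives $C_P(N+1)^{P+3}(C_\Lambda/u)^{C_2}$. Computing the $\omega_e$ and $\kappa$, and searching for $k$, add only $O(N^P)$.

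\emph{Truncated branch, SK.} Set $D=k$, $L=\lceil\Lambda/\log(1/\theta^2)\rceil$ and $R=\lceil e^{2L+\Lambda}\rceil$. The four terms of \eqref{eq:sk-complete-error} are then bounded as follows:
\begin{itemize}
\item $\theta^{2(L+1)}\leq\theta^{2L}\leq e^{-\Lambda}=u/C_\Lambda$;
\item $2C_{\mathrm{SK}}e^{-\Phi_N(k+1)}\leq 2C_{\mathrm{SK}}u/C_\Lambda$;
\item $\theta^{3\gamma N}=e^{-c_1N}\leq u/C_\Lambda$;
\item $e^{2L}/R\leq e^{-\Lambda}=u/C_\Lambda$.
\end{itemize}
The sum is at most $(2C_{\mathrm{SK}}+3C_+)u/C_\Lambda=u/8$. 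For SK, the decay condition of Proposition~\ref{prop:sk-truncation} on $\gamma$ again follows from the choice in Lemma~\ref{lem:cutoff-selection} with $\kappa_{\min}=1$.

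The main obstacle is the cost bound \eqref{eq:sk-evaluation-cost}, which has factors exponential in $L$. Since $L\leq 1+\Lambda/\log(1/\theta^2)$:
\begin{itemize}
\item $16^L$ and $R\leq 1+e^{2L+\Lambda}$ are at most $C(C_\Lambda/u)^{C'}$, with exponents depending on $\theta$, hence on $\beta_0$;
\item $(N+2L+1)^6$ is polynomial, because $\Lambda<\nu N$ gives $L=O(N)$;
\item $\sum_{b\leq D}\binom Nb36^b$ is bounded by \eqref{eq:cutoff-cost} with $t=36$.
\end{itemize}
Multiplying these gives $CN^Cu^{-C}$ after enlarging $C$.

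Determinism for $p_m\geq3$ and randomization for SK are read off directly from the algorithms. All cost bounds hold for every finite input and every random outcome, since none of them uses $J$.
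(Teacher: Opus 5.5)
Your proposal is correct and follows essentially the same route as the paper's proof. It uses the same three cases: exact enumeration with $2^N\le (C_\Lambda/u)^{\log 2/\nu}$; exact \textsc{SupportSum} evaluation combined with Proposition~\ref{prop:higher-truncation} and \eqref{eq:cutoff-error}; and the SK estimator with each term of \eqref{eq:sk-complete-error} bounded by a multiple of $u/C_\Lambda$. The cost bookkeeping is also the same, via Lemma~\ref{lem:cutoff-selection} at $t=2$ and $t=36$.

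The one point the paper states that you leave implicit is that the cutoffs $k,L,R$ depend only on $N,h,u$ and fixed constants, not on $J$. This is what allows the fixed-cutoff, orthogonality-based truncation estimates to be applied directly.
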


\begin{proof}
Recall that \(\nu,N_0\) are the fixed constants constructed in Lemma~\ref{lem:cutoff-selection} and \(\Lambda=\log(C_\Lambda/u)\). First consider the case in which the above algorithm uses truncation, namely~$N\geq N_0$ and~$\Lambda<\nu N$. The latter condition is equivalent to~$u>C_\Lambda e^{-\nu N}$. The algorithm uses the cutoff constructed in the proof of Lemma~\ref{lem:cutoff-selection}. The cutoffs depend only on~$N,h,u$ and the fixed parameters, so the earlier mean-square estimates apply with cutoffs independent of the disorder.

For higher-order mixtures, Proposition~\ref{prop:pure-evaluation} evaluates the retained sum exactly. Proposition~\ref{prop:higher-truncation}, Lemma~\ref{lem:cutoff-selection}, and~\eqref{eq:cutoff-error} give
\begin{align*}
\E|T_{N,k}-\widehat Z_N|^2
\leq2e^{-\Lambda}+C_+e^{-4\Lambda}\leq(2+C_+)e^{-\Lambda}=\frac u8.
\end{align*}
The cost bound~\eqref{eq:pure-evaluation-cost} and Lemma~\ref{lem:cutoff-selection} with~$t=2$ give at most~$C_P(N+1)^{P+3}e^{C_2\Lambda}$ operations.

For SK, the choices of~$L$ and~$R$ ensure~$\theta^{2(L+1)}\leq e^{-\Lambda}$ and~$e^{2L}/R\leq e^{-\Lambda}$. The combined error estimate~\eqref{eq:sk-complete-error}, Lemma~\ref{lem:cutoff-selection}, and~\eqref{eq:cutoff-error} therefore give
\begin{align*}
\E\left|\widetilde T^{\mathrm{SK}}_{N,k,L,R}-\widehat Z_N\right|^2
&\leq(2C_{\mathrm{SK}}+2C_+)e^{-\Lambda}+C_+e^{-4\Lambda}\\
&\leq(2C_{\mathrm{SK}}+3C_+)e^{-\Lambda}=\frac u8.
\end{align*}
By~\eqref{eq:sk-evaluation-cost} and Lemma~\ref{lem:cutoff-selection} with~$t=36$, the cost is at most
\begin{equation*}
C_{\mathrm{alg}}R(N+2L+1)^6\,16^L(N+1)e^{C_{36}\Lambda}.
\end{equation*}
Since~$L\leq1+\Lambda/\log(1/\theta^2)$ and~$\Lambda<\nu N$, we have~$N+2L+1\leq C(N+1)$. Also, $R\leq2e^{2L+\Lambda}$ gives
\begin{align*}
R16^Le^{C_{36}\Lambda}
\leq2\exp\left((2+\log16)L+(1+C_{36})\Lambda\right)
\leq Ce^{C'\Lambda}
\end{align*}
for fixed constants~$C,C'>0$. The SK cost is consequently at most~$C(N+1)^7e^{C'\Lambda}$. In both models, the identity~$e^\Lambda=C_\Lambda/u$ converts these estimates into polynomial bounds in~$N$ and~$u^{-1}$.

It remains to consider exact enumeration. Here~$G([N])=\widehat Z_N$, so the error is zero. Its cost is at most~$C_P(N+1)^{P+1}2^N$. If~$N<N_0$, then~$2^N$ is bounded by a fixed constant. Otherwise~$\Lambda\geq\nu N$, and
\begin{equation}\label{eq:exact-enumeration-cost}
2^N\leq e^{(\log2/\nu)\Lambda}
=\left(\frac{C_\Lambda}{u}\right)^{\log2/\nu}.
\end{equation}
Thus exact enumeration also satisfies the claimed polynomial bound. Computing the input weights and searching for~$k$ add only polynomial work in~$N$. Computing the SK integer ceilings by comparisons and increments costs~$O(L+R)$, which is covered by the preceding estimates. 
\end{proof}

\subsection{Lower tail bound for $\hat{Z}_N$}\label{subsec:lower-tail}

To obtain logarithmic accuracy, we need to control the probability that~$\widehat Z_N$ is close to zero. On a suitable event determined by the coupling magnitudes, we will express~$\widehat Z_N$ as a noise average of a positive function of the independent coupling signs, then use reverse hypercontractivity to derive the lower tail bound.

We first recall the reverse hypercontractive inequality of Mossel,
Oleszkiewicz, and Sen~[27]. Let $(\Omega_i,\mu_i)$,
$1\le i\le m$, be finite probability spaces, and equip
$\Omega=\prod_{i=1}^m\Omega_i$ with the product measure
$\mu=\bigotimes_{i=1}^m\mu_i$. For a function on $\Omega$, let
$\mathbb{E}_i$ denote averaging over its $i$th coordinate while keeping
the other coordinates fixed. Consider the operators
\[
T_t
=
\prod_{i=1}^{m}
\left[e^{-t}\mathrm{Id}+(1-e^{-t})\mathbb{E}_i\right],
\qquad t\ge0.
\]
Thus for each $f:\Omega \to \mathbb{R}$, $T_t f(x)$ is obtained by independently retaining each coordinate
of $x$ with probability $e^{-t}$, and otherwise resampling that
coordinate from $\mu_i$, before evaluating $f$ and taking the
expectation. To be precise, 
\[
T_t f(x) = \sum_{y\in \Omega} f(y) \prod_{i=1}^m[e^{-t} 1_{y_i = x_i} + (1-e^{-t}) \mu_i(y_i)].
\]
These operators are precisely the tensor product of the simple
Markov semigroups considered in~\cite{MOS13}. For a strictly positive function $f$, denote
\(
\|f\|_r=\bigl(\mathbb{E}_\mu f^r\bigr)^{1/r}
\)
for $r \neq 0$ and $\|f\|_0=\exp(\mathbb{E}_\mu\log f)$. We restate the Corollary 1.11 of \cite{MOS13}.

\begin{lemma}[\cite{MOS13}*{Corollary 1.11}]\label{lem:MOS}
    For every strictly positive function $f$ on $\Omega$ and every pair of exponents
$q<p<1$,
\[
\|T_t f\|_q\ge\|f\|_p
\qquad\text{whenever}\qquad
t\ge\log\frac{1-q}{1-p}.
\]
\end{lemma}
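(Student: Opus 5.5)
The plan is the classical Gross-type argument, run in the reverse direction. First prove a reverse logarithmic Sobolev inequality for a single simple semigroup. Then tensorize it to the product semigroup. Finally integrate it along an exponent path $q(s)$ to get reverse hypercontractivity at the critical time. The case of larger $t$ then follows from a monotonicity remark.

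\textbf{Step 1: reduction to the critical time.} For any strictly positive $g$ and any $s\geq0$, we have $\|T_s g\|_p\geq\|g\|_p$ when $p<1$. Indeed, $T_s$ is a Markov operator that preserves $\mathbb E_\mu$. For $0<p<1$, concavity of $x\mapsto x^p$ gives $\mathbb E(T_sg)^p\geq\mathbb E T_s(g^p)=\mathbb E g^p$. For $p<0$ the convexity reverses, but so does the sign of $1/p$. For $p=0$, concavity of $\log$ gives the same conclusion. Set $t_0=\log\frac{1-q}{1-p}$ and write $T_t=T_{t_0}T_{t-t_0}$ for $t\geq t_0$. Then
\[
\|T_tf\|_q\geq\|T_{t-t_0}f\|_p\geq\|f\|_p,
\]
so it suffices to treat $t=t_0$. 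By continuity of $r\mapsto\|g\|_r$ for positive $g$ on a finite space, we may also assume $p,q\neq0$ and handle $0$ by a limit.

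\textbf{Step 2: differentiation along the exponent path.} Take $q(s)=1-(1-p)e^{s}$, which decreases from $p$ at $s=0$ to $q$ at $s=t_0$. Let $F(s)=\log\|T_sf\|_{q(s)}$ and $g_s=T_sf>0$. The generator is $\mathcal L=\sum_i(\mathbb E_i-\mathrm{Id})$. Differentiating in the standard way gives
\[
F'(s)=\frac{q'(s)}{q(s)^2}\frac{\operatorname{Ent}(g_s^{q})}{\mathbb E g_s^{q}}+\frac{\mathbb E\bigl[g_s^{q-1}\mathcal Lg_s\bigr]}{\mathbb E g_s^{q}},
\]
where $q=q(s)$. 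Since $q'=-(1-q)$, the requirement $F'\geq0$ is exactly a reverse log-Sobolev inequality with constant $1$ at every exponent $r<1$, $r\neq0$:
\[
\frac{1-r}{r^2}\operatorname{Ent}(g^r)\leq\mathcal E(g,g^{r-1})\quad\text{with sign conventions adjusted for }r<0,
\]
where $\mathcal E(g,h)=-\mathbb E[h\,\mathcal Lg]$. Once this inequality holds for all positive $g$, $F$ is nondecreasing. Hence $\|T_{t_0}f\|_q\geq\|f\|_p$.

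\textbf{Step 3: tensorization.} Entropy is subadditive over product measures, and the Dirichlet form is a sum of coordinate forms $\mathbb E\bigl[\mathcal E_i(g,g^{r-1})\bigr]$. Applying the one-coordinate inequality to $g$ with the other coordinates frozen and summing therefore gives the product inequality. This reduces everything to a single finite space $(\Omega_i,\mu_i)$ with generator $\mathbb E_i-\mathrm{Id}$.

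\textbf{Step 4: the one-coordinate inequality (main obstacle).} For the simple semigroup, $\mathcal E(g,g^{r-1})=\mathbb E g\,\mathbb E g^{r-1}-\mathbb E g^r$. Write $h=g^r$. The claim becomes
\[
\operatorname{Ent}(h)\leq\frac{r^2}{1-r}\bigl(\mathbb E h^{1/r}\,\mathbb E h^{(r-1)/r}-\mathbb E h\bigr),
\]
with the appropriate sign handling for $r<0$. I would first try to prove a pointwise or Jensen-type bound. One candidate uses $\operatorname{Ent}(h)\leq\mathbb E h\log h-\mathbb E h\log\mathbb E h$ together with the variational formula for entropy. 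The aim is to compare $\log$ with a combination of power functions, reducing the claim to a convexity statement in one real variable that holds for every measure. Another route uses the fact that $\mathbb E_i$ is a rank-one projection. This may allow reduction to two-point distributions via an extremality argument: for fixed marginal moments the inequality is affine in $\mu_i$, so it suffices to check measures with few atoms. The two-point case can then be verified by calculus. Handling the full range $r<1$, including negative $r$, uniformly with constant exactly $1$ is the delicate part, and this is where I expect the real work of \cite{MOS13} to lie.
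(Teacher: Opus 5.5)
You have a genuine gap at Step 4. The one-coordinate inequality
$\operatorname{Ent}(h)\leq\frac{r^2}{1-r}\bigl(\mathbb E h^{1/r}\,\mathbb E h^{(r-1)/r}-\mathbb E h\bigr)$ must hold for all $r<1$, $r\neq0$, with constant exactly one and uniformly over all finite $(\Omega_i,\mu_i)$. That inequality is the entire content of the lemma, and you only list routes you might try. Your Steps 1--3 are the standard architecture; the paper gives no proof here and simply imports the result from \cite{MOS13}, whose argument has this shape. The extremality route also does not work as described: fixing the three power moments leaves an affine problem whose extreme measures can have up to four atoms, not two.

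There are two smaller slips in Step 2. With your convention $\mathcal E(g,h)=-\mathbb E[h\,\mathcal Lg]$, the condition $F'\geq0$ reads $\frac{1-r}{r^2}\operatorname{Ent}(g^r)\leq-\mathcal E(g,g^{r-1})=\mathbb E[g^{r-1}\mathcal Lg]$. The inequality you display has the wrong sign and fails for every nonconstant $g$, since $\mathcal E(g,g^{r-1})\leq0$ when $r<1$. Your Step 4 formula for $\mathcal E$ carries the opposite sign error, so your Step 4 target is still correct, and no extra sign handling is needed for $r<0$. Also, when $q<0<p$ the path $q(s)$ crosses $0$ at an interior time, not only at an endpoint. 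This is harmless because $F$ is continuous there, but it needs to be said.

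The missing idea is to go through the modified log-Sobolev inequality. For the simple operator the Dirichlet form is a covariance, $\operatorname{Cov}(\phi,\psi)=\tfrac12\mathbb E\bigl[(\phi(x)-\phi(y))(\psi(x)-\psi(y))\bigr]$, with $x,y$ independent of law $\mu_i$. Jensen's inequality $\mathbb E\log h\leq\log\mathbb E h$ gives $\operatorname{Ent}(h)\leq\operatorname{Cov}(h,\log h)$. It then suffices to prove the two-point inequality
\[
(a-b)\log\frac ab\leq\frac{r^2}{r-1}\bigl(a^{1/r}-b^{1/r}\bigr)\bigl(a^{1-1/r}-b^{1-1/r}\bigr),\qquad a,b>0,
\]
and average it over $(x,y)$ with $a=h(x)$, $b=h(y)$. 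This yields $\operatorname{Cov}(h,\log h)\leq\frac{r^2}{r-1}\operatorname{Cov}(h^{1/r},h^{1-1/r})$, which is exactly your target.

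To prove the two-point inequality, note both sides are homogeneous of degree one. Put $a=ce^{w}$, $b=ce^{-w}$ with $w>0$, and set $\alpha=1/r$, $\beta=1-1/r$, $\phi(y)=\sinh(y)/y$. Using $\alpha\beta=(r-1)/r^2$, the inequality becomes $\phi(w)\leq\phi(\alpha w)\phi(\beta w)$. This holds because $\phi$ is even, $\phi\geq1$, and $\phi$ is increasing on $[0,\infty)$. Moreover, $\alpha+\beta=1$ with $\alpha\beta<0$ forces $\max\{|\alpha|,|\beta|\}\geq1$. This pointwise comparison of Dirichlet forms is the generalized Stroock--Varopoulos step of \cite{MOS13}. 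With it in place, your Steps 3, 2 and 1 complete the proof.
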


We apply this result to the uniform measure on the sign cube
$\Omega=\{-1,1\}^m$, and write $\mathbb{E}$ for expectation under
this measure. For $0<\vartheta<1$, define
\[
(\mathsf{N}_\vartheta f)(\epsilon)
=
2^{-m}\sum_{\eta\in\{-1,1\}^m}
f(\eta)\prod_{i=1}^{m}(1+\vartheta\epsilon_i\eta_i).
\]
This is precisely the preceding operator $T_t$ with
$e^{-t}=\vartheta$, equivalently, $\mathsf{N}_\vartheta=T_{\log(1/\vartheta)}$.


The action of $\mathsf{N}_\vartheta$ on sign monomials is particularly
simple. For $B\subseteq[m]$, let
$w_B(\epsilon)=\prod_{i\in B}\epsilon_i$, we have \(
\mathsf{N}_\vartheta w_B=\vartheta^{|B|}w_B
\) using the following identities
\[
\frac12\sum_{\eta_i=\pm1}
(1+\vartheta\epsilon_i\eta_i)=1, \qquad 
\frac12\sum_{\eta_i=\pm1}
\eta_i(1+\vartheta\epsilon_i\eta_i)
=
\vartheta\epsilon_i.
\]

We will use this identity to express the normalized partition function
as the noise average of a function with rescaled interaction weights. Before that, we state the following result as a direct application of the above
lemma. 

\begin{lemma}\label{lem:reverse-hypercontractivity}
Let~$0<\vartheta<1$ and~$s=\vartheta^{-1/2}-1$. Every function~$f:\Omega\to(0,\infty)$ with~$\E f=1$ satisfies
\begin{equation}\label{eq:reverse-hypercontractivity}
\E\bigl[(\mathsf N_\vartheta f)^{-s}\bigr]\leq\E f^2.
\end{equation}
\end{lemma}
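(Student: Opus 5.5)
We apply Lemma~\ref{lem:MOS} with $\mathsf N_\vartheta=T_t$, $e^{-t}=\vartheta$, choosing a negative exponent $q=-s$ and a positive exponent $p\in(0,1)$, and then bound $\|f\|_p$ from below. The target inequality $\E[(\mathsf N_\vartheta f)^{-s}]\le\E f^2$ will follow once we know that $\|\mathsf N_\vartheta f\|_{-s}\ge\|f\|_p$ and that $\|f\|_p^{-s}\le\E f^2$.

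\emph{Step 1: exponents.} Take $q=-s$ and $p=1/2$. The condition of Lemma~\ref{lem:MOS} reads $\log(1/\vartheta)\ge\log\bigl((1+s)/(1/2)\bigr)=\log(2(1+s))$. This requires $1+s\le 1/(2\vartheta)$, which is not the stated relation $1+s=\vartheta^{-1/2}$. So $p$ should be chosen depending on $\vartheta$. Put $p=1-\vartheta^{1/2}\in(0,1)$. Then $(1-q)/(1-p)=(1+s)/\vartheta^{1/2}=\vartheta^{-1}$, so $t=\log(1/\vartheta)$ meets the condition with equality. Hence $\|\mathsf N_\vartheta f\|_{-s}\ge\|f\|_{p}$.

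\emph{Step 2: convert norms.} Since $q=-s<0$, raising to the power $-s$ reverses the inequality:
\[
\E[(\mathsf N_\vartheta f)^{-s}]=\|\mathsf N_\vartheta f\|_{-s}^{-s}\le\|f\|_p^{-s}.
\]

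\emph{Step 3: lower bound on $\|f\|_p$.} We need $\|f\|_p^{-s}\le\E f^2$ using only $\E f=1$. Log-convexity of $r\mapsto\E f^r$ (H\"older) with $1=\lambda p+(1-\lambda)2$ gives $1=\E f\le(\E f^p)^{\lambda}(\E f^2)^{1-\lambda}$, where $\lambda=1/(2-p)$. Hence
\[
\E f^p\ge(\E f^2)^{-(1-\lambda)/\lambda}=(\E f^2)^{-(1-p)},
\]
and therefore $\|f\|_p^{-s}=(\E f^p)^{-s/p}\le(\E f^2)^{s(1-p)/p}$.

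It remains to check that the exponent $s(1-p)/p$ is at most $1$; since $\E f^2\ge(\E f)^2=1$, this suffices. Writing $a=\vartheta^{1/2}$, we have $s=(1-a)/a$ and $(1-p)/p=a/(1-a)$, so the product equals exactly $1$. This yields \eqref{eq:reverse-hypercontractivity}.

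The main obstacle is choosing $p$ so that the MOS time condition and the H\"older exponent line up. Both are tight at $p=1-\vartheta^{1/2}$, and positivity of $f$ on the finite cube justifies all the norm manipulations.
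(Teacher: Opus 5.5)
Your proposal is correct and matches the paper's proof: the same exponents $q=-s$ and $p=1-\sqrt{\vartheta}$, which make the condition of Lemma~\ref{lem:MOS} hold with equality at $t=\log(1/\vartheta)$, the same H\"older interpolation giving $\E f^p\geq(\E f^2)^{-(1-p)}$, and the same identity $s=p/(1-p)$ that makes the final exponent equal to one. The initial attempt with $p=1/2$ is only exploratory and can be dropped.
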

\begin{proof}
Set $q= -s = 1- \vartheta^{-1/2}$ and $p = 1- \sqrt{\vartheta}$, since $\mathsf N_\vartheta = T_{\log (1/\vartheta)}$, applying Lemma~\ref{lem:MOS} gives
\begin{equation*}
\E\bigl[(\mathsf N_\vartheta f)^{-s}\bigr]
\leq(\E f^p)^{-s/p}.
\end{equation*}
H\"older's inequality gives
\begin{equation*}
1=\E f
=\E\!\left[(f^p)^{1/(2-p)}(f^2)^{(1-p)/(2-p)}\right]
\leq(\E f^p)^{1/(2-p)}(\E f^2)^{(1-p)/(2-p)}.
\end{equation*}
Hence~$\E f^p\geq(\E f^2)^{-(1-p)}$, and 
\(
\E\bigl[(\mathsf N_\vartheta f)^{-s}\bigr]
\leq(\E f^2)^{s(1-p)/p}=\E f^2
\)
since $s = p/(1-p)$.
\end{proof}

For the normalized partition function $\hat{Z}_N$, the cube has one coordinate per interaction, so~$m=|\mathcal E_N|$. We will use~$\vartheta=\theta=\beta_0/\beta_+\in(0,1)$ from~\eqref{eq:algorithm-temperature-constants}. Define the exponent~$s$ in Algorithm~\ref{alg:log-approx} and the tail constant~$c$ in Theorem~\ref{thm:finite-size} by
\begin{equation}\label{eq:lower-tail-constants}
s=\theta^{-1/2}-1>0,\qquad c=\frac{\operatorname{arctanh}(\theta/2)}{\beta_0}.
\end{equation}
The event
\begin{equation}\label{eq:bounded-coupling-event}
\mathcal B_N=\bigcap_{p\in\mathcal P}
\left\{\max_{e\in\mathcal E_{N,p}}|a_pJ_e|\leq cN^{(p-1)/2}\right\}
\end{equation}
implies~$|x_e|\leq\beta_0c$ and hence~$|\omega_e|\leq\tanh(\beta_0c)=\theta/2$ for every edge. A union bound gives~$\pr(\mathcal B_N^c)\leq\tau_N$. This event will ensure positivity of the function to which we apply Lemma~\ref{lem:reverse-hypercontractivity}. It is used only in the proof, and the algorithm does not modify the disorder.

\begin{proposition}\label{prop:lower-tail}
For every~$N\geq P$, specified~$\beta\in[0,\beta_0]$, and~$|h|\leq\rho_0N^{-1/4}$, we have 
\begin{equation}\label{eq:lower-tail}
 \pr(\widehat Z_N<z)\leq C_+z^s+\tau_N, 
 \quad \text{for $z >0$}.
\end{equation}
where $s$ is defined above in~\eqref{eq:lower-tail-constants} and $\tau_N$ is the disorder tail bound.
\end{proposition}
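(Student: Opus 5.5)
Our plan is to condition on the coupling magnitudes and apply Lemma~\ref{lem:reverse-hypercontractivity} to the independent coupling signs. By symmetry of~$\mu$, write~$J_e=\epsilon_e|J_e|$ with~$\epsilon=(\epsilon_e)_{e\in\mathcal E_N}$ uniform on~$\{-1,1\}^m$, $m=|\mathcal E_N|$, and independent of the magnitudes~$(|J_e|)$. Then~$\omega_e=\epsilon_e|\omega_e|$, and by~\eqref{eq:algorithm-expansion}
\begin{equation*}
\widehat Z_N=\sum_{\Gamma}\hh^{|\partial\Gamma|}\prod_{e\in\Gamma}|\omega_e|\;w_\Gamma(\epsilon),
\end{equation*}
a multilinear polynomial in the signs. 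Set~$f(\epsilon)=\sum_\Gamma\hh^{|\partial\Gamma|}\prod_{e\in\Gamma}(|\omega_e|/\theta)\,w_\Gamma(\epsilon)=\E_h\prod_e(1+\theta^{-1}\omega_e\sigma_e)$. Since~$\mathsf N_\theta w_\Gamma=\theta^{|\Gamma|}w_\Gamma$, we get~$\mathsf N_\theta f=\widehat Z_N$ identically in~$\epsilon$, for fixed magnitudes.

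We then verify the hypotheses on the event~$\mathcal B_N$, which depends only on magnitudes. On~$\mathcal B_N$ we have~$|\omega_e|/\theta\leq1/2$, so every factor~$1+\theta^{-1}\omega_e\sigma_e\geq1/2>0$ and~$f>0$ for every sign vector. Also~$\E_\epsilon f=1$, since only~$\Gamma=\varnothing$ survives sign averaging. Lemma~\ref{lem:reverse-hypercontractivity} with~$\vartheta=\theta$ and~$s=\theta^{-1/2}-1$ gives, conditionally on the magnitudes,
\begin{equation*}
\E_\epsilon\bigl[\widehat Z_N^{-s}\bigr]\leq\E_\epsilon f^2
=\sum_\Gamma\hh^{2|\partial\Gamma|}\prod_{e\in\Gamma}\frac{\omega_e^2}{\theta^2},
\end{equation*}
using orthogonality of the sign monomials.

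Finally, Markov's inequality gives~$\pr(\widehat Z_N<z)\leq\pr(\mathcal B_N^c)+\E[\ind_{\mathcal B_N}z^{s}\widehat Z_N^{-s}]$. We bound the second term by~$z^s\,\E\bigl[\sum_\Gamma\hh^{2|\partial\Gamma|}\prod_{e\in\Gamma}\omega_e^2/\theta^2\bigr]$, dropping the indicator since all terms are nonnegative. Independence gives~$\E\omega_e^2=r_{N,|e|}\leq\theta^2\bar r_{N,|e|}$, so this expectation is at most~$\sum_\Gamma\hh^{2|\partial\Gamma|}\prod_e\bar r_{N,|e|}\leq C_+$ by~\eqref{eq:total-mass}. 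Together with~$\pr(\mathcal B_N^c)\leq\tau_N$, this yields~\eqref{eq:lower-tail}.

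The main point requiring care is the first step. We must check that the sign/magnitude decomposition is legitimate when~$J_e=0$ has positive mass; in that case we take an independent fair sign, which is harmless. We must also confirm that positivity of~$f$ on the \emph{whole} cube, rather than only at the realized~$\epsilon$, is what the lemma needs. This is exactly why the event~$\mathcal B_N$ bounds the magnitudes uniformly, with the constant~$c$ chosen so that~$|\omega_e|\leq\theta/2$.
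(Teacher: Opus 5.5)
Your proposal is correct and follows the paper's proof essentially step for step. The shared steps are the sign/magnitude decomposition, the inflated function $f$ with $\mathsf N_\theta f=\widehat Z_N$, positivity of $f$ on the whole cube via $\mathcal B_N$, Lemma~\ref{lem:reverse-hypercontractivity} with $\vartheta=\theta$, sign-monomial orthogonality for $\E_\epsilon f^2$, and Markov's inequality combined with $r_{N,p}\leq\theta^2\bar r_{N,p}$ and \eqref{eq:total-mass}; your remark about an atom of $\mu$ at zero only makes explicit what the paper leaves implicit in representing $J$ ``in law.''
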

\begin{proof}
By symmetry, represent the disorder in law as~$J_e=\epsilon_e|J_e|$, where the~$\epsilon_e$ are independent uniform signs, independent of the magnitudes, then~$\omega_e=\epsilon_e|\omega_e|$. Fix the magnitudes and let~$\E_\epsilon$ denote uniform expectation on~$\{-1,1\}^{\mathcal E_N}$. The event~$\mathcal B_N$ depends only on the magnitudes, so restricting to this event does not change the conditional sign law.

To construct a function whose noise average is~$\widehat Z_N$, we divide each edge weight by~$\theta$ to cancel the factor~$\theta^{|\Gamma|}$ produced by the noise. Define
\begin{equation}\label{eq:inflated-sign-function}
f(\epsilon)=\E_h\prod_e\left(1+\frac{|\omega_e|}{\theta}\epsilon_e\sigma_e\right)
=\sum_{\Gamma\subseteq\mathcal E_N}\hh^{|\partial\Gamma|}
\prod_{e\in\Gamma}\frac{|\omega_e|\epsilon_e}{\theta}.
\end{equation}
Here~$\E_h$ averages the spins with~$\epsilon$ fixed. On~$\mathcal B_N$, every factor $1+\frac{|\omega_e|}{\theta}\eps_e \sigma_e$ in the product is at least~$1/2$, so~$f>0$. Averaging over the signs leaves only the empty graph, giving~$\E_\epsilon f=1$. The monomial identity above yields
\begin{equation*}
(\mathsf N_\theta f)(\epsilon)
=\sum_{\Gamma\subseteq\mathcal E_N}\hh^{|\partial\Gamma|}
\prod_{e\in\Gamma}|\omega_e|\epsilon_e
=\widehat Z_N(\epsilon).
\end{equation*}
Lemma~\ref{lem:reverse-hypercontractivity} therefore gives, on~$\mathcal B_N$,
\(
\E_\epsilon\widehat Z_N^{-s}\leq\E_\epsilon f^2.
\)
Orthogonality of the sign monomials gives, for every magnitude array,
\begin{equation}\label{eq:conditional-second-moment}
\E_\epsilon f^2=\sum_{\Gamma\subseteq\mathcal E_N}
\hh^{2|\partial\Gamma|}\prod_{e\in\Gamma}\frac{|\omega_e|^2}{\theta^2}.
\end{equation}
Averaging over the magnitudes, whose expectation we denote by~$\E_{|J|}$, and using~$\E_\epsilon f^2\geq0$, we obtain
\begin{align*}
\E[\widehat Z_N^{-s}\ind_{\mathcal B_N}]
=\E_{|J|}\!\left[\ind_{\mathcal B_N}\E_\epsilon\widehat Z_N^{-s}\right]
\leq\E_{|J|}[\ind_{\mathcal B_N}\E_\epsilon f^2]
\leq\E_{|J|}\E_\epsilon f^2
=\sum_{\Gamma\subseteq\mathcal E_N}\hh^{2|\partial\Gamma|}
\prod_{e\in\Gamma}\frac{r_{N,|e|}}{\theta^2}
\leq C_+.
\end{align*}
For the last inequality, $r_{N,p}/\theta^2\leq\bar r_{N,p}$, and Proposition~\ref{prop:mass} bounds the sum with these comparison weights. Markov's inequality on~$\mathcal B_N$ and~$\pr(\mathcal B_N^c)\leq\tau_N$ give
\begin{equation*}
\pr(\widehat Z_N<z)
\leq\pr(\mathcal B_N^c)+z^s\E[\widehat Z_N^{-s}\ind_{\mathcal B_N}]
\leq\tau_N+C_+z^s.
\end{equation*}
\end{proof}

\subsection{Proof of the main theorem}\label{subsec:main-proofs}

It remains to combine the mean-square and lower-tail estimates with Algorithm~\ref{alg:log-approx}. 

\begin{proof}[Proof of Theorem~\ref{thm:finite-size}]
Let~$z,u$ be as in Algorithm~\ref{alg:log-approx}, and let~$T$ be its output from~\textsc{GraphApprox}. Since $0<z<1$ and~$0<u<1/8$, so Proposition~\ref{prop:mean-square} applies. We first identify an event on which~$T$ is positive and its logarithm has the required accuracy. On the event
\begin{equation*}
\widehat Z_N\geq z,\qquad |T-\widehat Z_N|\leq\varepsilon z/2,
\end{equation*}
we have~$T\geq z(1-\varepsilon/2)>0$ and~$|T/\widehat Z_N-1|\leq\varepsilon/2$. The bound~$|\log(1+x)|\leq2|x|$ for~$|x|\leq1/2$ gives~$|\log T-\log\widehat Z_N|\leq\varepsilon$. The other factors in~$Z_N$ are computed exactly, so the output has the same error for~$\log Z_N$.

We now bound the probability that this event fails. Proposition~\ref{prop:lower-tail} controls small values of~$\widehat Z_N$, and Markov's inequality controls the absolute error of~$T$. In Algorithm~\ref{alg:log-approx}, we have~$C_+z^s=\delta/2$ and~$4u/(\varepsilon^2z^2)=\delta/2$. Consequently,
\begin{align*}
\pr(|\widehat L_N-\log Z_N|>\varepsilon)
&\leq\pr(\widehat Z_N<z)+\pr(|T-\widehat Z_N|>\varepsilon z/2)\\
&\leq\tau_N+C_+z^s+\frac{4\E|T-\widehat Z_N|^2}{\varepsilon^2z^2}
\leq\tau_N+\delta/2+\delta/2.
\end{align*}
 The fallback when~$T\leq0$ occurs only outside the event above and is covered by this bound. To conclude, substitute~$u^{-1}=8(2C_+)^{2/s}\varepsilon^{-2}\delta^{-(1+2/s)}$ into the cost bound of Proposition~\ref{prop:mean-square}. This gives~$CN^C\varepsilon^{-C}\delta^{-C}$. Computing~$\log\overline Z_N$ and the other explicit terms adds only polynomial work in~$N$. All constants have the stated dependence.
\end{proof}

\bibliography{references}

\end{document}